\newtheorem{theorem}{Theorem}[section]
\newtheorem{corollary}[theorem]{Corollary}
\newtheorem{proposition}[theorem]{Proposition}
\newtheorem{lemma}[theorem]{Lemma}
\newtheorem{conjecture}[theorem]{Conjecture}
\theoremstyle{definition}
\newtheorem{definition}[theorem]{Definition}
\newtheorem{remark}[theorem]{Remark}
\newtheorem*{acknowledgements}{Acknowledgements}
\theoremstyle{property}
\DeclareFontFamily{OT1}{rsfs}{}
\DeclareFontShape{OT1}{rsfs}{n}{it}{<-> rsfs10}{}
\DeclareMathAlphabet{\curly}{OT1}{rsfs}{n}{it}
\newcommand\A{\mathcal A}
\newcommand\sfT{\mathsf T}
\newcommand\sfD{\mathsf D}
\newcommand\SW{\mathrm{SW}}
\newcommand\Eu{\mathrm{Eu}}
\renewcommand\O{\mathcal O}
\newcommand\PP{\mathbb P}
\newcommand\cA{\mathcal A}
\newcommand\E{\mathbb E}
\newcommand\Coeff{\mathrm{Coeff}}
\newcommand\vir{\mathrm{vir}}
\newcommand\odd{\mathrm{odd}}
\newcommand\td{\mathrm{td}}
\newcommand\bslambda{\boldsymbol \lambda}
\newcommand\bsmu{\boldsymbol \mu}
\newcommand\C{\mathbb C}
\newcommand\FF{\mathbb F}
\newcommand\sfZ{\mathsf Z}
\newcommand\sfA{\mathsf A}
\newcommand\Q{\mathbb Q}
\newcommand\Z{\mathbb Z}
\newcommand\cZ{\mathcal Z}
\newcommand\s{\mathfrak s}
\newcommand\I{\mathcal I}
\newcommand\INTO{\ar@{^{(}->}[r]}
\newcommand\rk{\operatorname{rk}}
\newcommand\ch{\operatorname{ch}}
\newcommand\vd{\operatorname{vd}}
\renewcommand\hom{\mathcal{H}{\it{om}}}
\newcommand\Hom{\operatorname{Hom}}
\newcommand\Ext{\operatorname{Ext}}
\newcommand\ext{\curly Ext}
\newcommand\Ob{\operatorname{Ob}}
\newcommand\Bl{\operatorname{Bl}}
\newcommand\Pic{\operatorname{Pic}}
\newcommand\Spec{\operatorname{Spec}\,}
\newcommand\Hilb{\operatorname{Hilb}}
\newcommand\beq[1]{\begin{equation}\label{#1}}
\newcommand\eeq{\end{equation}}
\newcommand\beqa{\begin{eqnarray*}}
\newcommand\eeqa{\end{eqnarray*}}
\newcommand\<{\langle}
\renewcommand\>{\rangle}
\begin{document}
\title[Virtual refinements of the Vafa-Witten formula]{Virtual refinements of the Vafa-Witten formula}
\author[L.~G\"ottsche and M.~Kool]{Lothar G\"ottsche and Martijn Kool}
\maketitle
\centerline{\emph{with an appendix by Lothar G\"ottsche and Hiraku Nakajima}}

\begin{abstract}
We conjecture a formula for the generating function of virtual $\chi_y$-genera of moduli spaces of rank 2 sheaves on arbitrary surfaces with holomorphic 2-form. Specializing the conjecture to minimal surfaces of general type and to virtual Euler characteristics, we recover (part of) a formula of C.~Vafa and E.~Witten. 

These virtual $\chi_y$-genera can be written in terms of descendent Donaldson invariants. Using T.~Mochizuki's formula, the latter can be expressed in terms of Seiberg-Witten invariants and certain explicit integrals over Hilbert schemes of points. These integrals are governed by seven universal functions, which are determined by their values on $\PP^2$ and $\PP^1 \times \PP^1$. Using localization we calculate these functions up to some order, which allows us to check our conjecture in many cases. 

In an appendix by H.~Nakajima and the first named author, the virtual Euler characteristic specialization of our conjecture is extended to include $\mu$-classes, thereby interpolating between Vafa-Witten's formula and 
Witten's conjecture for Donaldson invariants.
\end{abstract}

\section{Introduction} 

Let $S$ be a smooth projective complex surface with $b_1(S) = 0$ and polarization $H$. We denote by $$M:=M_{S}^{H}(r,c_1,c_2)$$ the moduli space of rank $r$ Gieseker $H$-stable torsion free sheaves on $S$ with Chern classes $c_1 \in H^2(S,\Z)$, $c_2 \in H^4(S,\Z)$. Suppose that no rank $r$ strictly Gieseker $H$-semistable sheaves with Chern classes $c_1, c_2$ exist. Then $M_{S}^{H}(r,c_1,c_2)$ is projective. 
T.~Mochizuki \cite{Moc} studied a perfect obstruction theory on $M$ with 
\begin{equation} \label{Tvir}
T^\vir = R\pi_* R\hom(\E,\E)_0[1],
\end{equation}
where $\E$ denotes the universal sheaf
on $M \times S$, $\pi : M \times S \rightarrow M$ is projection, and $(\cdot)_0$ denotes the trace-free part.\footnote{Although $\E$ may only exist \'etale locally, $R\pi_* R\hom(\E,\E)_0$ exists globally \cite[Sect.~10.2]{HL}.}  

This leads to a virtual cycle on $M$ of degree equal to the virtual dimension
\begin{equation} \label{vd}
\vd(M)=2rc_2-(r-1)c_1^2-(r^2-1)\chi(\O_S).
\end{equation}
The (algebraic) Donaldson invariants are then obtained by capping certain classes with the virtual cycle
\vspace{-0.1cm}
\begin{equation} \label{invgeneral}
\int_{[M]^{\vir}}  \tau_{\alpha_1}(\sigma_1) \cdots \tau_{\alpha_m}(\sigma_m),
\end{equation}
where $\sigma_1, \ldots \sigma_m \in H^*(S,\Q)$, $\alpha_1, \ldots, \alpha_m \geq 0$ are the descendence degrees, and the insertions $\tau_{\alpha_i}(\sigma_i)$ are defined in Section \ref{sec1}. One of the main achievements of \cite{Moc} is a beautiful formula expressing \eqref{invgeneral} for $r=2$ in terms of Seiberg-Witten invariants of $S$ and certain explicit integrals over $S^{[n_1]} \times S^{[n_2]}$, where $S^{[n_i]}$ denotes the Hilbert scheme of $n_i$ points on $S$. This formula was used by the first named author, H.~Nakajima, and K.~Yoshioka to prove the Witten conjecture for algebraic surfaces \cite{GNY3}. 

We are interested in the virtual $\chi_y$-genus of $M$ defined in \cite{FG}
$$
\chi_{-y}^{\vir}(M) := \sum_{p \geq 0} (-y)^p \chi^\vir(M,\Omega_{M}^{p,\vir}) \in \Z[y],
$$
where $\chi^\vir(M, \cdot)$ is virtual holomorphic Euler characteristic and $\Omega_{M}^{p, \vir} = \Lambda^p (T_{M}^{\vir})^{\vee}$.\footnote{We denote the $K$-group generated by locally free sheaves on $M$ by $K^0(M)$. Virtual holomorphic Euler characteristic and the definition of $\Lambda^p E \in K^0(M)$ for an arbitrary element $E \in K^0(M)$ are explained in \cite{FG}. The notation $(\cdot)^\vee$ is for derived dual.} We will usually use its shifted version
$$
\overline \chi_{-y}^{\vir}(M):=y^{-\frac{\vd(M)}{2}}\chi_{-y}^{\vir}(M),
$$ 
which is a \emph{symmetric} Laurent polynomial in $y^{\pm \frac{1}{2}}$ \cite[Cor.~4.9]{FG}. The virtual Euler characteristic is defined as
$$
e^\vir(M) :=\overline \chi_{-1}^{\vir}(M)= \int_{[M]^\vir} c_{\vd(M)}(T^\vir),
$$
where the last equality is the virtual Hopf index theorem \cite[Cor.~4.8]{FG}. We are interested in the coefficients of the generating function 
 $$
 \sfZ_{S,H,c_1}(x,y) := \sum_{c_2} \overline{\chi}_{-y}^\vir(M_{S}^{H}(2,c_1,c_2)) \, x^{4c_2 - c_{1}^{2} - 3 \chi(\O_S)},
 $$
where the power is $\vd(M_{S}^{H}(2,c_1,c_2))$. We will encounter the Dedekind eta function and three of the Jacobi theta functions
 \begin{align}
 \begin{split} \label{thetas}
\eta(q) &= q^{\frac{1}{24}} \prod_{n=1}^{\infty} (1-q^n), \quad \ \ \theta_1(q,y) = \sum_{n \in \Z} (-1)^n q^{\big(n+\frac{1}{2}\big)^2} y^{n+\frac{1}{2}} \\
\theta_2(q,y) &= \sum_{n \in \Z} q^{\big(n+\frac{1}{2}\big)^2} y^{n+\frac{1}{2}}, \quad \theta_3(q,y) = \sum_{n \in \Z} q^{n^2} y^{n}.
\end{split}
\end{align}
We define $$\overline{\eta}(q) := q^{-\frac{1}{24}} \eta(q)$$ and write the corresponding ``Nullwerte'' by $\theta_i(q) = \theta_i(q,1)$.

Seiberg-Witten invariants are oriented diffeomorphism invariants of 4-manifolds. For smooth projective surfaces over $\C$ satisfying $b_1(S)=0$ and $p_g(S)>0$, Seiberg-Witten invariants of an algebraic class $a \in H^2(S,\Z)$ can be constructed in terms of a natural virtual class on the linear system $|\O_S(a)|$ \cite[Sect.~6.3]{Moc}.\footnote{In \cite{DKO},  M.~D\"urr, A.~Kabanov, and C.~Okonek defined so-called Poincar\'e invariants of arbitrary smooth projective surfaces, which they conjectured to concide with Seiberg-Witten invariants from differential geometry. This conjecture is now fully established by the work of H.-l.~Chang and Y.-H.~Kiem \cite{CK}. More precisely \cite{DKO}, the Seiberg-Witten invariant $\SW(a)$ with $a \in H^2(S,\Z)$ constructed in algebraic geometry coincides with $\widetilde{\SW}(2a-K_S)$, where $\widetilde{\SW}(b)$ denotes the Seiberg-Witten invariant for class $b \in H^2(S,\Z)$ constructed in differential geometry.}
 Either the virtual class is zero, in which case one defines $\SW(a) = 0$, or it has virtual dimension zero, in which case its degree is denoted by $\SW(a) \in \Z$. A class $a \in H^2(S,\Z)$ is called a \emph{Seiberg-Witten basic class} when $\SW(a) \neq 0$.
Many surfaces only have Seiberg-Witten basic classes $0$ and $K_S \neq 0$ with corresponding Seiberg-Witten invariants $1$ and $(-1)^{\chi(\O_S)}$. Such surfaces are for example minimal surfaces of general type with $p_g(S)>0$ and $b_1(S)=0$ \cite[Thm.~7.4.1]{Mor}.
\begin{conjecture} \label{conj}
Let $S$ be a smooth projective surface with $b_1(S) = 0$ and $p_g(S)>0$. Suppose the Seiberg-Witten basic classes of $S$ are $0$ and $K_S \neq 0$. Let $H, c_1,c_2$ be chosen such that there exist no rank 2 strictly Gieseker $H$-semistable sheaves with Chern classes $c_1,c_2$. Then $\overline{\chi}_{-y}^{\vir}(M^H_S(2,c_1,c_2))$ is the coefficient of $x^{4c_2-c_1^2-3\chi(\O_S)}$ of 
\begin{align*}
\psi_S(x,y):=8 \Bigg( \frac{1}{2} \prod_{n=1}^{\infty} \frac{1}{(1-x^{2n})^{10}(1-x^{2n} y)(1-x^{2n} y^{-1})} \Bigg)^{\chi(\O_S)} \Bigg( \frac{2 \overline{\eta}(x^4)^2}{\theta_3(x,y^{\frac{1}{2}})} \Bigg)^{K_{S}^2}.
\end{align*}
\end{conjecture}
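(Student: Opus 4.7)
The natural starting point is to rewrite $\overline{\chi}_{-y}^{\vir}(M)$ in terms of the virtual cycle. By virtual Hirzebruch--Riemann--Roch \cite{FG},
\[
\chi_{-y}^\vir(M) = \int_{[M]^\vir} \td(T^\vir)\cdot\ch\Bigl(\sum_{p\ge 0}(-y)^p\Lambda^p(T^\vir)^\vee\Bigr),
\]
which expands as a polynomial (with $y$-dependent coefficients) in the Chern classes of $T^\vir$. Using $T^\vir = R\pi_* R\hom(\E,\E)_0[1]$ and Grothendieck--Riemann--Roch, $\ch(T^\vir)$ pushes down to a polynomial in slant products of $\ch(\E)$ with classes on $S$, i.e.\ to descendent insertions $\tau_\alpha(\sigma)$ in the sense of Section~\ref{sec1}. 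Thus each coefficient of $\sfZ_{S,H,c_1}(x,y)$ becomes a (universal, $y$-dependent) polynomial in descendent Donaldson invariants of the form \eqref{invgeneral}.

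Next I would feed these descendent invariants into Mochizuki's formula \cite{Moc}, which expresses them as a sum over Seiberg--Witten basic classes $a$ of $\SW(a)$ times explicit tautological integrals over $S^{[n_1]}\times S^{[n_2]}$ parametrised by $n_1+n_2 = c_2 - (\text{something in }a,c_1)$. Under the hypothesis that the only basic classes are $0$ and $K_S$, the SW sum collapses to two terms with weights $1$ and $(-1)^{\chi(\O_S)}$. A universality argument for tautological integrals on Hilbert schemes of points (of the kind used in \cite{GNY3} and going back to Ellingsrud--G\"ottsche--Lehn) then forces the resulting generating series to depend on $S$ only through the Chern numbers $K_S^2$, $c_2(S)$, and the pairings $a\cdot c_1$, $a^2$, and to be \emph{multiplicative} in these numerical data. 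Collecting the two SW contributions, one obtains an expression of the shape
\[
\sfZ_{S,H,c_1}(x,y) \;=\; f_1(x,y)^{\chi(\O_S)}\,f_2(x,y)^{K_S^2}\,g_{c_1}(x,y)
\]
for universal power series $f_1,f_2,g_{c_1}$, where $g_{c_1}$ tracks the dependence on $c_1\cdot K_S$ and $c_1^2$ and must be shown to be trivial in our setup (this uses the symmetrisation $0\leftrightarrow K_S$ coming from $\SW(K_S-a)=(-1)^{\chi(\O_S)}\SW(a)$).

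The remaining, and hardest, step is to identify $f_1$ and $f_2$ with the explicit eta--theta expressions of $\psi_S$. Since $\psi_S$ is multiplicative in $\chi(\O_S)$ and $K_S^2$, the two universal series are determined by the values of $\sfZ_{S,H,c_1}$ on any two surfaces with linearly independent Chern number vectors $(\chi(\O_S),K_S^2)$: the authors single out $\PP^2$ and $\PP^1\times\PP^1$, where Mochizuki's formula still makes formal sense and the Hilbert scheme integrals can be computed via torus localisation, even though $p_g=0$ forbids a direct geometric interpretation. Running localisation order by order in $x$ gives truncations of $f_1,f_2$ that can be matched against $\psi_S$; this constitutes the checks the paper refers to. Turning the verification into a proof requires an additional structural input forcing the modular/theta closed forms to all orders---for instance an $S$-duality type modularity statement for $\sfZ_{S,H,c_1}$, or a recursion (e.g.\ a blow-up or wall-crossing formula) that is compatible with $\psi_S$ and pins it down from finitely many coefficients. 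Recognising and proving such a structural property is, in my view, the main obstacle; the reduction to a two-parameter universal series via Steps~1--2 above is technically involved but essentially mechanical.
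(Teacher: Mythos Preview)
The statement is a \emph{conjecture}; the paper does not prove it and offers no argument that could be called ``the paper's own proof.'' What the paper does is exactly the programme you outline in Steps~1--2: express $\chi_{-y}^{\vir}$ via virtual HRR as a polynomial in descendent insertions (Proposition~\ref{chiyinsert}), plug into Mochizuki's formula (Theorem~\ref{mocthm}), and apply a universality argument for the resulting Hilbert scheme integrals (Proposition~\ref{univ}). Your proposal and the paper's approach to gathering evidence are therefore the same, and you are right that the final identification of the universal series with the explicit eta/theta expressions is the genuine open gap.

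One point where your sketch is too optimistic: the universality step does \emph{not} produce an expression of the shape $f_1^{\chi(\O_S)} f_2^{K_S^2} g_{c_1}$ with $g_{c_1}$ forced to be trivial by the $0\leftrightarrow K_S$ symmetry. The paper's Proposition~\ref{univ} gives \emph{seven} universal functions $A_1,\ldots,A_7$ in the variables $a_1^2, a_1c_1, c_1^2, a_1K_S, c_1K_S, K_S^2, \chi(\O_S)$, and after summing the two SW contributions one still has genuine dependence on $c_1^2$ and $c_1K_S$ in the intermediate expression $\sfA_{\underline{\alpha}}$ of~\eqref{defsfA}. That the final answer $\psi_S$ depends only on $\chi(\O_S)$ and $K_S^2$ is not a consequence of universality plus the SW involution; it is part of the conjectural content itself (see Conjecture~\ref{numconj}, which is the precise numerical statement about the $A_i$ that would be needed). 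So ``essentially mechanical'' overstates what Steps~1--2 deliver: they reduce the conjecture to an explicit identity among seven computable power series, but proving that identity---or even the weaker collapse of the $c_1$-dependence---remains open.
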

In  this conjecture, we can view $\psi_S(x,y)$ as generating function for $\overline{\chi}_{-y}^{\vir}(M^H_S(2,c_1,c_2))$ for all $c_1,c_2$. As a consequence of the conjecture, $\sfZ_{S,H,c_1}(x,y)$ is independent of the choice of polarization $H$, which we often omit from our notation. There is actually a closed formula for $\sfZ_{S,c_1}(x,y)$, which we give in Proposition \ref{fixedc1prop}. Note that the first factor is related to $\theta_1(q,y)$ by the Jacobi triple product formula
$$
\theta_1(q,y) =q^{\frac{1}{4}}(y^{\frac{1}{2}}-y^{-\frac{1}{2}})\prod_{n=1}^\infty(1-q^{2n})(1-q^{2n} y)(1-q^{2n} y^{-1}).
$$
We also present a generalization of this conjecture to \emph{arbitrary} smooth projective surfaces $S$ with $b_1(S) = 0$ and $p_g(S)>0$ (Conjecture \ref{generalsurfconj}), but the above conjecture is easier to state. 

\begin{remark} \label{bar} Replacing $x$ by $x y^{\frac{1}{2}}$, we go from generating functions for $\overline{\chi}_{-y}^{\vir}$ to generating functions for $\chi_{-y}^{\vir}$. Therefore we also get a conjectural generating function for the non-shifted virtual $\chi_y$-genera of the moduli spaces. Under the same assumptions, Conjecture \ref{conj} gives that $\chi^{\vir}_{-y}(M^H_S(2,c_1,c_2))$ is the coefficient of $x^{4c_2-c_1^2-3\chi(\O_S)}$ of 
$$\psi_S(xy^{\frac{1}{2}},y)=8 \Bigg( \frac{1}{2} \prod_{n=1}^{\infty} \frac{1}{(1-x^{2n}y^n)^{10}(1-x^{2n} y^{n+1})(1-x^{2n}y^{n-1})} \Bigg)^{\chi(\O_S)} \Bigg( \frac{2 \overline{\eta}(x^4y^2)^2}{\theta_3(xy^{\frac{1}{2}},y^{\frac{1}{2}})} \Bigg)^{K_{S}^2}$$
with $$\overline\eta(x^4y^2)=\prod_{n=1}^\infty (1-x^{4n}y^{2n}), \quad \theta_3(xy^{\frac{1}{2}},y^{\frac{1}{2}})=\sum_{n\in \Z} y^{\binom{n+1}{2}}x^{n^2}.$$
This formula also makes it evident that $\chi^{\vir}_{-y}(M^H_S(2,c_1,c_2))$ is a polynomial.
\end{remark}

Next, denote by 
$$
\sfZ_{S,c_1}(x) := \sfZ_{S,c_1}(x,1)
$$ 
the generating function of virtual Euler characteristics. When $-K_S H > 0$ or $K_S=0$ (and some other cases including elliptic surfaces), these are just ordinary topological Euler characteristics because the obstructions vanish and the moduli space is smooth. Then $\sfZ_{S,c_1}(x)$ was studied by many people, e.g.~\cite{Kly, Got1, Got2, GH, Yos1, Yos2, Yos3}. Conjecture \ref{conj} implies the following formula which follows by specializing to $y=1$.
\begin{corollary} [Proposition \ref{fixedc1prop}] \label{cor}
Assume Conjecture \ref{conj}. Let $S$ be a smooth projective surface with $b_1(S) = 0$ and $p_g(S)>0$. Suppose the Seiberg-Witten basic classes of $S$ are $0$ and $K_S \neq 0$. Let $H, c_1$ be chosen such that there exist no rank 2 Gieseker strictly $H$-semistable sheaves with first Chern class $c_1$. Then
\begin{align*} 
\sfZ_{S,c_1}(x) =2\sum_{k=0}^3 \frac{(i^k)^{c_1^2 - \chi(\O_S)}}{(2\overline{\eta}((-1)^kx^2)^{12})^{\chi(\O_S)}}\Bigg(\frac{2\overline{\eta}(x^4)^2}{\theta_3(i^kx)} \Bigg) ^{K_S^2},
\end{align*}
where $i = \sqrt{-1}$.
\end{corollary}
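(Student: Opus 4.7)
The plan is to derive the formula directly from Conjecture \ref{conj} by (i) specializing $\psi_S(x,y)$ at $y=1$, and (ii) extracting coefficients in the appropriate residue class modulo $4$ via the standard root-of-unity trick. The second step is needed because $\sfZ_{S,c_1}(x)$ only collects the powers $x^{\vd(M)}$ with $\vd(M) = 4c_2 - c_1^2 - 3\chi(\O_S)$, i.e., a single residue class modulo $4$, whereas $\psi_S(x,1)$ a priori carries coefficients in every residue class.

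First I would set $y=1$ in the formula of Conjecture \ref{conj}. The first infinite product collapses to $\prod_{n\geq 1}(1-x^{2n})^{12} = \overline{\eta}(x^2)^{12}$, while $\theta_3(x,y^{1/2})|_{y=1} = \theta_3(x)$, giving
\[
\psi_S(x,1) = 8 \bigl(2\overline{\eta}(x^2)^{12}\bigr)^{-\chi(\O_S)}\Bigl(\frac{2\overline{\eta}(x^4)^2}{\theta_3(x)}\Bigr)^{K_S^2}.
\]
Let $r \equiv -c_1^2 - 3\chi(\O_S) \pmod 4$ denote the residue class of $\vd(M)$. The averaging identity $\tfrac{1}{4}\sum_{k=0}^{3} i^{k(j-r)}$ equals $1$ if $j \equiv r \pmod 4$ and $0$ otherwise, so
\[
\sfZ_{S,c_1}(x) = \frac{1}{4}\sum_{k=0}^{3} i^{-kr}\, \psi_S(i^k x, 1).
\]
Reducing $i^{-kr} = i^{k(c_1^2 + 3\chi(\O_S))}$ modulo $4$ (using $i^{3} \equiv i^{-1}$) then yields $i^{-kr} = (i^k)^{c_1^2 - \chi(\O_S)}$.

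The last step is to evaluate each $\psi_S(i^k x, 1)$ by applying the substitutions $\overline{\eta}(x^2) \mapsto \overline{\eta}((-1)^k x^2)$, $\overline{\eta}(x^4) \mapsto \overline{\eta}(x^4)$ (since $i^{4k}=1$), and $\theta_3(x) \mapsto \theta_3(i^k x)$ (immediate from $\theta_3(q) = \sum_{n} q^{n^2}$). Substituting these back and absorbing the overall factor $8/4 = 2$ reproduces the stated formula term by term. There is no substantial obstacle—the argument is essentially bookkeeping once one has Conjecture \ref{conj} in hand—but one must be careful with the sign tracking in the exponent of $i^k$: the shift $-\chi(\O_S)$ originates from the $-3\chi(\O_S)$ contribution to $\vd(M)$ combined with $i^{3}\equiv i^{-1}$, and not from the exponent of $\chi(\O_S)$ appearing in the first factor of $\psi_S$.
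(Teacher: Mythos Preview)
Your proof is correct and takes essentially the same approach as the paper: the root-of-unity averaging trick to extract the residue class $\vd(M) \equiv -c_1^2 - 3\chi(\O_S) \pmod 4$ from $\psi_S$, combined with specialization to $y=1$. The only cosmetic difference is that the paper (via Proposition \ref{fixedc1prop}) performs the averaging for general $y$ first and then sets $y=1$, whereas you specialize to $y=1$ before averaging; the two operations commute, so this is immaterial.
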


From definition \eqref{thetas}, we see that
$\theta_3(i^k x)=\theta_3(x^4)+i^k\theta_2(x^4)$.
Therefore we can rewrite the formula for $\sfZ_{S,c_1}(x)$ of Corollary \ref{cor} as 
\begin{align} 
\begin{split} \label{evirgenfunv2}
&\frac{2}{(2\overline{\eta}(x^2)^{12})^{\chi(\O_S)}} \Bigg\{ \Bigg( \frac{\theta_3(x^4)+\theta_2(x^4)}{2\overline{\eta}(x^4)^2} \Bigg)^{-K_S^2} + (-1)^{c_1^2 - \chi(\O_S)} \Bigg( \frac{\theta_3(x^4)-\theta_2(x^4)}{2\overline{\eta}(x^4)^2} \Bigg)^{-K_S^2} \Bigg\} \\
&+\frac{2 (-i)^{c_1^2 - \chi(\O_S)}}{(2\overline{\eta}(-x^2)^{12})^{\chi(\O_S)}} \Bigg\{ \Bigg( \frac{\theta_3(x^4)-i\theta_2(x^4)}{2\overline{\eta}(x^4)^2} \Bigg)^{-K_S^2} + (-1)^{c_1^2 - \chi(\O_S)} \Bigg( \frac{\theta_3(x^4)+i\theta_2(x^4)}{2\overline{\eta}(x^4)^2} \Bigg)^{-K_S^2} \Bigg\}.
\end{split}
\end{align}
In \cite{VW}, C.~Vafa and E.~Witten study certain invariants related to $S$-duality. On $\PP^2$ their invariants are topological Euler characteristics $e(M)$. For surfaces $S$ with smooth connected canonical divisor, they give a formula \cite[Eqn.~(5.38)]{VW}. Equation \eqref{evirgenfunv2} coincides with \emph{part of} their formula, namely all except the first two terms of \cite[Eqn.~(5.38)]{VW} and up to an overall factor $x^{-\chi(\O_S) + K_S^2/3}$ coming from our choice of normalization.
Likewise, for $y=1$, the more general Conjecture \ref{generalsurfconj} specializes to (part of) a formula from the physics literature due to R.~Dijkgraaf, J.-S.~Park, and B.~Schroers \cite{DPS}.

\begin{remark} By work of S.~Donaldson, D.~Gieseker and J.~Li, and others, if we fix $S$ with $b_1(S)=0$ and ample class $H$, then 
$M:=M^H_S(2,c_1,c_2)$ is irreducible and generically smooth of the expected dimension for sufficiently large $\vd=4c_2-c_1^2-3\chi(\O_S)$. It is then also normal and a local complete intersection. See \cite[Chap.~9]{HL} for references.
In this case \cite[Thm.~4.15]{FG} implies that $e^\vir(M)$ is the degree of the Fulton Chern class  $c_{F}(M)$, which agrees with 
the Euler characteristic of any smoothening of $M$, and Corollary \ref{cor} predicts this number. More generally, all virtual Chern numbers of $M$ coincide with the corresponding Chern numbers of any smoothening \cite[Rem.~4.16]{FG}. In particular the virtual $\chi_y$-genus of $M$ equals the $\chi_y$-genus of any smoothening and Conjecture \ref{conj} predicts these genera.
\end{remark}
\begin{remark}
Recently, Y.~Tanaka and R.~P.~Thomas \cite{TT1} defined a symmetric perfect obstruction theory on the moduli space of stable Higgs pairs $(E,\phi)$ on $S$, where $E$ has fixed determinant and $\phi$ is trace-free.\footnote{In \cite{TT1}, Tanaka-Thomas consider the case where semistability and stability of Higgs pairs coincide. They treat the semistable case in a separate paper \cite{TT2}.} Stable Higgs pairs are related by a Hitchin-Kobayashi correspondence to solutions of the Vafa-Witten equations. There is a $\C^*$-scaling action on the Higgs field and Tanaka-Thomas define ``$\mathrm{SU}(r)$ Vafa-Witten invariants'' by virtual localization with respect to this action. They show that the contribution to the invariant of the components corresponding to $\phi = 0$ are precisely the virtual Euler characteristics $\sfZ_{S,c_1}(x)$ that we study (though Tanaka-Thomas's invariants are defined for any rank). Moreover in the rank 2 case and for $S$ with smooth connected canonical divisor and $b_1(S) = 0$, they conjecture that the contribution of the \emph{other} components of the $\C^*$-fixed locus corresponds to the first two terms of \cite[Eqn.~(5.38)]{VW}. Recall that these are precisely the two terms that we do not see. They gather evidence for this by computing the contributions of other components for some low orders. Therefore by the calculations of this paper and their conjecture, their invariant indeed appears to be the correct mathematical definition of Vafa-Witten's invariant \cite[Eqn.~(5.38)]{VW}. Also recently, A.~Gholampour, A.~Sheshmani, and S.-T.~Yau studied Donaldson-Thomas invariants of local surfaces \cite{GSY}. Their invariants are closely related to Tanaka-Thomas's invariants. The virtual Euler characteristics that we calculate are part of their invariants.
\end{remark}

We approach Conjecture \ref{conj} as follows:
\begin{itemize}
\item We use the virtual Hirzebruch-Riemann-Roch formula\footnote{See \cite[Cor.~3.4]{FG}, or \cite{CFK} in the context of $[0,1]$-manifolds.} to express $\overline{\chi}_{-y}(M)$ in terms of certain descendent Donaldson invariants (Proposition \ref{chiyinsert}).
\item We apply Mochizuki's formula to these descendent Donaldson invariants. This expresses $\sfZ_{S,c_1}(x,y)$ in terms of Seiberg-Witten invariants of $S$ and certain integrals over $S^{[n_1]} \times S^{[n_2]}$. Although Mochizuki's formula requires $p_g(S)>0$, these integrals make sense for \emph{any} surface $S$.
\item We show that the integrals over $S^{[n_1]} \times S^{[n_2]}$ can be expressed in terms of seven universal series $A_1, \ldots, A_7 \in 1+q \, \Q[y](\!(s)\!)[[q]]$ (Proposition \ref{univ}).
\item The universal series $A_i$ are entirely determined by their values on $S = \PP^2$ and $\PP^1 \times \PP^1$. We calculate $A_i(s,y,q)$ to order $q^{7}$ and $A_i(s,1,q)$ to order $q^{30}$ by Atiyah-Bott localization (Section \ref{toricsec} and Appendix \ref{app1}). 
\item We then verify Conjecture \ref{conj} up to a certain order in $x$ for examples of the following types: blow-ups $\Bl_p K3$, double covers of $\PP^2$, double covers of $\PP^1 \times \PP^1$, double covers of Hirzebruch surfaces $\FF_a$, surfaces in $\PP^3$, $\PP^2 \times \PP^1$ and $\PP^1 \times \PP^1 \times \PP^1$, and complete intersections in $\PP^4$ and $\PP^5$ (Section \ref{evidencesec}). 
\end{itemize} 
The reduction to toric surfaces also allows us to relate $\sfZ_{S,c_1}(x,1)$ to the Nekrasov partition function with one adjoint matter and one fundamental matter (Appendix \ref{Nekrasovsec}). 
As a consequence, $\sfZ_{S,c_1}(x,1)$ can be expressed in terms of four universal series $F_0, H, G_1, G_2$ (see Remark \ref{4univ} for details). This is not used elsewhere in the paper.

We present two generalizations of Conjecture \ref{conj}: 
\begin{itemize}
\item Conjecture \ref{numconj} is a statement purely about intersection numbers on Hilbert schemes of points. Together with a strong form of Mochizuki's formula (Remark \ref{strongform}), it implies Conjecture \ref{conj} (see Proposition \ref{numconjimpliesconj}). In addition, the strong form of Mochizuki's formula and Conjecture \ref{numconj} imply a version of Conjecture \ref{conj} for arbitrary blow-ups of surfaces $S$ with $b_1(S)=0$, $p_g(S)>0$, and Seiberg-Witten basic classes $0$ and $K_S \neq 0$ (Proposition \ref{nonmingt}). We test Conjecture \ref{numconj} in many cases in Section \ref{evidencesec}.
\item Conjecture \ref{generalsurfconj} generalizes Conjecture \ref{conj} to \emph{arbitrary} surfaces with $b_1(S) = 0$ and $p_g(S)>0$. This conjecture has two further applications. (1) It implies a blow-up formula for virtual $\chi_y$-genera, which is reminiscent of the blow-up formula of  W.-P.~Li and Z.~Qin (Proposition \ref{blowupcor}). (2) It implies a formula for surfaces with canonical divisor with irreducible reduced connected components (Proposition \ref{disconn}). For $y=1$, the latter recovers another formula of Vafa-Witten \cite[(5.45)]{VW}. We check Conjecture \ref{generalsurfconj} up to a certain order in $x$ in the following cases (other than the cases above): $K3$ surfaces, blow-ups $\Bl_p \Bl_q K3$, and elliptic surfaces $E(n)$ for various $n \geq 4$. 
\end{itemize}

In Appendix \ref{appC}, the first named author and H.~Nakajima conjecture a formula unifying the virtual Euler characteristic specialization of Conjecture \ref{generalsurfconj} and Witten's conjecture for Donaldson invariants. 

In \cite{GK} we extend these results and conjectures to virtual elliptic genera and virtual cobordism classes. Besides Mochizuki's formula \cite{Moc}, this paper uses ideas from \cite{GNY1,GNY2,GNY3}. The physics approach to the calculation of elliptic genera of instanton moduli spaces was discussed in N.~Nekrasov's PhD thesis \cite{Nek1} and the papers \cite{LNS, BLN}. We refer to \cite{LLZ} for applications to Gopakumar-Vafa invariants.

\begin{acknowledgements}
We thank Richard Thomas for suggesting to look at Mochizuki's book in the context of the virtual Euler characteristic many years ago. The first named author thanks Hiraku Nakajima and K$\bar{\textrm{o}}$ta Yoshioka for collaboration and many useful discussions on instanton counting and Mochizuki's formula over many years, and in particular to Hiraku Nakajima for very useful discussions and comments. The second named author was supported by Marie Sk{\l}odowska-Curie IF 656898.
\end{acknowledgements}

\section{Mochizuki's formula} \label{sec1}

Let $S$ be a smooth projective surface with $b_1(S) = 0$ and polarization $H$. Denote by $M:=M_{S}^{H}(r,c_1,c_2)$ the moduli space of rank $r$ Gieseker $H$-stable torsion free sheaves on $S$ with Chern classes $c_1,c_2$. We assume there are no rank $r$ strictly Gieseker $H$-semistable sheaves with Chern classes $c_1,c_2$. Then $M_{S}^{H}(r,c_1,c_2)$ is projective. In this section, we first show that ${\chi}_{-y}^{\vir}(M)$ can be expressed in terms of descendent Donaldson invariants. We then recall Mochizuki's formula \cite{Moc} and apply it to our setting.

We start with some notation. Assume there exists a universal sheaf $\E$ on $M \times S$ (in fact, we get rid of this assumption at the end of this section in Remark \ref{univexists}). Let $\sigma \in H^*(S,\Q)$ and $\alpha \geq 0$, then we define
$$
\tau_{\alpha}(\sigma) := \pi_{M*} \big( \ch_{2+\alpha}(\E) \cap \pi_{S}^{*} \, \sigma \big),
$$
where 
$\pi_M : M \times S \rightarrow M$ and $\pi_S : M \times S \rightarrow S$ denote projections. We refer to $\tau_{\alpha}(\sigma)$ as a descendent insertion of descendence degree $\alpha$. The insertions $\tau_0(\sigma)$ are called primary insertions.

We introduce some further notation which will be useful later. Suppose $X$ is any projective $\C$-scheme and $E$ a vector bundle on $X$, then we define
$$
\Lambda_y E := \sum_{p=0}^{\rk(E)} [\Lambda^p E] \, y^p \in K^0(X)[[y]].
$$
This element is invertible in $K^0(X)[[y]]$, so we can define $\Lambda_y (-E) = 1/ \Lambda_y E$. Hence we can also define $\Lambda_y E$ for any element $E$ of $K^0(X)$. Next, for any element $E$ of $K^0(X)$ we define
\begin{align} \label{defT}
\sfT_{y}(E,t) := t^{-\rk E} \sum_k \left\{ \ch(\Lambda_{y} E^{\vee}) \, \td(E) \right\}_k t^{k},
\end{align}
where $\{\cdot\}_k \in A^k(X)_{\Q}$ selects the degree $k$ part in the Chow ring. Since we have
\begin{equation*} 
\Lambda_y (E_1 + E_2) = \Lambda_y E_1 \otimes \Lambda_y E_2,
\end{equation*}
the standard properties of Chern characters and Todd classes give
\begin{equation} \label{Tprod}
\sfT_y(E_1 + E_2,t) = \sfT_y(E_1,t) \, \sfT_y(E_2,t).
\end{equation}
This multiplicative property will be crucial in Section \ref{secuniv}. We also note that for a line bundle $L$ on $X$ with $c_1(L) = x$, we have
\begin{equation} \label{Tlinebd}
\sfT_y(L,t) = \frac{x (1+y e^{-x t})}{1-e^{-x t}}.
\end{equation}
The variable $t$ can be exploited for a convenient normalization. Indeed if we take $t = 1+y$, then equations \eqref{Tprod} and \eqref{Tlinebd} imply
\begin{equation} \label{trivialshift}
\sfT_{y}(E - \O_X^{\oplus r},1+y) = \sfT_{y}(E,1+y)
\end{equation}
for all $r \geq 0$. This will be used in Section \ref{secuniv} as well. Another convenient consequence of the specialization $t = 1+y$ is that $\sfT_{y}(E,1+y)$ is a \emph{polynomial} in $1+y$. Moreover, its leading coefficient is given by
\begin{equation} \label{specialize}
\sfT_{y}(E,1+y) \Big|_{y=-1} = c(E),
\end{equation}
where $c(\cdot)$ denotes total Chern class. This essentially follows from \cite[Thm.~4.5(c)]{FG}. 

We come back to $M:=M_{S}^{H}(r,c_1,c_2)$. The next proposition involves an argument that appears more generally in the context of stable pairs on 3-folds in \cite{She} (see also \cite{Pan}).
\begin{proposition} \label{chiyinsert}
For every $S,H,r,c_1,c_2$ as above, there exists a polynomial expression $P(\E)$ in certain descendent insertions $\tau_{\alpha}(\sigma)$ and $y$ such that
$$
\chi^{\vir}_{-y}(M_{S}^{H}(r,c_1,c_2)) = \int_{[M_{S}^{H}(r,c_1,c_2)]^{\vir}} P(\E).
$$
\end{proposition}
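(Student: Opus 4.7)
The plan is to combine the virtual Hirzebruch-Riemann-Roch formula with Grothendieck-Riemann-Roch along the projection $\pi_M: M \times S \to M$, and then rewrite the resulting push-forwards in a Künneth basis of $H^*(S,\Q)$.

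First, virtual HRR \cite{FG} gives
$$
\chi^{\vir}_{-y}(M) = \int_{[M]^{\vir}} \ch\!\big(\Lambda_{-y}(T^{\vir})^{\vee}\big)\,\td(T^{\vir}).
$$
For any $E \in K^0(M)$, the class $\ch(\Lambda_{-y}E^{\vee})\td(E)$ is a universal polynomial in $y$ and in the Chern characters $\ch_i(E)$ (it is essentially $\sfT_{-y}(E,1)$ of \eqref{defT}). It therefore suffices to express each $\ch_k(T^{\vir})$ as a polynomial in descendent insertions.

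Second, since $\pi_M$ is smooth with relative tangent bundle $\pi_S^* T_S$, Grothendieck-Riemann-Roch yields
$$
\ch(T^{\vir}) = -\,\pi_{M*}\!\big(\ch(R\hom(\E,\E)_0)\cdot \pi_S^*\td(T_S)\big),
$$
where the overall sign comes from the shift $[1]$ in \eqref{Tvir}. In $K$-theory, $R\hom(\E,\E)_0 = R\hom(\E,\E) - \O$ and $\ch(R\hom(\E,\E)) = \ch(\E)^{\vee}\,\ch(\E)$ with $\ch_i(\E^{\vee}) = (-1)^i\ch_i(\E)$, so the integrand being pushed forward is a polynomial in the $\ch_i(\E)$ multiplied by a pull-back from $S$.

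Third, fix a homogeneous $\Q$-basis $\{\sigma_j\}$ of $H^*(S,\Q)$ with Poincaré-dual basis $\{\sigma^j\}$. For $i \geq 2$, the Künneth decomposition on $M \times S$ reads
$$
\ch_i(\E) = \sum_j \pi_M^* \tau_{i-2}(\sigma^j)\cdot \pi_S^*\sigma_j,
$$
and the projection formula then shows that the $\pi_M$-push-forward of any monomial in the $\ch_i(\E)$'s (for $i \geq 2$) times $\pi_S^*\gamma$ is a polynomial in the descendents $\tau_\alpha(\sigma^j)$. Substituting back into $\ch_k(T^{\vir})$ and then into the virtual HRR integrand yields the required polynomial $P(\E)$. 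This is a variation on arguments appearing in the stable-pair setting, cf.\ \cite{She, Pan}.

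The main technical obstacle is the handling of the low Chern characters $\ch_0(\E) = r$ and $\ch_1(\E) = c_1(\E)$, for which the formula above does not apply since $\tau_\alpha$ requires $\alpha \geq 0$. The trace-free structure is essential here: $\ch(\E)^{\vee}\ch(\E)$ is invariant under the gauge action $\E \leadsto \E \otimes \pi_M^* L$, so only gauge-invariant expressions can appear after push-forward. By normalizing the universal sheaf so that its determinant is pulled back from $S$ (or, equivalently, by replacing $\ch(\E)$ with its gauge-invariant traceless analogue), contributions from $\ch_0$ and $\ch_1$ collapse to the fixed numerical data $r$, $c_1$, and $\chi(\O_S)$, leaving a polynomial in descendents of degree $\alpha \geq 0$. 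The possible absence of a global universal sheaf is dealt with as in Remark \ref{univexists} and the footnote to \eqref{Tvir}.
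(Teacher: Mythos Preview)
Your proof is correct and follows essentially the same strategy as the paper: virtual HRR, then GRR for $\ch(T^{\vir})$, then a K\"unneth argument to turn push-forwards of products of $\ch_i(\E)$ into products of descendents. The only organizational difference is that you K\"unneth-decompose each $\ch_i(\E)$ individually, whereas the paper pulls back to $M\times S\times S$ and uses the K\"unneth decomposition of the diagonal class $\Delta\in H^4(S\times S)$ to factor $\pi_{M*}(\ch_\alpha(\E)\ch_\beta(\E)\cap\pi_S^*\sigma)$ directly into a sum of products $\tau_\alpha(\sigma\theta_1^{(i)})\,\tau_\beta(\theta_2^{(j)})$; the two are equivalent. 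Your explicit discussion of the low-degree pieces $\ch_0(\E), \ch_1(\E)$, and the gauge-invariance of the trace-free integrand that renders them harmless, is actually more careful than the paper, which glosses over this point.
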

\begin{proof}
By the virtual Hirzebruch-Riemann-Roch theorem \cite[Cor.~3.4]{FG}
\begin{align*}
\chi^{\vir}_{-y}(M) &= \int_{[M]^{\vir}} \sfT_{-y}(T^{\vir},1-y),
\end{align*}
where $\sfT_{-y}(T^{\vir},1-y)$ can be expressed as a $\Q$-linear combination of monomials in $c_i(T^{\vir})$ and $y$.
By \eqref{Tvir} and Grothendieck-Riemann-Roch, each $c_{i}(T^{\vir})$ can be expressed as a $\Q$-linear combination of monomials in 
\begin{equation} \label{twoterms}
\pi_{M*} \big( \ch_\alpha(\E) \ch_\beta(\E) \cap \pi_{S}^{*} \sigma \big),
\end{equation}
where $\sigma$ is one of the components of $\td(S)$. Therefore it suffices to show that every expression of the form \eqref{twoterms} is a polynomial in descendent insertions. This will then define the universal polynomial $P(\E)$. 

Let $\pi_{ij}$ and $\pi_i$ be the projections from $M \times S \times S$ to factors $(i,j)$ and $i$ respectively. Then \eqref{twoterms} equals
\begin{equation} \label{MxSxS}
\pi_{1*} \big( \pi_{12}^{*} \ch_\alpha(\E) \ \pi_{13}^{*} \ch_\beta(\E) \cap \pi_2^* \sigma \ \pi_{23}^{*} \Delta \big),
\end{equation}
where $\Delta \in H^4(S \times S,\Q)$ is (Poincar\'e dual to) the class of the diagonal. Next we consider the K\"unneth decomposition
$$
\Delta = \sum_{i+j=4} \theta_{1}^{(i)} \otimes \theta_{2}^{(j)},
$$
where $\theta_{1}^{(i)} \in H^i(S,\Q)$ and $\theta_{2}^{(j)} \in H^j(S,\Q)$. Substituting this decomposition into \eqref{MxSxS}, factoring the push-forward as $\pi_1 = \pi_M \circ \pi_{12}$, and applying the projection formula gives
\begin{equation*} 
\pi_{M*} \big( \ch_\alpha(\E) \, \ch_\beta(\E) \cap \pi_{S}^{*} \sigma \big) = \sum_{i+j=4} \tau_\alpha(\sigma \theta_{1}^{(i)}) \, \tau_\beta(\theta_{2}^{(j)}). \qedhere
\end{equation*}
\end{proof}
\begin{remark}
Note that in this section (and the next) we use $\chi_{-y}^{\vir}$ instead of $\overline{\chi}_{-y}^{\vir}$ as in Conjecture \ref{conj}. The reason is that the intermediate formulae of this and the next section look slightly easier for $\chi_{-y}^{\vir}$ whereas the final formula of Conjecture \ref{conj} looks more elegant for $\overline{\chi}_{-y}^{\vir}$. One can easily pass from one to the other by Remark \ref{bar}.
\end{remark}

Next we recall Mochizuki's formula \cite[Thm.~1.4.6]{Moc}. His formula is derived by a beautiful argument involving \emph{geometric wall-crossing} for moduli spaces of so-called Bradlow pairs depending on a stability parameter $\alpha \in \Q_{>0}$. For $\alpha \rightarrow \infty$ the moduli spaces are empty and for $\alpha \rightarrow 0^+$ the moduli space is a projective bundle over $M$. On a wall, Mochizuki uses a ``master space'' with $\C^*$-action, whose fixed locus contains components corresponding to the moduli spaces of Bradlow pairs on either side of the wall. Other components of the fixed locus can be seen as contributions from wall-crossing. They lead to the Seiberg-Witten invariants and integrals over $S^{[n_1]} \times S^{[n_2]}$ described below. These wall-crossing terms are computed by Graber-Pandharipande's virtual localization on the master space. This geometric wall-crossing is very different from \emph{motivic wall-crossing}, as in \cite{Joy}, which does not work for ample $K_S$ due to non-vanishing $\Ext^2$. Among other things, Mochizuki's formula was used in \cite{GNY3} to prove Witten's conjecture for algebraic surfaces. 

There are two ingredients for Mochizuki's formula. The first ingredient is the Seiberg-Witten invariants $\SW(a)$ of $S$ in class $a \in H^2(S,\Z)$ mentioned in the introduction. 

The second ingredient is certain integrals over products of Hilbert schemes of points. On $S^{[n_1]} \times S^{[n_2]} \times S$ we have the pull-backs of the universal ideal sheaves $\I_1$, $\I_2$ from $S^{[n_1]} \times S$, $S^{[n_2]} \times S$. For any line bundle $L \in \Pic(S)$ we denote by $L^{[n_i]}$ the tautological vector bundle on $S^{[n_i]}$ defined by
$$
L^{[n_i]} := p_* q^* L,
$$
with $p : \cZ_i \rightarrow S^{[n_i]}$, $q : \cZ_i \rightarrow S$ projections from the universal subscheme $\cZ_i \subset S^{[n_i]} \times S$. 

We consider $S^{[n_1]} \times S^{[n_2]}$ to be endowed with a trivial $\C^*$-action and we denote the generator of the character group by $\s$.\footnote{This action originates from the $\C^*$-action on the master space mentioned above.} Moreover we write $s$ for the generator of 
$$
H^*(B\C^*,\Q) = H^*_{\C^*}(pt,\Q) \cong \Q[s].
$$
Let $P(\E)$ be any polynomial in descendent insertions $\tau_{\alpha}(\sigma)$, which arises from a polynomial in Chern numbers of $T^\vir$ (e.g.~such as in Proposition \ref{chiyinsert}). For any $a_1, a_2 \in A^1(S)$ and $n_1, n_2 \in \Z_{\geq 0}$, Mochizuki defines $\Psi(a_1,a_2,n_1,n_2)$ as follows
\begin{equation} \label{Psi}
\Coeff_{s^0} \Bigg( \frac{P(\I_1(a_1) \otimes \s^{-1} \oplus \I_2(a_2) \otimes \s)}{Q(\I_1(a_1) \otimes \s^{-1}, \I_2(a_2) \otimes \s)} \frac{\Eu(\O(a_1)^{[n_1]}) \, \Eu(\O(a_2)^{[n_2]} \otimes \s^2)}{(2s)^{n_1+n_2 - \chi(\O_S)}} \Bigg).
\end{equation}
Let us explain this notation. Here $\Eu(\cdot)$ denotes $\C^*$-equivariant Euler class and $\Coeff_{s^0}$ refers to taking the coefficient of $s^0$.\footnote{This differs slightly from Mochizuki who uses $p_g(S)$ instead of $\chi(\O_S)$ and takes a residue. Consequently our definition differs by a factor $2$ from Mochizuki's. The difference is accounted for by the fact that Mochizuki works on the moduli stack of oriented sheaves which maps to $M$ via a degree $\frac{1}{2}:1$ \'etale morphism \cite[Rem.~4.2]{GNY3}.} The notation $\I_i(a_i)$ is short-hand for $\I_i \otimes \pi_S^* \O(a_i)$. Furthermore, for any $\C^*$-equivariant sheaves $\E_1$, $\E_2$ on $S^{[n_1]} \times S^{[n_2]} \times S$ flat over $S^{[n_1]} \times S^{[n_2]}$, Mochizuki defines
$$
Q(\E_1,\E_2) :=\Eu(- R\hom_{\pi}(\E_1,\E_2) - R\hom_{\pi}(\E_2,\E_1)), 
$$
where $\pi : S^{[n_1]} \times S^{[n_2]} \times S \rightarrow S^{[n_1]} \times S^{[n_2]}$ denotes projection and 
$$
R\hom_\pi(\cdot, \cdot) := R\pi_* R\hom(\cdot,\cdot).
$$ 
Finally, $P(\cdot)$ is the expression obtained from $P(\E)$ by formally replacing $\E$ by $\cdot$. We define 
$$
\widetilde{\Psi}(a_1,a_2,n_1,n_2,s)
$$
by expression \eqref{Psi} but \emph{without} applying $\Coeff_{s^0}$. 

Next, let $c_1,c_2$ be a choice of Chern classes and let $\ch = (2,c_1,\frac{1}{2}c_1^2 - c_2)$ denote the corresponding Chern character. For any decomposition $c_1 = a_1 + a_2$, Mochizuki defines
\begin{equation} \label{cA}
\cA(a_1,a_2,c_2) := \sum_{n_1 + n_2 = c_2 - a_1 a_2} \int_{S^{[n_1]} \times S^{[n_2]}} \Psi(a_1,a_2,n_1,n_2).
\end{equation}
We denote by $\widetilde{\cA}(a_1,a_2,c_2,s)$ the same expression with $\Psi$ replaced by $\widetilde{\Psi}$.

\begin{theorem}[Mochizuki] \label{mocthm}
Let $S$ be a smooth projective surface with $b_1(S) = 0$ and $p_g(S) >0$. Let $H, c_1,c_2$ be chosen such that there exist no rank 2 strictly Gieseker $H$-semistable sheaves with Chern classes $c_1,c_2$. Suppose the universal sheaf $\E$ exists on $M_{S}^{H}(2,c_1,c_2) \times S$. Suppose the following conditions hold:
\begin{enumerate}
\item[(i)] $\chi(\ch) > 0$, where $\chi(\ch) := \int_S \ch \cdot \td(S)$ and $\ch = (2,c_1,\frac{1}{2}c_1^2 - c_2)$.
\item[(ii)] $p_{\ch} > p_{K_S}$, where $p_{\ch}$ and $p_{K_S}$ are the reduced Hilbert polynomials associated to $\ch$ and $K_S$.
\item[(iii)] For all Seiberg-Witten basic classes $a_1$ satisfying $a_1 H \leq (c_1 -a_1) H$ the inequality is strict. 
\end{enumerate}
Let $P(\E)$ be any polynomial in descendent insertions, which arises from a polynomial in Chern numbers of $T^\vir$ (e.g.~such as in Proposition \ref{chiyinsert}). Then
$$
\int_{[M_{S}^{H}(2,c_1,c_2)]^{\vir}} P(\E) = -2^{1-\chi(\ch)} \sum_{{\scriptsize{\begin{array}{c} c_1 = a_1 + a_2 \\ a_1 H < a_2 H \end{array}}}} \SW(a_1) \, \cA(a_1,a_2,c_2).
$$
\end{theorem}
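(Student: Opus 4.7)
The plan is to follow Mochizuki's geometric wall-crossing strategy, which interpolates between the moduli space $M = M_{S}^{H}(2,c_{1},c_{2})$ and the empty moduli space via the moduli of Bradlow pairs. Concretely, for a stability parameter $\alpha \in \mathbb{Q}_{>0}$, let $M^{\alpha}(c_{1},c_{2})$ parametrize pairs $(E,\phi)$ with $E$ a rank 2 torsion-free sheaf of Chern classes $c_{1},c_{2}$ and $\phi \colon \mathcal{O}_{S} \to E$, subject to the $\alpha$-twisted Gieseker stability condition. For $\alpha \gg 0$ the moduli space is empty (hypothesis (i) ensures this by bounding $\chi(\ch)$), while for $\alpha$ small and positive the forgetful map $M^{0^{+}} \to M$ is a projective bundle whose fibers can be integrated out to convert $P(\E)$-invariants on $M$ into invariants on $M^{0^{+}}$. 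Summing the wall-crossing contributions as $\alpha$ decreases from $\infty$ to $0^{+}$ will thus express the left hand side as a sum over walls.

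On each wall $\alpha = \alpha_{W}$, an $S$-equivalence phenomenon appears: strictly $\alpha_{W}$-semistable pairs admit a Jordan-H\"older-type decomposition $E = I_{Z_{1}}(a_{1}) \oplus I_{Z_{2}}(a_{2})$ with $c_{1} = a_{1}+a_{2}$, and the section $\phi$ factors through the summand containing $a_{1}$. The next step is to construct Mochizuki's master space, a projective Deligne-Mumford stack equipped with a $\mathbb{C}^{*}$-action whose quotient compares $M^{\alpha_{W}-\epsilon}$ and $M^{\alpha_{W}+\epsilon}$. I would equip it with a perfect obstruction theory compatible with those of the two adjacent Bradlow moduli spaces and identify its $\mathbb{C}^{*}$-fixed locus. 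Besides the two ``trivial'' components corresponding to the two sides of the wall, the remaining fixed components are products $S^{[n_{1}]} \times S^{[n_{2}]}$ (with $n_{1}+n_{2} = c_{2}-a_{1}a_{2}$) indexed by the splittings $c_{1} = a_{1}+a_{2}$ appearing on the wall. The decoration by $\mathfrak{s}$ records which summand carries the section $\phi$.

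Applying Graber-Pandharipande virtual $\mathbb{C}^{*}$-localization to the master space yields that the difference of integrals on the two sides equals a sum of residues at the ``extra'' fixed components. A direct computation of the virtual normal bundle in terms of $R\hom_{\pi}(\I_{i}(a_{i}), \I_{j}(a_{j}))$ together with the obstruction contribution $\Eu(\mathcal{O}(a_{i})^{[n_{i}]})$ coming from the section $\phi$ identifies the localization contribution precisely with $\Psi(a_{1},a_{2},n_{1},n_{2})$ of \eqref{Psi}; the operator $\Coeff_{s^{0}}$ implements the residue and the prefactor $(2s)^{n_{1}+n_{2}-\chi(\mathcal{O}_{S})}$ accounts for the virtual dimensions. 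Summing over all walls and over $n_{1}+n_{2}$ produces $\cA(a_{1},a_{2},c_{2})$, and the Seiberg-Witten invariant $\SW(a_{1})$ enters because the locus of pairs with $E$ containing a fixed line bundle $\mathcal{O}_{S}(a_{1})$ is, after a deformation argument using $p_{g}(S)>0$, controlled by the Seiberg-Witten moduli in class $a_{1}$. The asymmetry condition $a_{1}H < a_{2}H$ records which side of the wall the section lies on; hypotheses (ii) and (iii) ensure respectively that only walls with $a_{1}H \leq a_{2}H$ occur and that the inequality may be taken strict (so no boundary wall needs special treatment).

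The main obstacle is the master-space construction together with the verification that its $\mathbb{C}^{*}$-equivariant perfect obstruction theory restricts, on the extra fixed components, to exactly the expression appearing inside \eqref{Psi}; this is a delicate computation because one must disentangle the trace-free obstruction theory of \eqref{Tvir} into Hom-complexes between $\I_{1}(a_{1})$ and $\I_{2}(a_{2})$ and identify the section contribution as $\Eu(\mathcal{O}(a_{i})^{[n_{i}]})$. A secondary technical point is that $P(\E)$ is only a formal polynomial expression in $\ch_{k}(\E)$, so one must check that Mochizuki's formula is compatible with the tautological replacement $\E \leadsto \I_{1}(a_{1}) \otimes \mathfrak{s}^{-1} \oplus \I_{2}(a_{2}) \otimes \mathfrak{s}$ on the fixed locus; this follows from the multiplicativity of Chern characters under the $\C^*$-fixed decomposition of $\E$, but must be set up carefully. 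Finally, the global factor $-2^{1-\chi(\ch)}$ is obtained by tracking the ranks and normalizations appearing in virtual localization together with the $\frac{1}{2}:1$ \'etale morphism from the oriented moduli stack to $M$ (as in the footnote).
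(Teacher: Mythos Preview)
The paper does not prove this theorem: it is quoted from Mochizuki's book \cite[Thm.~1.4.6]{Moc}, and the paper only provides a one-paragraph informal summary of Mochizuki's strategy (Bradlow pairs with stability parameter $\alpha$, master space with $\C^*$-action, Graber--Pandharipande virtual localization on the master space producing the $S^{[n_1]}\times S^{[n_2]}$ integrals and Seiberg--Witten contributions). Your proposal is a more detailed outline of exactly this same strategy, so it is fully consistent with the paper's treatment; there is simply no in-paper proof to compare against beyond that sketch.

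One small inaccuracy: you attribute hypothesis~(i), $\chi(\ch)>0$, to the emptiness of $M^{\alpha}$ for $\alpha\gg 0$. In Mochizuki's argument the emptiness for large $\alpha$ is automatic from the stability condition; the positivity $\chi(\ch)>0$ is instead what makes the forgetful map $M^{0^+}\to M$ a nontrivial projective bundle (fibres $\PP(H^0(E))$), so that invariants on $M$ can be recovered from invariants on $M^{0^+}$. This does not affect the overall correctness of your outline.
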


\begin{remark} \label{univexists}
The assumption that $\E$ exists on $M \times S$, where $M:=M_{S}^{H}(2,c_1,c_2)$, is unnecessary. As remarked in the introduction, $T^{\vir} = -R\hom_{\pi}(\E,\E)_0$ always exists globally so the left-hand side of Mochizuki's formula always makes sense. Moreover, Mochizuki \cite{Moc} works over the Deligne-Mumford stack of oriented sheaves,
which always has a universal sheaf. This can be used to show that global existence of $\E$ on $M \times S$ can be dropped from the assumptions. In fact, when working on the stack, $P$ can be \emph{any} polynomial in descendent insertions defined using the universal sheaf on the stack.
\end{remark}

\begin{remark}
Mochizuki's formula (Theorem \ref{mocthm}) holds without the assumption that there are no strictly $H$-Gieseker semistable sheaves and without assumption (iii). Then $\int_{[M]^{\vir}} P(\E)$ is defined via the moduli space of oriented reduced Bradlow pairs \cite[Def.~7.3.2]{Moc}. In this more general setup the definition of $\cA(a_1,a_2,c_2)$ has to be modified slightly: when $a_1 H = a_2 H$ the sum in \eqref{cA} is over all $n_1>n_2$ satisfying $n_1 + n_2 = c_2 - a_1 a_2$. 
\end{remark}
\begin{remark} \label{assumpmocthm}
It is conjectured in \cite{GNY3} that assumptions (ii) and (iii) can be dropped from Theorem \ref{mocthm} and the sum can be replaced by a sum over \emph{all} Seiberg-Witten basic classes. We will see in our computations that assumption (i) is necessary. 
\end{remark}
\begin{remark} \label{Mochcor}
Let the setup be as in Theorem \ref{mocthm}. Then 
$$
{\chi}_{-y}^\vir(M_{S}^{H}(2,c_1,c_2)) = -2^{1-\chi(\ch)}  \sum_{{\scriptsize{\begin{array}{c} c_1 = a_1 + a_2 \\ a_1 H < a_2 H \end{array}}}} \SW(a_1) \, \cA(a_1,a_2,c_2),
$$
where $P(\E)$ is determined by the following expression
$$
P(\E) := \sfT_{-y}(-R\hom_{\pi}(\E,\E)_0,1-y)
$$ 
with $\E = \I_1(a_1) \otimes \s^{-1} \oplus \I_2(a_2) \otimes \s$. We note that the rank of 
$$
-R\hom_{\pi}(\I_1(a_1) \otimes \s^{-1} \oplus \I_2(a_2) \otimes \s,\I_1(a_1) \otimes \s^{-1} \oplus \I_2(a_2) \otimes \s)_0
$$ 
is given by 
\begin{align*}
&\chi(\O_S)+(2n_1+2n_2 - 2\chi(\O_S)) + (n_1+n_2-\chi(\O(a_2-a_1))) \\
&\qquad \qquad \qquad \qquad \qquad \qquad \quad \ \ \! + (n_1+n_2-\chi(\O(a_1-a_2))) \\
&= 4c_2-c_1^2-3\chi(\O_S),
\end{align*}
which equals the rank of $T^\vir$ given by \eqref{vd}.
\end{remark}

\section{Universality} \label{secuniv}

In this section $S$ is \emph{any} smooth projective surface. We start with a well-known lemma, which we include for completeness.
\begin{lemma} \label{lem1}
Let $\pi : S^{[n]} \times S \rightarrow S^{[n]}$ denote the projection. Then
$$
-R\hom_\pi(\I,\I)_0 \cong \ext_{\pi}^{1}(\I,\I)_0 \cong T_{S^{[n]}},
$$
where $\I$ denotes the universal ideal sheaf and $T_{S^{[n]}}$ denotes the tangent bundle.
\end{lemma}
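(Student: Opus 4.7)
The plan is to combine a fiberwise vanishing computation for the trace-free $\Ext$-groups with cohomology-and-base-change, and then to identify the resulting locally free sheaf with $T_{S^{[n]}}$ via deformation theory of the Hilbert scheme.

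\emph{Step 1 (fiberwise vanishing).} I would first compute $\Ext^i(\I_Z,\I_Z)_0$ at each closed point $[Z]\in S^{[n]}$. Since $\I_Z$ is torsion-free of rank one on a connected surface it is simple, so $\Hom(\I_Z,\I_Z)=\C\cdot\mathrm{id}$ and hence $\Hom(\I_Z,\I_Z)_0=0$. To kill $\Ext^2$, I would first establish the sheaf identity $\hom(\I_Z,\I_Z)\cong\O_S$: applying $\hom(-,\O_S)$ to $0\to\I_Z\to\O_S\to\O_Z\to 0$ and using $\hom(\O_Z,\O_S)=0$ yields $\hom(\I_Z,\O_S)\cong\O_S$, and then $\hom(\I_Z,\I_Z)\hookrightarrow\hom(\I_Z,\O_S)=\O_S$ is an ideal subsheaf containing the identity section, hence all of $\O_S$. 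Tensoring with $K_S$ gives $\Hom(\I_Z,\I_Z\otimes K_S)=H^0(K_S)$, with the trace map the identity, so the trace-free summand vanishes. By Serre duality on $S$ this forces $\Ext^2(\I_Z,\I_Z)_0\cong\Hom(\I_Z,\I_Z\otimes K_S)_0^{\vee}=0$.

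\emph{Step 2 (cohomology and base change).} Since $\I$ is flat over $S^{[n]}$, derived base change identifies the fibre of $R\pi_*R\hom(\I,\I)_0$ at $[Z]$ with $R\Hom(\I_Z,\I_Z)_0$. By Step 1 this is concentrated in degree one, and its dimension is independent of $[Z]$: by Hirzebruch--Riemann--Roch $\chi(\I_Z,\I_Z)=\chi(\O_S)-2n$, and the splitting of the trace (using $\rk\I_Z=1$) gives $\chi(\I_Z,\I_Z)_0=-2n$. Standard cohomology-and-base-change then yields a quasi-isomorphism $R\pi_*R\hom(\I,\I)_0\simeq \ext^1_\pi(\I,\I)_0[-1]$ with $\ext^1_\pi(\I,\I)_0$ locally free of rank $2n$. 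This establishes the first ``isomorphism'' in the statement, the minus sign on the left absorbing the shift in the $K$-theoretic sense.

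\emph{Step 3 (identification with the tangent bundle).} The Atiyah class of $\I$ induces the Kodaira--Spencer map $T_{S^{[n]}}\to\ext^1_\pi(\I,\I)_0$. By Fogarty's theorem $S^{[n]}$ is smooth of dimension $2n$, so the source is locally free of rank $2n$. At each $[Z]$ this map restricts to the canonical identification of Hilbert-scheme deformation theory $T_{[Z]}S^{[n]}\cong\Hom(\I_Z,\O_Z)\cong\Ext^1(\I_Z,\I_Z)_0$. Being a map between locally free sheaves of the same rank that is a fiberwise isomorphism, it is an isomorphism of sheaves.

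The only nontrivial input is really the sheaf-level identity $\hom(\I_Z,\I_Z)\cong\O_S$ that drives Step 1; everything else is a formal application of base change together with the standard deformation theory of $S^{[n]}$.
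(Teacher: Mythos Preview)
Your proof is correct and follows essentially the same approach as the paper: both kill $\Hom(\I_Z,\I_Z)_0$ by simplicity and kill $\Ext^2(\I_Z,\I_Z)_0$ via Serre duality by showing $H^0(K_S)\to\Hom(\I_Z,\I_Z\otimes K_S)$ is an isomorphism using the short exact sequence $0\to\I_Z\to\O_S\to\O_Z\to 0$. The paper takes the identification $\ext^1_\pi(\I,\I)_0\cong T_{S^{[n]}}$ as known and omits your Steps~2 and~3, while you establish the sheaf identity $\hom(\I_Z,\I_Z)\cong\O_S$ first and then globalize---a minor reorganization, not a different argument.
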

\begin{proof}
Since $\ext_{\pi}^{1}(\I,\I)_0 \cong T_{S^{[n]}}$, it suffices to show that for any ideal sheaf $I  = I_Z\subset \O_S$ we have $\Hom(I,I)_0 = \Ext^2(I,I)_0 = 0$. Clearly $\Hom(I,I)_0 = 0$ because $I$ is simple. 

Next we consider the trace map
$$
\Ext^2(I,I) \rightarrow H^2(\O_S).
$$
First applying $- \otimes K_S$ and then $\Hom(I,\cdot)$ to  
\begin{equation} \label{ses}
0 \rightarrow I \rightarrow \O_S \rightarrow \O_Z \rightarrow 0
\end{equation}
gives a long exact sequence
$$
0 \rightarrow \Hom(I,I \otimes K_S) \rightarrow \Hom(I,K_S) \rightarrow \Hom(I,K_S|_Z) \rightarrow \cdots.
$$
The natural map $H^0(K_S) \rightarrow \Hom(I,I \otimes K_S)$ is Serre dual to the trace map, so our goal is to show that this map is an isomorphism. It is enough to show that the restriction map $H^0(K_S) \rightarrow \Hom(I,K_S)$, which factors through $\Hom(I,I \otimes K_S)$, is an isomorphism. This in turn can be seen by applying $\Hom(\cdot, K_S)$ to \eqref{ses} and using that $Z$ is 0-dimensional.
\end{proof}

Our main object of study is the following generating function.
\begin{definition} \label{defZX}
For any $a$ in the Chow group  $A^1(S)$ we abbreviate $\chi(a) := \chi(\O(a))$. For any $a_1, c_1 \in A^1(S)$, we define 
\begin{align*} 
&\sfZ_{S}(a_1,c_1,s,y, q) := \\
&\sum_{n_1, n_2 \geq 0} q^{n_1+n_2} \int_{S^{[n_1]} \times S^{[n_2]}} \frac{\sfT^{\C^*}_{-y}(E_{n_1,n_2},1-y) \, \Eu(\O(a_1)^{[n_1]}) \, \Eu(\O(c_1-a_1)^{[n_2]} \otimes \s^2)}{\Eu(E_{n_1,n_2} - \pi_1^* \, T_{S^{[n_1]}} - \pi_2^* \, T_{S^{[n_2]}})}, 
\end{align*}
where $\sfT^{\C^*}$ denotes the $\C^*$-equivariant analog of \eqref{defT} and
\begin{align*}
E_{n_1,n_2} := -R\hom_{\pi}(\E,\E)_0+\chi(\O_S) \otimes \O+ \chi(c_1-2a_1) \otimes \O \otimes \s^2+ \chi(2a_1-c_1) \otimes \O \otimes \s^{-2}
\end{align*}
with $$\E = \I_1(a_1) \otimes \s^{-1} \oplus \I_2(a_2) \otimes \s.$$ This can be rewritten as
\begin{align*}
E_{n_1,n_2} 
= \pi_1^* \, T_{S^{[n_1]}} + \pi_2^* \, T_{S^{[n_2]}} &+ \O(c_1-2a_1)^{[n_2]} \otimes \s^2 + \O(2a_1-c_1 + K_S)^{[n_1] *} \otimes \s^2 \\
&- R\hom_{\pi}(\O_{\cZ_1}(a_1),\O_{\cZ_2}(c_1-a_1)) \otimes \s^2 \\
&+ \O(2a_1-c_1)^{[n_1]} \otimes \s^{-2} + \O(c_1-2a_1+K_S)^{[n_2] *} \otimes \s^{-2} \\
&- R\hom_{\pi}(\O_{\cZ_2}(c_1-a_1),\O_{\cZ_1}(a_1)) \otimes \s^{-2}.
\end{align*}
Here we used Serre duality and $(\cdot)^*$ denotes the dual vector bundle. The complex $E_{n_1,n_2}$ has rank $4n_1+4n_2$. Note that
\begin{equation} \label{leadingcoeff}
\sfZ_{S}(a_1,c_1,s,y,q) \in 1 + q \, \Q[y](\!(s)\!)[[q]]. 
\end{equation}
For $y=1$, using the $\C^*$-equivariant version of \eqref{specialize}, we obtain
$$
\sfT^{\C^*}_{-y}(E_{n_1,n_2},1-y) \Big|_{y=1} = c^{\C^*}(E_{n_1,n_2}).
$$
\end{definition}

Suppose we have a decomposition $c_1 = a_1 + a_2$. Then Remark \ref{Mochcor} implies
\begin{align} 
\begin{split} \label{keyexpr}
&\sum_{c_2 \in \Z} \widetilde{\cA}(a_1,c_1-a_1,c_2,s) \, q^{c_2} =  \sfZ_{S}\Big(a_1,c_1,s,y,\frac{q}{2s}\Big) \\
&\times (2s)^{\chi(\O_S)} \Bigg( \frac{1-e^{-2s(1-y)}}{1-y e^{-2s(1-y)}} \Bigg)^{\chi(c_1-2a_1)} \Bigg( \frac{1-e^{2s(1-y)}}{1-y e^{2s(1-y)}} \Bigg)^{\chi(2a_1-c_1)} q^{a_1 (c_1-a_1)}.
\end{split}
\end{align}
Besides Remark \ref{Mochcor}, this equality uses the following facts: Lemma \ref{lem1}, equation \eqref{Tprod}, equation \eqref{Tlinebd}, and equation \eqref{trivialshift}. For $y=1$ the second line of \eqref{keyexpr} simplifies to 
$$
(2s)^{\chi(\O_S)} \Bigg( \frac{2s}{1+2s} \Bigg)^{\chi(c_1-2a_1)} \Bigg( \frac{-2s}{1-2s} \Bigg)^{\chi(2a_1-c_1)} q^{a_1 (c_1-a_1)}.
$$

\begin{proposition} \label{univ}
There exist universal functions 
$$
A_1(s,y,q), \ldots, A_7(s,y,q) \in 1 + q \, \Q[y](\!(s)\!)[[q]]
$$ 
such that for any smooth projective surface $S$ and $a_1, c_1 \in A^1(S)$ we have
$$
\sfZ_{S}(a_1,c_1,s,y,q) = A_1^{a_1^2} \, A_2^{a_1 c_1} \, A_3^{c_1^2} \, A_4^{a_1 K_S} \, A_5^{c_1 K_S} \, A_6^{K_S^2} \, A_7^{\chi(\O_S)}.
$$
\end{proposition}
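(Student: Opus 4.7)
The plan is to combine two standard ingredients: (a) express the integrand on $S^{[n_1]} \times S^{[n_2]}$ as a universal polynomial in Chern classes of tautological bundles associated to the line bundles $\O(a_1)$, $\O(c_1-a_1)$, $K_S$, and to the universal subschemes $\cZ_1, \cZ_2$; then (b) invoke an Ellingsrud--G\"ottsche--Lehn style argument to conclude that the resulting generating function factors as a product of seven universal series with exponents given by the seven Chern numbers $a_1^2, a_1 c_1, c_1^2, a_1 K_S, c_1 K_S, K_S^2, \chi(\O_S)$.

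For step (a), I would unravel every piece of $E_{n_1,n_2}$ appearing in Definition~\ref{defZX}. The tangent bundles $T_{S^{[n_i]}}$ and the tautological bundles $\O(a_1)^{[n_1]}$, $\O(c_1-a_1)^{[n_2]}$, $\O(2a_1-c_1+K_S)^{[n_1]}$, $\O(c_1-2a_1+K_S)^{[n_2]}$ are defined directly in terms of the universal subschemes. For the cross-term $R\hom_\pi(\O_{\cZ_1}(a_1), \O_{\cZ_2}(c_1-a_1))$ on $S^{[n_1]} \times S^{[n_2]}$, one uses the exact triangles coming from $0 \to \I_i \to \O \to \O_{\cZ_i} \to 0$ together with the K\"unneth decomposition $\Delta_S = \sum_i \theta_1^{(i)} \otimes \theta_2^{(4-i)}$ of the diagonal to rewrite Grothendieck--Riemann--Roch applied to this $R\hom_\pi$ as a polynomial in Chern classes of the tautological bundles listed above, together with the integers $a_1 c_1$, $a_1^2$, $c_1^2$, $c_1 K_S$, $a_1 K_S$, $K_S^2$, $\chi(\O_S)$ that arise when pairing with components of $\td(S)$.

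For step (b), once the integrand is presented as a universal polynomial in Chern characters of tautological sheaves, the theorem of Ellingsrud--G\"ottsche--Lehn (applied on the product $S^{[n_1]} \times S^{[n_2]}$ as in \cite{GNY1, GNY2, GNY3}) ensures that each coefficient of $s^i q^{n_1+n_2}$ in $\sfZ_S(a_1,c_1,s,y,q)$ is a polynomial in the seven numbers $a_1^2, a_1 c_1, c_1^2, a_1 K_S, c_1 K_S, K_S^2, \chi(\O_S)$. To upgrade "polynomial" to "linear" (which is exactly what the product formula encodes), I would use the standard multiplicativity trick: the construction is compatible with disjoint unions in the sense that for $S = S' \sqcup S''$ with $a_1, c_1$ restricting to $a_1', c_1'$ and $a_1'', c_1''$, one has $S^{[n]} = \bigsqcup_{n'+n''=n} S'^{[n']} \times S''^{[n'']}$, the tautological and cross-Hilbert contributions split as exterior products, and all seven Chern numbers are additive. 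Hence $\log \sfZ_S$ is additive under disjoint union. Combined with the polynomial dependence on Chern numbers (which are themselves additive), this forces $\log \sfZ_S$ to be linear in the seven invariants, yielding the seven universal series $A_1,\ldots,A_7$. The normalization \eqref{leadingcoeff} guarantees each $A_i \in 1 + q\,\Q[y](\!(s)\!)[[q]]$.

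The main obstacle I anticipate is the careful bookkeeping in step (a) for the cross-term $R\hom_\pi(\O_{\cZ_1}(a_1),\O_{\cZ_2}(c_1-a_1))$: unlike the single-Hilbert-scheme pieces, this object couples the two Hilbert scheme factors through the diagonal of $S$, and one must verify that after Grothendieck--Riemann--Roch its Chern classes still land in the algebra generated by pullbacks of tautological Chern classes from the two factors, with coefficients that are polynomials in the seven numerical invariants. Once this is accomplished, the remaining argument is a formal application of the EGL multiplicativity principle, and no further surface-specific input is needed; the universal functions $A_i$ are then pinned down by computing $\sfZ_S$ on a spanning set of pairs $(S, a_1, c_1)$, which is the content of the subsequent sections.
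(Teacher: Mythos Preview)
Your proposal is correct and follows essentially the same two-step structure as the paper's proof: first an EGL-style universality argument (the paper cites \cite[Lem.~5.5]{GNY1}) showing each integral is a polynomial in the seven Chern numbers, then multiplicativity under disjoint union to pass from polynomial to linear dependence of $\log \sfZ_S$. The paper resolves your anticipated obstacle with the cross-term by viewing $S^{[n_1]}\times S^{[n_2]}$ as a component of $S_2^{[n]}$ for $S_2 := S\sqcup S$, so that $R\hom_\pi(\I_1(a_1),\I_2(c_1-a_1))$ is a tautological object on a single Hilbert scheme to which the EGL machinery applies directly, avoiding the explicit K\"unneth/GRR bookkeeping you outline.
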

\begin{proof}
\noindent \textbf{Step 1: Universality.} For any smooth projective surface $S$, define $S_2 := S \sqcup S$ and denote by $S_2^{[n]}$ the Hilbert scheme of $n$ points on $S_2$. Then
$$
S_2^{[n]} = \bigsqcup_{n_1+n_2=n} S^{[n_1]} \times S^{[n_2]}.
$$
We endow $S_2^{[n]}$ with trivial $\C^*$-action. Denote by $\I_1$, $\I_2$ the sheaves on $S_2^{[n]} \times S$ whose restrictions to $S^{[n_1]} \times S^{[n_2]} \times S$ are the sheaves $\I_1$, $\I_2$ pulled back along projection to $S^{[n_1]} \times S$, $S^{[n_2]} \times S$ respectively. Let $L(a_1,c_1)$ be the vector bundle whose restriction to $S^{[n_1]} \times S^{[n_2]}$ is
$$
\pi_1^* \O(a_1)^{[n_1]} \oplus \pi_2^*\O(c_1-a_1)^{[n_2]} \otimes \s^2.
$$ 
Let $X$ be any rational function in the following list of Chern classes and with coefficients in $\Q[y](\!(s)\!)$
\begin{align*}
&c_{i_1}^{\C^*}(R\hom_{\pi}(\I_1(a_1),\I_2(c_1-a_1)) \otimes \s^2), \\
&c_{i_2}^{\C^*}(R\hom_{\pi}(\I_2(c_1-a_1),\I_1(a_1)) \otimes \s^{-2}), \\ 
&c_{i_3}(T_{S^{[n]}_2}), \ c_{i_4}^{\C^*}(L(a_1,c_1)).
\end{align*}
Of course we assume that only $\C^*$-moving terms appear in the denominator of $X$. Then there exists a polynomial $Y$ in $a_1^2$, $a_1 c_1$, $c_1^2$, $a_1 K_S$, $c_1 K_S$, $K_S^2$, $\chi(\O_S)$ with coefficients in $\Q[y](\!(s)\!)$ such that
$$
\int_{S_2^{[n]}} X = Y, 
$$
for any smooth projective surface $S$ and $a_1, c_1 \in A^1(S)$. This is essentially \cite[Lem.~5.5]{GNY1}, which in turn is an adaptation of \cite{EGL}. 

We conclude that for each $n \geq 0$, $i \in \Z$, there exists a universal polynomial $Y_{n,i}$ in $a_1^2$, $a_1 c_1$, $c_1^2$, $a_1 K_S$, $c_1 K_S$, $K_S^2$, $\chi(\O_S)$ such that 
$$
\sfZ_{S}(a_1,c_1,s,y,q) = \sum_{n \geq 0} \sum_{i} Y_{n,i} s^i q^n,
$$
for all $S,a_1,c_1$. The coefficient of $q^0$ is 1 by \eqref{leadingcoeff}. Hence there exists a universal power series
$$
G \in \Q[y,x_1, \ldots, x_7](\!(s)\!)[[q]],
$$
such that
\begin{equation} \label{expG}
\sfZ_{S}(a_1,c_1,s,y,q) = \exp \, G(a_1^2, a_1 c_1, c_1^2, a_1 K_S, c_1 K_S, K_S^2, \chi(\O_S)) 
\end{equation}
for all $S,a_1,c_1$. \\

\noindent \textbf{Step 2: Multiplicativity.} Let $S = S' \sqcup S''$, where $S', S''$ are not necessarily connected smooth projective surfaces. Let $a_1, c_1 \in A^1(S)$ be such that
$$
a_1|_{S'} = a_1', \ c_1|_{S'} = c_1', \ a_1|_{S''} = a_1'', \ c_1|_{S''} = c_1''.
$$ 
Then
$$
S_{2}^{[n]} = \bigsqcup_{n_1+n_2 = n} S_{2}^{\prime [n_1]} \times S_{2}^{\prime \prime [n_2]}.
$$
Let $\I_1 = \I_1' \boxplus \I_1''$, $\I_2 = \I_2' \boxplus \I_2''$. As observed in \cite{GNY1}, we then have
\begin{align*}
&R \hom_\pi (\I_1(a_1), \I_2(c_1-a_1)) = \\
&R \hom_\pi (\I_1'(a_1'), \I_2'(c_1'-a_1')) \boxplus R \hom_\pi (\I_1''(a_1''), \I_2''(c_1''-a_1''))
\end{align*}
and similarly with $\I_1$ and $\I_2$ interchanged. Furthermore
\begin{align*}
T_{S_{2}^{[n]}} &= \bigoplus_{n_1+n_2=n} T_{S_{2}^{\prime [n_1]}} \boxplus T_{S_{2}^{\prime \prime [n_2]}} \\
L(a_1,c_1) &= L(a_1',c_1') \boxplus L(a_1'',c_1''), \\
\chi(c_1-2a_1) &= \chi(c_1'-2a_1') + \chi(c_1''-2a_1''), \\
\chi(2a_1-c_1) &= \chi(2a_1'-c_1') + \chi(2a_1''-c_1''), \\
a_1 (c_1-a_1) &= a_1' (c_1' - a_1') + a_1'' (c_1''-a_1'').
\end{align*}
Note that $\Eu(\cdot)$ and $\sfT^{\C^*}_{-y}(\cdot,1-y)$ are both group homomorphisms from $(K^{0}_{\C^*}(M),+)$ to $(A^*(M)_{\Q}[y](\!(s)\!),\cdot)$, where ``$\C^*$'' stands for ``$\C^*$-equivariant''. For $\sfT^{\C^*}_{-y}(\cdot,1-y)$ this follows from the crucial multiplicative property \eqref{Tprod}. Therefore
\begin{equation} \label{mult}
\sfZ_{S}(a_1,c_1,s,y,q) = \sfZ_{S'}(a_1',c_1',s,q) \, \sfZ_{S''}(a_1'',c_1'',s,y,q).
\end{equation}

We combine \eqref{expG} and \eqref{mult} in order to construct the universal functions $A_i(s,y,q)$. This follows from a cobordism argument similar to \cite[Lem.~5.5]{GNY1}. Take seven triples $(S^{(i)},a_1^{(i)},c_1^{(i)})$ such that the vectors 
$$
w_i := ((a_1^{(i)})^2,a_1^{(i)}c_1^{(i)},(c_1^{(i)})^2,a_1^{(i)}K_{S^{(i)}},c_1^{(i)}K_{S^{(i)}},K_{S^{(i)}}^2,\chi(\O_{S^{(i)}})) \in \Q^{7}
$$
form a $\Q$-basis. Now consider an arbitrary triple $(S,a_1,c_1)$. Then we can decompose $w = (a_1^2, \ldots, \chi(\O_S))$ as
$w = \sum_i n_i w_i$ for some $n_i \in \Q$.
If all $n_i \in \Z_{\geq 0}$, then \eqref{mult} implies 
\begin{equation} \label{intermed}
\sfZ_S(a_1,c_1,s,y,q) = \prod_{i=1}^{7} \big(\exp G(w_i) \big)^{n_i} = \exp \Big( \sum_{i=1}^{7} n_i G(w_i) \Big).
\end{equation}
Denote by $W$ the matrix with column vectors $w_1, \ldots, w_{7}$ and let $M = (m_{ij})$ be its inverse. We define 
$$
A_j(s,y,q) := \exp\Big(\sum_i m_{ij} G(w_i)\Big), \quad \forall j=1, \ldots, 7.
$$ 
From \eqref{intermed} we obtain
\begin{align} 
\begin{split} \label{ZA}
\sfZ_S(a_1,c_1,s,y,q) = 
A_1^{a_1^2} \, A_2^{a_1 c_1} \, A_3^{c_1^2} \, A_4^{a_1 K_S} \, A_5^{c_1 K_S} \, A_6^{K_S^2} \, A_7^{\chi(\O_S)}.
\end{split}
\end{align}
Since the points $w = \sum_i n_i w_i$, with $n_i \in \Z_{\geq 0}$, lie Zariski dense in $\Q^{7}$, we conclude that \eqref{ZA} holds for \emph{all} triples $(S,a_1,c_1)$.
\end{proof}

For a $7$-tuple $\underline \alpha=(\alpha_1,\alpha_2,\alpha_3,\alpha_4,\alpha_5,\alpha_6,\alpha_7) \in \Z^7$ we denote
\begin{align}
\begin{split} \label{defsfA}
\sfA_{\underline \alpha }(s,y,q)
:=&- 2 \Bigg( 2^{-1} \Bigg( \frac{1-e^{-2s(1-y)}}{1-y e^{-2s(1-y)}} \Bigg)^{2} \Bigg( \frac{1-e^{2s(1-y)}}{1-y e^{2s(1-y)}} \Bigg)^{2} q^{-1} A_1(s,y,q/s) \Bigg)^{\alpha_1}   \\
&\times \Bigg( 2 \Bigg( \frac{1-e^{-2s(1-y)}}{1-y e^{-2s(1-y)}} \Bigg)^{-2} \Bigg( \frac{1-e^{2s(1-y)}}{1-y e^{2s(1-y)}} \Bigg)^{-2} q A_2(s,y,q/s)  \Bigg)^{\alpha_2}   \\
&\times \Bigg( 2^{-\frac{1}{2}} \Bigg( \frac{1-e^{-2s(1-y)}}{1-y e^{-2s(1-y)}} \Bigg)^{\frac{1}{2}} \Bigg( \frac{1-e^{2s(1-y)}}{1-y e^{2s(1-y)}}\Bigg)^{\frac{1}{2}} A_3(s,y,q/s)  \Bigg)^{\alpha_3} \\
&\times \Bigg(  \Bigg( \frac{1-e^{-2s(1-y)}}{1-y e^{-2s(1-y)}} \Bigg) \Bigg( \frac{1-e^{2s(1-y)}}{1-y e^{2s(1-y)}} \Bigg)^{-1} A_4(s,y,q/s)  \Bigg)^{\alpha_4}    \\
&\times \Bigg( 2^{\frac{1}{2}} \Bigg( \frac{1-e^{-2s(1-y)}}{1-y e^{-2s(1-y)}} \Bigg)^{-\frac{1}{2}} \Bigg( \frac{1-e^{2s(1-y)}}{1-y e^{2s(1-y)}} \Bigg)^{\frac{1}{2}} A_5(s,y,q/s)  \Bigg)^{\alpha_5}  \\
&\times A_6(s,y,q/s) ^{\alpha_6}   \\
&\times \Bigg( \frac{s}{2} \Bigg( \frac{1-e^{-2s(1-y)}}{1-y e^{-2s(1-y)}} \Bigg) \Bigg( \frac{1-e^{2s(1-y)}}{1-y e^{2s(1-y)}} \Bigg) A_7(s,y,q/s)  \Bigg)^{\alpha_7}.
\end{split}
\end{align}

\begin{corollary} \label{univcor}
Suppose $S$ satisfies $b_1(S)=0$ and $p_g(S) > 0$. Let $H, c_1, c_2$ be chosen such that there exist no rank 2 strictly Gieseker $H$-semistable sheaves with Chern classes $c_1, c_2$. Assume furthermore that: 
\begin{itemize}
\item[(i)] $c_2 <  \frac{1}{2} c_{1}(c_1-K_S) + 2\chi(\O_S)$.
\item[(ii)] $p_{\ch} > p_{K_S}$, where $p_{\ch}$ and $p_{K_S}$ are the reduced Hilbert polynomials associated to $\ch = (2,c_1,\frac{1}{2}c_1^2-c_2)$ and $K_S$.
\item[(iii)] For all SW basic classes $a_1$ satisfying $a_1 H \leq (c_1 -a_1) H$ the inequality is strict. 
\end{itemize}
Then
$$\chi_{-y}^{\vir}(M_{S}^{H}(2,c_1,c_2))=\Coeff_{s^0 q^{c_2}}
\Big[ \sum_{{\scriptsize{\begin{array}{c} a_1 \in H^2(S,\Z) \\ a_1 H < (c_1-a_1) H \end{array}}}} \SW(a_1) \, 
\sfA_{(a_1^2,a_1 c_1,c_1^2,a_1 K_S, c_1 K_S, K_S^2, \chi(\O_S))}(s,y,q)\Big].$$
\end{corollary}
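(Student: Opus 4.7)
The plan is to reduce the statement to Mochizuki's formula in the packaged form of Remark \ref{Mochcor}, and then recognize the resulting generating series as $\sfA_{\underline{\alpha}}$ via equation \eqref{keyexpr} and the universal factorization of Proposition \ref{univ}. First I check that condition (i) here is equivalent to Mochizuki's hypothesis $\chi(\ch)>0$: Riemann--Roch applied to $\ch = (2, c_1, \frac{1}{2}c_1^2 - c_2)$ gives $\chi(\ch) = 2\chi(\O_S) + \frac{1}{2}c_1(c_1 - K_S) - c_2$, so the two inequalities coincide; conditions (ii) and (iii) agree verbatim with Mochizuki's. Hence Remark \ref{Mochcor} gives, for each fixed $c_2$,
$$
\chi^{\vir}_{-y}(M_S^H(2,c_1,c_2)) = -2^{1-\chi(\ch)} \sum_{a_1} \SW(a_1) \, \cA(a_1, c_1 - a_1, c_2),
$$
where the sum is over Seiberg--Witten basic classes $a_1$ with $a_1 H < (c_1-a_1) H$.

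Next I pass to generating series in $q$. Writing $\cA(a_1, c_1 - a_1, c_2) = \Coeff_{s^0 q^{c_2}}\bigl[\sum_{c_2'} \widetilde{\cA}(a_1, c_1-a_1, c_2', s)\, q^{c_2'}\bigr]$ and using \eqref{keyexpr}, this series equals
$$
\sfZ_S(a_1, c_1, s, y, q/(2s)) \cdot (2s)^{\chi(\O_S)} \cdot f^{\chi(c_1-2a_1)} \cdot g^{\chi(2a_1 - c_1)} \cdot q^{a_1(c_1-a_1)},
$$
where I abbreviate $f = (1-e^{-2s(1-y)})/(1-ye^{-2s(1-y)})$ and $g = (1 - e^{2s(1-y)})/(1-ye^{2s(1-y)})$ for the Todd-type prefactors occurring in \eqref{keyexpr}. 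To make the $A_i$ appear with argument $q/s$ as in \eqref{defsfA}, I substitute $q \mapsto 2q$ inside the coefficient extraction; this multiplies the extracted coefficient by $2^{c_2}$ which exactly absorbs the $c_2$-dependence of the prefactor $-2^{1-\chi(\ch)}$, leaving the $c_2$-independent factor $-2^{1 - 2\chi(\O_S) - \frac{1}{2}c_1(c_1 - K_S)}$. Proposition \ref{univ} then factors $\sfZ_S$ as $\prod_{i=1}^7 A_i(s,y,q/s)^{\alpha_i}$ with $\underline{\alpha}=(a_1^2, a_1c_1, c_1^2, a_1K_S, c_1K_S, K_S^2, \chi(\O_S))$.

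What remains is to distribute the residual prefactors $-2^{1-2\chi(\O_S) - \frac{1}{2}(c_1^2 - c_1 K_S)}$, $(2s)^{\chi(\O_S)}$, $f^{\chi(c_1-2a_1)}$, $g^{\chi(2a_1-c_1)}$, $(2q)^{a_1(c_1-a_1)}$ across the seven base factors multiplying $A_i^{\alpha_i}$ in \eqref{defsfA}. Expanding $\chi(c_1 - 2a_1)$ and $\chi(2a_1 - c_1)$ by Riemann--Roch as $\Z$-linear combinations of $\chi(\O_S), a_1^2, a_1c_1, c_1^2, a_1K_S, c_1K_S$, together with $c_1(c_1-K_S) = c_1^2 - c_1 K_S$, one reads off that each prefactor contributes exactly the power of $2$, $s$, $q$, $f$, $g$ predicted by \eqref{defsfA} in each of the seven slots, with the overall sign accounted for by the leading $-2$. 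The main obstacle is this bookkeeping step: with seven exponents and half-integer contributions of $f$ and $g$ distributed over the slots for $\alpha_3$ and $\alpha_5$, it is easy to miscount, but once the Riemann--Roch identities for $\chi(c_1 - 2a_1) \pm \chi(2a_1 - c_1)$ are written down the matching with \eqref{defsfA} is forced, yielding the claimed formula.
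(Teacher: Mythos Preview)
Your proof is correct and follows exactly the route sketched in the paper's one-line proof (``This follows from Remark~\ref{Mochcor}, equation~\eqref{keyexpr}, and Proposition~\ref{univ}''), just with the bookkeeping spelled out in detail. The verification that condition~(i) is the Riemann--Roch restatement of $\chi(\ch)>0$, the substitution $q\mapsto 2q$ to trade the $c_2$-dependent factor $2^{c_2}$ for the argument $q/s$ in the $A_i$, and the slot-by-slot matching of the prefactors $2,s,q,f,g$ against \eqref{defsfA} via Riemann--Roch for $\chi(c_1-2a_1)$ and $\chi(2a_1-c_1)$ are all correct.
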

\begin{proof}
This follows from Remark \ref{Mochcor}, equation \eqref{keyexpr}, and Proposition \ref{univ}. 
\end{proof}
\begin{remark} \label{strongform}
By Remark \ref{assumpmocthm}, we conjecture that this corollary holds without assuming (ii) and (iii) and that the sum can be replaced by a sum over \emph{all} Seiberg-Witten basic classes. We refer to this as ``the strong form of Mochizuki's formula''.
\end{remark}

\section{Toric calculation} \label{toricsec}

The universal functions $A_1, \ldots, A_7$ are entirely determined by the generating function $\sfZ_S(a_1,c_1,s,y,q)$ in the following cases
\begin{align*}
(S,a_1,c_1) = \, &(\PP^2,\O,\O), (\PP^2,\O(1),\O(1)), (\PP^2,\O,\O(1)), (\PP^2,\O(1),\O(2)), \\
&(\PP^1 \times \PP^1, \O,\O), (\PP^1 \times \PP^1, \O(1,0),\O(1,0)), (\PP^1 \times \PP^1, \O,\O(1,0)).
\end{align*}
For these choices the $7 \times 7$ matrix with rows 
$$
(a_1^2,a_1 c_1,c_1^2,a_1 K_S, c_1 K_S, K_S^2, \chi(\O_S))
$$
has full rank. In each of these cases $S$ is a toric surface. 

Suppose $S$ is a toric surface with torus $T = \C^{*2}$. Let $\{U_\sigma\}_{\sigma = 1, \ldots, e(S)}$ be the cover of maximal $T$-invariant affine open subsets of $S$. On $U_\sigma$ we use coordinates $x_\sigma, y_\sigma$ such that $T$ acts with characters of weight $v_\sigma, w_\sigma \in \Z^2$
$$
t \cdot (x_\sigma,y_\sigma) = (\chi(v_\sigma)(t) \, x_\sigma, \chi(w_\sigma)(t) \, y_\sigma).
$$
Here $\chi(m) : T \rightarrow \C^*$ denotes the character of weight $m \in \Z^2$. Consider the integral over $S^{[n_1]} \times S^{[n_2]}$ of Definition \ref{defZX}
\begin{equation} \label{HilbxHilb}
\int_{S^{[n_1]} \times S^{[n_2]}} \frac{\sfT_{-y}^{\C^*}(E_{n_1,n_2},1-y) \, \Eu(\O(a_1)^{[n_1]}) \, \Eu(\O(c_1-a_1)^{[n_2]} \otimes \s^2)}{\Eu\big(E_{n_1,n_2}-\pi_1^*T_{S^{[n_1]}}-\pi_2^*T_{S^{[n_2]}}\big)}.
\end{equation}
Define $\widetilde{T} := T \times \C^*$, where $\C^*$ denotes the trivial torus factor of Section \ref{sec1}. The $T$-fixed locus of $S^{[n_1]} \times S^{[n_2]}$ is indexed by pairs $(\bslambda,\bsmu)$ with
$$
\bslambda = \{\lambda^{(\sigma)}\}_{\sigma=1, \ldots, e(S)}, \ \bsmu = \{\mu^{(\sigma)}\}_{\sigma=1, \ldots, e(S)},
$$
where $\lambda^{(\sigma)}$, $\mu^{(\sigma)}$ are partitions such that
\begin{equation} \label{size}
\sum_\sigma |\lambda^{(\sigma)}| = n_1, \ \sum_\sigma |\mu^{(\sigma)}| = n_2.
\end{equation}
Here $|\lambda|$ denotes the size of $\lambda$. A partition $\lambda = (\lambda_1 \geq  \cdots \geq  \lambda_\ell)$ corresponds to a monomial ideal of $\C[x,y]$
$$
I_{Z_\lambda} := (y^{\lambda_1}, xy^{\lambda_2}, \ldots, x^{\ell-1}y^{\lambda_\ell}, x^{\ell}),
$$
where $\ell(\lambda) = \ell$ denotes the length of $\lambda$. For $\lambda^{(\sigma)}$ we denote the subscheme defined by the corresponding monomial ideal in variables $x_\sigma,y_\sigma$ by $Z_{\lambda^{(\sigma)}}$ and similarly for $\mu^{(\sigma)}$. 

In order to apply localization, we make a choice of $T$-equivariant structure on the line bundles $\O(a_1)$, $\O(c_1-a_1)$. For any $T$-equivariant divisor $a$, the restriction $\O(a)|_{U_{\sigma}}$
is trivial with $T$-equivariant structure determined by some character of weight $a_\sigma \in \Z^2$. By Atiyah-Bott localization, the integral \eqref{HilbxHilb} equals
\begin{align*}
&\sum_{(\bslambda,\bsmu)}  \prod_\sigma \frac{\Eu(H^0(\O(a_1)|_{Z_{\lambda^{(\sigma)}}}))}{\Eu(T_{Z_{\lambda^{(\sigma)}}})} \frac{\Eu(H^0(\O(c_1-a_1)|_{Z_{\mu^{(\sigma)}}}) \otimes \s^2)}{\Eu(T_{Z_{\mu^{(\sigma)}}})} \\
&\times \frac{\sfT^{\widetilde{T}}_{-y}(E_{n_1,n_2}|_{(Z_{\lambda^{(\sigma)}},Z_{\mu^{(\sigma)}})},1-y)}{\Eu(E_{n_1,n_2}|_{(Z_{\lambda^{(\sigma)}},Z_{\mu^{(\sigma)}})} - T_{Z_{\lambda^{(\sigma)}}} - T_{Z_{\mu^{(\sigma)}}})}.
\end{align*}
Here $\Eu(\cdot)$ denotes $\widetilde{T}$-equivariant Euler class, $\sfT^{\widetilde{T}}$ is the $\widetilde{T}$-equivariant version of \eqref{defT}, and the sum is over all $(\bslambda,\bsmu)$ satisfying \eqref{size}. Moreover, $T_{Z_{\lambda}}$ denotes the $T$-equivariant Zariski tangent space of $(\C^{2})^{[n]}$ at $Z_\lambda$ where $n = |\lambda|$. 
The calculation of the above product reduces to the computation of the following elements of the $T$-equivariant $K$-group $K_0^{T}(pt)$
\begin{align*}
&H^0(\O(a)|_{Z_{\lambda^{(\sigma)}}}), \\
&R\Hom_S(\O_{Z_{\lambda^{(\sigma)}}}, \O_{Z_{\lambda^{(\sigma)}}}), \\
&R\Hom_S(\O_{Z_{\lambda^{(\sigma)}}}, \O_{Z_{\mu^{(\sigma)}}}(a)),
\end{align*}
for various $T$-equivariant divisors $a$. By definition we have
$$
Z_{\lambda^{(\sigma)}} = \sum_{i=0}^{\ell(\lambda^{(\sigma)}) - 1} \sum_{j=0}^{\lambda^{(\sigma)}_{i+1}-1} \chi(v_\sigma)^i \, \chi(w_\sigma)^j
$$
and similarly for $Z_{\mu^{(\sigma)}}$. Multiplying by $\chi(a_\sigma)$ gives $H^0(\O(a)|_{Z_{\lambda^{(\sigma)}}})$. Define 
$$
\overline{\chi(m)} := \chi(-m) = \frac{1}{\chi(m)},
$$
for any $m \in \Z^2$. This defines an involution on $K_0^{T}(pt)$ by $\Z$-linear extension. 
\begin{proposition}
Let $W,Z \subset S$ be 0-dimensional $T$-invariant subschemes supported on a chart $U_\sigma \subset S$ and let $a$ be a $T$-equivariant divisor on $S$ corresponding to a character of weight $a_\sigma \in \Z^2$ on $U_\sigma$. Then we have the following equality in $K_0^{T}(pt)$
$$
R\Hom_S(\O_W, \O_Z(a)) = \chi(a_\sigma) \, \overline{W} Z \frac{(1-\chi(v_\sigma)) (1- \chi(w_\sigma))}{\chi(v_\sigma) \chi(w_\sigma)}.
$$
\end{proposition}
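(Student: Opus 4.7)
The key is that $W$ and $Z$ are scheme-theoretically supported at the unique $T$-fixed point of $U_\sigma \cong \C^2$ (the origin), so the entire computation localises to this chart. The line bundle $\O(a)$ restricts trivially to $U_\sigma$ but carries a $T$-equivariant structure of weight $a_\sigma$ on its generator, contributing only an overall factor $\chi(a_\sigma)$. Because $U_\sigma$ is affine and our sheaves are $0$-dimensional, $R\Hom_S$ reduces to local Ext groups on $\C^2$ with no higher direct image to worry about, so it is enough to prove
\[
R\Hom_{\C^2}(\O_W,\O_Z) \;=\; \overline{W}\,Z\,\frac{(1-\chi(v_\sigma))(1-\chi(w_\sigma))}{\chi(v_\sigma)\chi(w_\sigma)}
\]
in $K_0^T(\mathrm{pt})=R(T)$.

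The first step is to take a $T$-equivariant free resolution $F_\bullet\to\O_W$ on $\C^2$, of length at most two by Hilbert's syzygy theorem, with $F_i=\bigoplus_j\O(m_{ij})$ for some weights $m_{ij}$. Applying $\Hom(-,\O_Z)$ term-by-term gives a complex of $T$-equivariant finite-length sheaves whose characters can be read off from $\Hom_{\O}(\O(m),\O_Z)=\O_Z\otimes\chi(-m)$. The alternating sum of characters then yields
\[
R\Hom_{\C^2}(\O_W,\O_Z) \;=\; Z\cdot\overline{[\O_W]},
\]
where $[\O_W]=\sum_i(-1)^i\sum_j\chi(m_{ij})$ is the class of $\O_W$ in $K_0^T(\C^2)=R(T)$ and the overline denotes the involution $\chi(m)\leftrightarrow\chi(-m)$ on $R(T)$.

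The real content of the proposition is the identity $[\O_W]=W\cdot(1-\chi(v_\sigma))(1-\chi(w_\sigma))$ comparing the K-theoretic class of $\O_W$ to the representation-theoretic character $W$. The Koszul resolution of the origin gives the base case $[\O_0]=(1-\chi(v_\sigma))(1-\chi(w_\sigma))$, and pushforward $\iota_*\colon K_0^T(\mathrm{pt})\to K_0^T(\C^2)$ along the origin is $R(T)$-linear and sends $1$ to $[\O_0]$, so it coincides with multiplication by $(1-\chi(v_\sigma))(1-\chi(w_\sigma))$. To handle general $W$, I filter $\O_W$ by the $\mathfrak{m}_0$-adic powers $\O_W\supset\mathfrak{m}_0\O_W\supset\mathfrak{m}_0^2\O_W\supset\cdots$. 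This filtration is finite because $\O_W$ is annihilated by a power of $\mathfrak{m}_0$; each graded piece $\mathfrak{m}_0^k\O_W/\mathfrak{m}_0^{k+1}\O_W$ is an $\O/\mathfrak{m}_0$-module and is therefore the pushforward $\iota_* V_k$ of a finite-dimensional $T$-representation $V_k$. Summing in $K_0^T(\C^2)$ gives $[\O_W]=\bigl(\sum_k V_k\bigr)\cdot(1-\chi(v_\sigma))(1-\chi(w_\sigma))=W\cdot(1-\chi(v_\sigma))(1-\chi(w_\sigma))$, because $\sum_k V_k$ is by construction the total $T$-character of $\O_W$.

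Combining the two displays yields $R\Hom_S(\O_W,\O_Z(a))=\chi(a_\sigma)\,\overline{W}\,Z\,(1-\chi(-v_\sigma))(1-\chi(-w_\sigma))$, and the stated form follows from the elementary identity $(1-\chi(-v))(1-\chi(-w))=(1-\chi(v))(1-\chi(w))/(\chi(v)\chi(w))$. The one step requiring genuine care is the filtration-and-pushforward argument identifying $[\O_W]$ with $W\cdot[\O_0]$; once that is in hand, the free resolution and character bookkeeping are routine, so I do not anticipate a serious obstacle.
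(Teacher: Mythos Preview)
Your proof is correct and takes a genuinely different route from the paper's.

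The paper follows the vertex-formalism of \cite{MNOP}: it resolves the \emph{ideal sheaves} $I_W$ and $I_Z$ by free modules, introduces the Poincar\'e polynomials $P_W,P_Z$, and computes $R\Hom_{U_\sigma}(I_W,I_Z(a))$ from the double complex as $\chi(a_\sigma)\overline{P}_W P_Z/((1-\chi(v_\sigma))(1-\chi(w_\sigma)))$. It then passes back to structure sheaves via $I_W=\O_{U_\sigma}-\O_W$, $I_Z=\O_{U_\sigma}-\O_Z$, which forces it to handle the infinite-dimensional characters $\Gamma(U_\sigma,\O(a))$ and to invoke $T$-equivariant Serre duality for the cross term $R\Hom_{U_\sigma}(\O_W,\O_{U_\sigma}(a))$. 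The key identity $W=(1-P_W)/((1-\chi(v_\sigma))(1-\chi(w_\sigma)))$ is read off from these formal power series.

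You bypass all of that by resolving $\O_W$ directly and applying $\Hom(-,\O_Z)$, which keeps every module finite-dimensional and yields $R\Hom(\O_W,\O_Z)=Z\cdot\overline{[\O_W]}$ in one stroke. Your identity $[\O_W]=W\cdot(1-\chi(v_\sigma))(1-\chi(w_\sigma))$ is literally the same equation as the paper's (since $[\O_W]=1-P_W$ in $K_0^T(\C^2)$), but your $\mathfrak{m}_0$-adic filtration plus Koszul argument proves it cleanly inside $K_0^T(\C^2)\cong R(T)$ without ever expanding $1/((1-\chi(v_\sigma))(1-\chi(w_\sigma)))$ as a series or appealing to Serre duality. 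Your route is more self-contained for this statement; the paper's route is the one that generalises naturally to the MNOP threefold vertex, where working with ideal sheaves is essential.
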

\begin{proof}
The proof we present is similar to the calculation in \cite[Sect.~4.7]{MNOP}. Let $v := v_\sigma$, $w := w_\sigma$, and $a := a_\sigma$. Write $U_\sigma = \Spec R$ with $R = \C[x_\sigma,y_\sigma]$. Then
$$
R\Hom_S(\O_W, \O_Z(a)) = R \Hom_{U_\sigma}(\O_{W}, \O_{Z}(a)),
$$
because $W,Z$ are supported on $U_\sigma$. We claim
\begin{align*}
&\Gamma(U_\sigma, \O(a)) - R \Hom_{U_\sigma}(I_W,I_Z(a)) = \chi(a) \Big( Z + \frac{\overline{W}}{\chi(v) \chi(w)} - \overline{W} Z \frac{(1-\chi(v)) (1- \chi(w))}{\chi(v) \chi(w)}  \Big).
\end{align*}
The result of the proposition follows from this using $I_Z = \O_{U_\sigma} - \O_{Z}$, $I_W = \O_{U_\sigma} - \O_{W}$, because the first term on the right-hand side is $H^0(U_\sigma,\O_Z(a))$ and the second term is
\begin{align*}
R\Hom_{U_\sigma}(\O_W,\O_{U_\sigma}(a)) &= H^0(U_\sigma,\O_W(-a) \otimes K_{U_\sigma})^* \\
&= H^0(U_\sigma,\O_W)^* \otimes \frac{\chi(a)}{\chi(v)\chi(w)}
\end{align*}
by $T$-equivariant Serre duality. In order to prove the claim, choose $T$-equivariant graded free resolutions
\begin{align*}
0 \rightarrow E_r &\rightarrow \cdots \rightarrow E_0 \rightarrow I_{W} \rightarrow 0, \\
0 \rightarrow F_s &\rightarrow \cdots \rightarrow F_0 \rightarrow I_{Z} \rightarrow 0,
\end{align*}
where 
\begin{align*}
E_i = \bigoplus_j R(d_{ij}), \ F_i = \bigoplus_j R(e_{ij}).
\end{align*}
Then we have Poincar\'e polynomials
$$
P_W = \sum_{i,j} (-1)^i \chi(d_{ij}), \ P_Z = \sum_{i,j} (-1)^i \chi(e_{ij}),
$$
which are independent of the choice of resolution. Moreover
\begin{align}
\begin{split} \label{WZ}
W &= \O_{U_\sigma} - I_W \\
&= \frac{1-P_W}{(1-\chi(v)) (1-\chi(w))}, \\
Z &= \O_{U_\sigma} - I_Z \\
&= \frac{1-P_Z}{(1-\chi(v)) (1-\chi(w))}.
\end{split}
\end{align}
Furthermore
\begin{align*}
R \Hom_{U_\sigma}(I_W,I_Z(a)) &= \sum_{i,j,k,l} (-1)^{i+k} \Hom(R(d_{ij}),R(a+e_{kl})) \\
&= \sum_{i,j,k,l} (-1)^{i+k} R(a+e_{kl} - d_{ij}) \\
&= \frac{\chi(a) \overline{P}_W P_Z}{(1-\chi(v))(1-\chi(w))}.
\end{align*}
Eliminating $P_W$, $P_Z$ using \eqref{WZ} gives the desired result.
\end{proof}
We implemented the calculation of \eqref{HilbxHilb} into a PARI/GP program (and some parts into Maple as well). This allows us to compute $A_1(s,y,q), \ldots, A_7(s,y,q)$ up to order $q^7$, where we calculated the coefficient of $q^i$ up to order $s^{29-3i}$ . We also calculated $A_1(s,1,q), \ldots, A_7(s,1,q)$ up to order $q^{30}$ and any order in $s$. The latter are listed up to order $q^4$ in Appendix \ref{app1}.

\section{Two more conjectures and consequences}

In Section \ref{toricsec}, we have given a toric procedure to calculate the universal functions $A_1(s,y,q), \ldots, A_7(s,y,q)$ and therefore also $\sfA_{\underline{\alpha}}(s,y,q)$ defined by \eqref{defsfA}. Consequently, we could now go ahead and provide checks of Conjecture \ref{conj}. 

Instead we first present two generalizations of Conjecture \ref{conj}. The first conjecture is a statement about intersection numbers on Hilbert schemes of points. It implies a formula for arbitrary blow-ups of surfaces $S$ with $b_1(S)=0$, $p_g(S)>0$, and Seiberg-Witten basic classes $0$ and $K_S \neq 0$. The second conjecture generalizes Conjecture \ref{conj} to \emph{arbitrary} surfaces $S$ with $b_1(S)=0$ and $p_g(S)>0$. It implies a blow-up formula, which is reminiscent of the blow-up formula of W.-P.~Li and Z.~Qin \cite{LQ1,LQ2}. It also implies a formula for surfaces with canonical divisor with irreducible reduced connected components. The latter refines a result from the physics literature due to Vafa-Witten \cite[Eqn.~(5.45)]{VW}.

\subsection{Numerical conjecture}

Suppose $S$ is a surface with $b_1(S) = 0$, $p_g(S)>0$, and Seiberg-Witten basic classes are $0$ and $K_S \neq 0$. Then Conjecture \ref{conj} applies to any choice of
$$
\underline{\beta} = (\beta_1, \beta_2,\beta_3,\beta_4) = (c_1^2, c_1 K_S, K_S^2, \chi(\O_S)),
$$
provided we choose a polarization $H$ for which there are no rank 2 strictly Gieseker $H$-semistable sheaves with Chern classes $c_1, c_2$. Moreover, as long as the assumptions of Corollary \ref{univcor} are satisfied, the coefficients of $\sfZ_{S,c_1}(x,y)$ are calculated by the universal functions $A_1, \ldots, A_7$. This raises the expectation that for \emph{any} choice of the 4-tuple $\underline{\beta} \in \Z^4$ the formula of Corollary \ref{univcor} is determined by the coefficients of the modular form of Conjecture \ref{conj}. This turns out to be false. Computer calculations show that we need to impose\footnote{The inequality $K_S^2 \geq \chi(\O_S) - 3$ is reminiscent of the (stronger) Noether's inequality $K_S^2 \geq 2(\chi(\O_S)-3)$ for minimal surfaces of general type.}
$$
\beta_3 \geq \beta_4 - 3.
$$
Indeed let $\underline{\beta} \in \Z^4$ with $\beta_1 \equiv \beta_2 \mod 2$ and $\beta_3 \geq \beta_4 - 3$. Let $n < \frac{1}{2}(\beta_1-\beta_2)+2\beta_4$.
We conjecture that  
\begin{align*}
\Coeff_{s^0x^{4n-\beta_1-3\beta_4}} \Big[&(x y^{-\frac{1}{2}})^{-\beta_1-3\beta_4} \sfA_{(0,0,\beta_1,0,\beta_2,\beta_3,\beta_4)}(s,y,(x y^{-\frac{1}{2}})^4) \\
&+ (-1)^{\beta_4} (x y^{-\frac{1}{2}})^{-\beta_1-3\beta_4} \sfA_{(\beta_3,\beta_2,\beta_1,\beta_3,\beta_2,\beta_3,\beta_4)}(s,y,(x y^{-\frac{1}{2}})^4) \Big]
\end{align*}
equals the coefficient of $x^{4n-\beta_1-3\beta_4}$ of 
$$
8 \Bigg( \frac{1}{2} \prod_{n=1}^{\infty} \frac{1}{(1-x^{2n})^{10}(1-x^{2n} y)(1-x^{2n} y^{-1})} \Bigg)^{\beta_4} \Bigg( \frac{2 \overline{\eta}(x^4)^2}{\theta_3(x,y^{\frac{1}{2}})} \Bigg)^{\beta_3}.
$$
In fact, we have a stronger conjecture, which arose by attempts to generalize Conjecture \ref{conj} to blow-ups.
\begin{conjecture} \label{numconj}
Let $\underline{\beta} \in \Z^4$ be such that $\beta_1 \equiv \beta_2 \mod 2$ and $\beta_3 \geq \beta_4 - 3$. Let $n < \frac{1}{2}(\beta_1-\beta_2)+2\beta_4$. Let $(\gamma_1,\gamma_2)\in \Z^2$. 
Then  
\begin{align*}\Coeff_{s^0x^{4n-\beta_1-3\beta_4}} \Big[&(x y^{-\frac{1}{2}})^{-\beta_1-3\beta_4} \sfA_{(\gamma_1,\gamma_2,\beta_1,\gamma_1,\beta_2,\beta_3,\beta_4)}(s,y,(x y^{-\frac{1}{2}})^4) \\
&+ (-1)^{\beta_4} (x y^{-\frac{1}{2}})^{-\beta_1-3\beta_4} \sfA_{(\beta_3-\gamma_1,\beta_2-\gamma_2,\beta_1,\beta_3-\gamma_1,\beta_2,\beta_3,\beta_4)}(s,y,(x y^{-\frac{1}{2}})^4) \Big]
\end{align*}
equals the coefficient of $x^{4n-\beta_1-3\beta_4}$ of 
\begin{align*}
\psi_{\gamma_1, \gamma_2, \beta_3, \beta_4}(x,y) := 8 (-1)^{\gamma_2}\Bigg( \frac{\phi(x,y)}{2}\Bigg)^{\beta_4} \Bigg( \frac{2 \overline{\eta}(x^4)^2}{\theta_3(x,y^{\frac{1}{2}})} \Bigg)^{\beta_3}\Bigg(\frac{\theta_3(x,y^{\frac{1}{2}})}{\theta_3(-x,y^{\frac{1}{2}})} \Bigg)^{\gamma_1},
\end{align*}
where 
$$
\phi(x,y) :=\prod_{n=1}^{\infty} \frac{1}{(1-x^{2n})^{10}(1-x^{2n} y)(1-x^{2n} y^{-1})}.
$$
\end{conjecture}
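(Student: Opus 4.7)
My plan is to exploit the universality of Proposition \ref{univ} to reduce Conjecture \ref{numconj} to a closed identity among the seven universal series $A_1, \ldots, A_7$, and then either derive that identity from theta-function manipulations or verify it order by order via toric localization. First, I will record the geometric reading of the exponent tuple $(\gamma_1, \gamma_2, \beta_1, \gamma_1, \beta_2, \beta_3, \beta_4)$: the implicit constraint $a_1^2 = a_1 K_S = \gamma_1$ is satisfied by every Seiberg-Witten basic class on an arbitrary blow-up of a surface whose minimal model has basic classes $0$ and $K_S$, and the involution $(\gamma_1, \gamma_2) \mapsto (\beta_3 - \gamma_1, \beta_2 - \gamma_2)$ is precisely $a_1 \mapsto K_S - a_1$. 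So Conjecture \ref{numconj} is morally a Vafa-Witten-type formula for arbitrary such blow-ups, and one is after a blow-up identity in which the $\gamma$-dependence is captured by the explicit theta ratio $\theta_3(x, y^{1/2})/\theta_3(-x, y^{1/2})$.

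Next, I would unpack \eqref{defsfA}: both bracketed terms on the left are explicit products of $A_i(s, y, q/s)^{\alpha_i}$ times theta-type prefactors coming from the second line of \eqref{keyexpr}. The $\gamma_i$ appear only through exponents on $A_1, A_2, A_4$ and on those prefactors, so the conjecture cleanly splits into (a) the baseline case $(\gamma_1, \gamma_2) = (0, 0)$, which under the strong form of Mochizuki's formula (Remark \ref{strongform}) reduces to Conjecture \ref{conj}; and (b) a $\gamma$-shift identity asserting that incrementing $\gamma_1$ by one (resp.\ $\gamma_2$ by one) multiplies the bracketed left-hand side by $\theta_3(x, y^{1/2})/\theta_3(-x, y^{1/2})$ (resp.\ by $-1$) after taking the coefficient of $s^0 x^{4n - \beta_1 - 3\beta_4}$. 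I would attack (b) directly via the toric procedure of Section \ref{toricsec}: since $A_1, \ldots, A_7$ are determined by their values on $\PP^2$ and $\PP^1 \times \PP^1$ and are computable to any finite order in $q$ by Atiyah-Bott localization, this yields a verification modulo $x^N$ for any specified $N$, extending the evidence of Section \ref{evidencesec}.

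The main obstacle will be upgrading this to a closed-form identity rather than an order-by-order check. A structural proof would require explicit modular expressions for $A_1, \ldots, A_7$, or equivalently by Section \ref{Nekrasovsec} a Seiberg-Witten curve description of the Nekrasov partition function with one adjoint and one fundamental matter; Remark \ref{4univ} emphasizes that no such curve is presently known. The realistic outcome of the plan is therefore a verification of Conjecture \ref{numconj} to large finite order in $x$ for each fixed $(\gamma_1, \gamma_2, \beta_3, \beta_4)$, together with the geometric reinterpretation as a conjectural blow-up formula.
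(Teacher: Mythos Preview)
The statement is a \emph{conjecture}: the paper does not prove it, and neither does your proposal. Your honest final paragraph --- that the realistic outcome is verification to finite order in $x$ via toric localization on $\PP^2$ and $\PP^1\times\PP^1$ --- is exactly what the paper does (Section~\ref{numconjevidence}). In that sense your proposal and the paper end up in the same place, and your geometric reading of the tuple $(\gamma_1,\gamma_2,\beta_1,\gamma_1,\beta_2,\beta_3,\beta_4)$ as coming from basic classes on blow-ups is correct and is what motivates Proposition~\ref{nonmingt}.

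There is, however, a genuine logical slip in your intermediate structure. You write that the baseline case $(\gamma_1,\gamma_2)=(0,0)$ ``reduces to Conjecture~\ref{conj}'' under the strong form of Mochizuki's formula. This is backwards in two respects. First, the implication in the paper runs the other way: Proposition~\ref{numconjimpliesconj} shows that Conjecture~\ref{numconj} (together with strong Mochizuki) \emph{implies} Conjecture~\ref{conj}, not conversely. Second, even the $(\gamma_1,\gamma_2)=(0,0)$ case of Conjecture~\ref{numconj} is strictly stronger than Conjecture~\ref{conj}, because it is asserted for \emph{arbitrary} integer tuples $\underline{\beta}$ with $\beta_1\equiv\beta_2\pmod 2$ and $\beta_3\ge\beta_4-3$, including tuples that are not realized by any surface (the paper explicitly tests such non-geometric tuples in the remark at the end of Section~\ref{numconjevidence}). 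So you cannot reduce (a) to an already-established geometric statement; Conjecture~\ref{conj} is itself open, and in any case covers fewer cases.

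Your proposed $\gamma$-shift identity is an appealing reformulation, but note that on the left-hand side the two summands carry $\gamma_1$ and $\beta_3-\gamma_1$ in opposite directions, so the claim that the sum transforms multiplicatively under $\gamma_1\mapsto\gamma_1+1$ is itself a nontrivial assertion about the $A_i$, not an algebraic tautology. As you correctly recognize, closing this would require explicit modular identities for $A_1,\ldots,A_7$ (or the Nekrasov prepotential of Remark~\ref{4univ}), which are not available.
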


The evidence for this conjecture is presented in Section \ref{numconjevidence}. We now discuss its consequences. 

\begin{remark} Let  $\beta_1\in \Z$ be even, $\beta_4\in \Z_{\le 3}$, and $n < \frac{1}{2}\beta_1+2\beta_4$. Then we conjecture 
\begin{align} \label{LHSK3}
\Coeff_{s^0x^{4n-\beta_1-3\beta_4}} \Big[& (x y^{-\frac{1}{2}})^{-\beta_1-3\beta_4} \sfA_{(0,0,\beta_1,0,0,0,\beta_4)}(s,y,(x y^{-\frac{1}{2}})^4) \Big]
\end{align}
equals the coefficient of $x^{4n-\beta_1-3\beta_4}$ of 
\begin{align} \label{RHSK3}
4 \Bigg( \frac{\phi(x,y}{2} \Bigg)^{\beta_4}.
\end{align}
In the case $\beta_4$ is even this follows from Conjecture \ref{numconj} by taking $\gamma_1=\gamma_2=\beta_2=\beta_3 = 0$ because then the two summands on the left-hand side of the conjecture are equal. In the case $\beta_4$ is odd this says that \eqref{LHSK3} is $0$, because \eqref{RHSK3} only contains even powers of $x$.
\end{remark}
\begin{proposition} \label{numconjimpliesconj}
Conjecture \ref{numconj} and the strong form of Mochizuki's formula (Remark \ref{strongform}) imply Conjecture \ref{conj}.
\end{proposition}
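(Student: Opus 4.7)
The plan is to combine the strong form of Mochizuki's formula (Remark~\ref{strongform}) with Conjecture~\ref{numconj} specialized to the pair $(\gamma_1,\gamma_2)=(0,0)$, and then observe that the resulting right-hand side $\psi_{0,0,K_S^2,\chi(\O_S)}(x,y)$ coincides with $\psi_S(x,y)$.

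First, I would apply the strong form of Mochizuki's formula to $M := M_{S}^{H}(2,c_1,c_2)$. Under the hypotheses of Conjecture~\ref{conj}, the only Seiberg--Witten basic classes are $0$ and $K_S$, with $\SW(0)=1$ and $\SW(K_S)=(-1)^{\chi(\O_S)}$. Thus, for $c_2$ satisfying assumption (i) of Corollary~\ref{univcor}, that is $c_2<\tfrac12 c_1(c_1-K_S)+2\chi(\O_S)$, Corollary~\ref{univcor} (in the strong form) yields
\begin{align*}
\chi_{-y}^{\vir}(M) \;=\; \Coeff_{s^0q^{c_2}}\Big[\,&\sfA_{(0,0,c_1^2,0,c_1K_S,K_S^2,\chi(\O_S))}(s,y,q) \\
&+(-1)^{\chi(\O_S)}\,\sfA_{(K_S^2,c_1K_S,c_1^2,K_S^2,c_1K_S,K_S^2,\chi(\O_S))}(s,y,q)\Big].
\end{align*}

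Next, I would pass to the shifted invariant and to the generating variable $x$. Setting $q=(xy^{-1/2})^4$ and multiplying by $(xy^{-1/2})^{-c_1^2-3\chi(\O_S)}$, we have $(xy^{-1/2})^{4c_2-c_1^2-3\chi(\O_S)}=y^{-\vd(M)/2}x^{\vd(M)}$, so that the factor $y^{-\vd(M)/2}$ is absorbed and we obtain, with $\vd:=\vd(M)=4c_2-c_1^2-3\chi(\O_S)$,
\begin{align*}
\overline{\chi}_{-y}^{\vir}(M) \;=\; \Coeff_{s^0 x^{\vd}}\Big[\,&(xy^{-1/2})^{-c_1^2-3\chi(\O_S)}\,\sfA_{(0,0,c_1^2,0,c_1K_S,K_S^2,\chi(\O_S))}(s,y,(xy^{-1/2})^4) \\
&+ (-1)^{\chi(\O_S)}(xy^{-1/2})^{-c_1^2-3\chi(\O_S)}\,\sfA_{(K_S^2,c_1K_S,c_1^2,K_S^2,c_1K_S,K_S^2,\chi(\O_S))}(s,y,(xy^{-1/2})^4)\Big].
\end{align*}
Now I would apply Conjecture~\ref{numconj} with $(\gamma_1,\gamma_2)=(0,0)$, $(\beta_1,\beta_2,\beta_3,\beta_4)=(c_1^2,c_1K_S,K_S^2,\chi(\O_S))$, and $n=c_2$. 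The parity condition $\beta_1\equiv \beta_2\bmod 2$ is Wu's formula $c_1^2\equiv c_1K_S\bmod 2$; the numerical range condition $n<\tfrac12(\beta_1-\beta_2)+2\beta_4$ is exactly the assumption $\chi(\ch)>0$ used in the previous step; and $\beta_3\ge\beta_4-3$, i.e.\ $K_S^2\ge\chi(\O_S)-3$, is the auxiliary inequality noted in the footnote to Conjecture~\ref{numconj} (satisfied in particular by all minimal surfaces of general type via Noether's inequality).

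Finally, I would check that $\psi_{0,0,K_S^2,\chi(\O_S)}(x,y)$ reduces to $\psi_S(x,y)$: for $\gamma_1=\gamma_2=0$ the factor $(\theta_3(x,y^{1/2})/\theta_3(-x,y^{1/2}))^{\gamma_1}$ and the sign $(-1)^{\gamma_2}$ are trivial, so
\[
\psi_{0,0,K_S^2,\chi(\O_S)}(x,y)\;=\;8\bigl(\phi(x,y)/2\bigr)^{\chi(\O_S)}\bigl(2\overline{\eta}(x^4)^2/\theta_3(x,y^{1/2})\bigr)^{K_S^2}\;=\;\psi_S(x,y),
\]
since $\phi(x,y)=\prod_{n\ge 1}(1-x^{2n})^{-10}(1-x^{2n}y)^{-1}(1-x^{2n}y^{-1})^{-1}$ is exactly the Nullwert factor in $\psi_S$. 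Comparing with the expression for $\overline{\chi}_{-y}^{\vir}(M)$ obtained above then gives Conjecture~\ref{conj}. The main obstacle in this plan is the bookkeeping in the second step: one must verify that the rescaling $q\mapsto (xy^{-1/2})^4$ together with the prefactor $(xy^{-1/2})^{-\beta_1-3\beta_4}$ precisely converts $\chi_{-y}^{\vir}$ into $\overline{\chi}_{-y}^{\vir}$ and aligns the degree in $x$ with $\vd(M)$, so that the two specializations of Conjecture~\ref{numconj} can be read off as the coefficient of $x^{\vd}$ in $\psi_S(x,y)$.
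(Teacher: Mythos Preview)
Your approach matches the paper's up to a point: you correctly specialize Conjecture~\ref{numconj} to $(\gamma_1,\gamma_2)=(0,0)$, identify $(\beta_1,\beta_2,\beta_3,\beta_4)=(c_1^2,c_1K_S,K_S^2,\chi(\O_S))$, and observe that $\psi_{0,0,K_S^2,\chi(\O_S)}=\psi_S$. However, there is a genuine gap.

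The conclusion you reach is valid only for those $c_2$ satisfying $c_2<\tfrac12 c_1(c_1-K_S)+2\chi(\O_S)$, equivalently $\vd<c_1^2-2c_1K_S+5\chi(\O_S)$. This bound appears both in Corollary~\ref{univcor}(i) and in the hypothesis $n<\tfrac12(\beta_1-\beta_2)+2\beta_4$ of Conjecture~\ref{numconj}, and you never remove it. But Conjecture~\ref{conj} asserts the formula for \emph{all} $c_2$. The paper closes this gap with an additional step you are missing: replace $c_1$ by $c_1+2tH$ for $t\gg0$. Since tensoring by $\O(tH)$ gives an isomorphism of moduli spaces, one has $\sfZ_{S,c_1}(x,y)=\sfZ_{S,c_1+2tH}(x,y)$, while the bound becomes $\vd<c_1^2-2c_1K_S+5\chi(\O_S)+4t^2H^2+4tH(c_1-K_S)$, which can be made arbitrarily large.

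A second, smaller gap: your justification of $\beta_3\ge\beta_4-3$ only covers minimal general type. Under the hypotheses of Conjecture~\ref{conj} (basic classes exactly $0$ and $K_S\ne0$, $p_g>0$, $b_1=0$), the surface could also be minimal properly elliptic or the blow-up of a $K3$ in one point; the paper handles these cases separately (for properly elliptic, one shows $\chi(\O_S)\le 3$ since otherwise there would be additional basic classes).
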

\begin{proof}
We only use Conjecture \ref{numconj} for $\gamma_1=\gamma_2 = 0$. Suppose $S$ is a surface with $b_1(S) = 0$, $p_g(S)>0$, and Seiberg-Witten basic classes $0$ and $K_S \neq 0$. Choose a polarization $H$ and $c_1, c_2$ such that there are no rank 2 strictly Gieseker $H$-semistable sheaves with these Chern classes. By taking 
$$
(\beta_1, \beta_2,\beta_3,\beta_4) = (c_1^2, c_1K_S, K_S^2, \chi(\O_S))
$$
we automatically satisfy $\beta_1 \equiv \beta_2 \mod 2$. The fact that $\beta_3 \geq \beta_4 - 3$ can be seen as follows. If $S$ is not minimal then it is the blow-up of a $K3$ in one point and the inequality is trivial. If $S$ is minimal then it is minimal properly elliptic or minimal general type because $p_g(S)>0$. For minimal general type the inequality follows from Noether's inequality $K_S^2 \geq 2(\chi(\O_S) - 3)$. When $\pi : S \rightarrow B$ is minimal properly elliptic, we have $B \cong \PP^1$ because $b_1(S) = 0$. The canonical bundle $K_S$ satisfies $K_S^2 = 0$ and can be represented by an effective divisor containing $\pi^* D$, where $D \subset \PP^1$ is some effective divisor of degree 
$$
\chi(\O_S)-2 \geq 0. 
$$
Indeed $\chi(\O_S) \leq 3$, because otherwise at least $0, F, K_S$ are distinct Seiberg-Witten basic classes \cite{FM}. By the strong form of Mochizuki's formula (Corollary \ref{univcor} and Remark \ref{strongform}), Conjecture \ref{conj} follows for all
$$
\vd < c_1^2 - 2 c_1 K_S + 5 \chi(\O_S).
$$
If this inequality is not satisfied, then we replace $c_1$ by $c_1 + t H$ for some $t>0$. Since
$$
\sfZ_{S,c_1}(x,y) = \sfZ_{S,c_1 + 2 t H}(x,y)
$$
we can compute the coefficients of this generating function for all
$$
\vd < c_1^2 - 2 c_1 K_S + 5 \chi(\O_S) + 4 t^2 H^2 + 4 t H (c_1 - K_S).
$$
By choosing $t \gg 0$ the bound becomes arbitrarily high.
\end{proof}

\subsection{Fixed first Chern class} \label{fixedc1sec}

\begin{proposition} \label{fixedc1propnum}
Assume Conjecture \ref{numconj}. Let $\underline{\beta} \in \Z^4$ be such that $\beta_1 \equiv \beta_2 \mod 2$ and $\beta_3 \geq \beta_4 - 3$ and let $(\gamma_1,\gamma_2)\in \Z^2$. Then 
\begin{align*}
&\Coeff_{s^0} \Big[ (x y^{-\frac{1}{2}})^{-\beta_1-3\beta_4} \sfA_{(\gamma_1,\gamma_2,\beta_1,\gamma_1,\beta_2,\beta_3,\beta_4)}(s,y,(x y^{-\frac{1}{2}})^4) \\
&\qquad\quad \ \, + (-1)^{\beta_4} (x y^{-\frac{1}{2}})^{-\beta_1-3\beta_4} \sfA_{(\beta_3-\gamma_1,\beta_2-\gamma_2,\beta_1,\beta_3-\gamma_1,\beta_2,\beta_3,\beta_4)}(s,y,(x y^{-\frac{1}{2}})^4) \Big]\\
&=2(-1)^{\gamma_2}\sum_{k=0}^3 (i^k)^{\beta_1-\beta_4} \Bigg(\frac{\phi(i^k x,y)}{2}\Bigg)^{\beta_4}\Bigg(\frac{2\overline{\eta}(x^4)^2}{\theta_3(i^k x,y^{\frac{1}{2}})}\Bigg)^{\beta_3}\Bigg(\frac{\theta_3(i^k x,y^{\frac{1}{2}})}{\theta_3(-i^k x,y^{\frac{1}{2}})}\Bigg)^{\gamma_1} \\
&\quad +O(x^{\beta_1-2\beta_2+5\beta_4}),
\end{align*}
where $i = \sqrt{-1}$ and
$$
\phi(x,y) :=\prod_{n=1}^{\infty} \frac{1}{(1-x^{2n})^{10}(1-x^{2n} y)(1-x^{2n} y^{-1})}.
$$
\end{proposition}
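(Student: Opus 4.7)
The plan is to derive the proposition from Conjecture \ref{numconj} by a standard roots-of-unity extraction. First I would observe that the $\sfA$-factors on the left-hand side, viewed as functions of $x$ through the substitution $q=(xy^{-\frac{1}{2}})^4$, are power series in $x^4$ times the overall prefactor $(xy^{-\frac{1}{2}})^{-\beta_1-3\beta_4}$. Consequently, when we expand the bracketed expression in Laurent series, the only monomials that can appear are $x^{4n-\beta_1-3\beta_4}$ for $n\ge 0$; all other coefficients of $x$ vanish automatically by construction of $\sfA_{\underline\alpha}$.

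Next, Conjecture \ref{numconj} identifies, for every $n<\frac{1}{2}(\beta_1-\beta_2)+2\beta_4$, the coefficient of $x^{4n-\beta_1-3\beta_4}$ in the bracketed expression with the corresponding coefficient in $\psi_{\gamma_1,\gamma_2,\beta_3,\beta_4}(x,y)$. Summing these equalities over all admissible $n$, the bracketed expression agrees with the sub-series of $\psi_{\gamma_1,\gamma_2,\beta_3,\beta_4}(x,y)$ formed by the monomials whose exponent is $\equiv -\beta_1-3\beta_4\equiv \beta_4-\beta_1 \pmod 4$, up to an error of order $O(x^{\beta_1-2\beta_2+5\beta_4})$. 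The exponent of this error term is exactly the one produced by the first excluded value $n=\frac{1}{2}(\beta_1-\beta_2)+2\beta_4$, which is an integer because $\beta_1\equiv\beta_2\pmod 2$.

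To put this sub-series into closed form I would apply the standard roots-of-unity filter: for any formal series $f(x)=\sum_m c_m x^m$ and any residue $a\pmod 4$,
\[
\sum_{m\equiv a\,(\mathrm{mod}\,4)} c_m\,x^m \;=\;\tfrac{1}{4}\sum_{k=0}^3 (i^k)^{-a}\,f(i^k x).
\]
Taking $a\equiv \beta_4-\beta_1$ gives the prefactor $(i^k)^{\beta_1-\beta_4}$ multiplying each $\psi_{\gamma_1,\gamma_2,\beta_3,\beta_4}(i^k x,y)$. The final step is a direct verification of the effect of the substitution $x\mapsto i^k x$ on the three building blocks of $\psi$: the factor $\overline{\eta}(x^4)^2$ is invariant since $(i^k)^4=1$, while $\phi(x,y)$, $\theta_3(x,y^{\frac{1}{2}})$ and $\theta_3(-x,y^{\frac{1}{2}})$ are transported to $\phi(i^k x,y)$, $\theta_3(i^k x,y^{\frac{1}{2}})$ and $\theta_3(-i^k x,y^{\frac{1}{2}})$ respectively. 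Assembling the factors and combining the overall numerical constant $8\cdot\tfrac{1}{4}=2$ reproduces the right-hand side of the proposition.

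Since the hard analytic content is packaged in Conjecture \ref{numconj}, there is no genuine obstacle: the argument is essentially bookkeeping. The only subtlety to track is the congruence $-\beta_1-3\beta_4\equiv\beta_4-\beta_1\pmod 4$ used to identify the correct residue class, and hence the exponent $\beta_1-\beta_4$ appearing in $(i^k)^{\beta_1-\beta_4}$.
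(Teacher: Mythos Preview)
Your proposal is correct and follows essentially the same approach as the paper: both arguments use the roots-of-unity filter to identify the sub-series of $\psi_{\gamma_1,\gamma_2,\beta_3,\beta_4}(x,y)$ supported in the residue class $-\beta_1-3\beta_4\pmod 4$ with the sum over $k=0,\dots,3$, and then invoke Conjecture \ref{numconj} coefficient-by-coefficient to match it with the left-hand side up to the stated error. The only minor slip is the clause ``for $n\ge 0$'' in your first paragraph; the $\sfA$-expression is a genuine Laurent series in $x^4$ (the prefactors $q^{\pm\alpha_i}$ in \eqref{defsfA} can produce negative powers), but this is irrelevant since all you actually need is the congruence class of the exponents, which you state correctly.
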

\begin{proof}
Recall the formula for $\psi(x,y) := \psi_{\gamma_1,\gamma_2,\beta_3,\beta_4}(x,y)$ of Conjecture \ref{numconj}. We see that the term in the sum on the right-hand side corresponding to $k=0$ equals $\psi(x,y)/4$. Define coefficients $f_n(y)$ by
$$
\psi(x,y) = \sum_{n=0}^{\infty} f_n(y) \, x^n.
$$
Then the right-hand side of the formula of the proposition equals
\begin{align*}
\sum_{k=0}^{3} \frac{(i^k)^{\beta_1 - \beta_4}}{4} \psi(i^k x,y) &=\sum_{k=0}^{3} \sum_{n=0}^{\infty} \frac{(i^k)^{\beta_1 - \beta_4 + n}}{4} f_n(y) \, x^n  \\
&=\sum_{n=0}^{\infty} \Bigg( \frac{1}{4}\sum_{k=0}^{3} \big(i^{\beta_1 - \beta_4 + n}\big)^k\Bigg) f_n(y) \, x^n \\
&= \sum_{n \equiv -\beta_1 + \beta_4 \mod 4}  f_n(y) \, x^n.
\end{align*}
Therefore we conclude that the right-hand side of the formula of the proposition is obtained from $\psi(x,y)$ by extracting all terms $x^n$ for which $n \equiv -\beta_1 - 3 \beta_4 \mod 4$ and up to order  $O(x^{\beta_1-2\beta_2+5\beta_4})$. The result follows from Conjecture \ref{numconj}.
\end{proof}

The same type of proof applied to Conjecture \ref{conj} implies the following.
\begin{proposition} \label{fixedc1prop}
Assume Conjecture \ref{conj}. Let $S$ be a smooth projective surface with $b_1(S) = 0$ and $p_g(S)>0$. Suppose the Seiberg-Witten basic classes of $S$ are $0$ and $K_S \neq 0$. Let $H, c_1$ be chosen such that there are no rank 2 strictly Gieseker $H$-semistable sheaves with first Chern class $c_1$. Then
\begin{align*} 
\sfZ_{S,c_1}(x,y) = 2\sum_{k=0}^3 (i^k)^{c_1^2 - \chi(\O_S)} \Bigg( \frac{\phi(i^k x,y)}{2} \Bigg)^{\chi(\O_S)} \Bigg(\frac{2\overline{\eta}(x^4)^2}{\theta_3(i^k x,y^{\frac{1}{2}})} \Bigg)^{K_S^2}.
\end{align*}
In particular, Corollary \ref{cor} in the introduction follows.
\end{proposition}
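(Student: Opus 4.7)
The plan is to follow exactly the pattern used for Proposition \ref{fixedc1propnum}, but applied to the closed form $\psi_S(x,y)$ of Conjecture \ref{conj} rather than to the universal expression $\sfA_{\underline{\alpha}}$. By Conjecture \ref{conj}, $\overline{\chi}_{-y}^\vir(M^H_S(2,c_1,c_2))$ is the coefficient of $x^{4c_2-c_1^2-3\chi(\O_S)}$ in $\psi_S(x,y)$. For fixed $c_1$, as $c_2$ ranges over $\Z$ the exponents $4c_2-c_1^2-3\chi(\O_S)$ run precisely through the residue class $\{n\in\Z : n\equiv -c_1^2-3\chi(\O_S)\pmod 4\}$, so $\sfZ_{S,c_1}(x,y)$ is obtained from $\psi_S(x,y)$ by extracting the terms in this residue class.

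The key tool is the standard roots-of-unity filter: for any formal series $f(x)=\sum_n a_n x^n$,
$$
\sum_{n\equiv r\,(4)} a_n x^n \;=\; \tfrac{1}{4}\sum_{k=0}^{3} i^{-kr}\, f(i^k x).
$$
Applying this with $f=\psi_S(\,\cdot\,,y)$ and $r=-c_1^2-3\chi(\O_S)$, and using $(i^k)^{4\chi(\O_S)}=1$ to rewrite $i^{-kr}=(i^k)^{c_1^2+3\chi(\O_S)}=(i^k)^{c_1^2-\chi(\O_S)}$, I would obtain
$$
\sfZ_{S,c_1}(x,y) \;=\; \tfrac{1}{4}\sum_{k=0}^{3}(i^k)^{c_1^2-\chi(\O_S)}\,\psi_S(i^k x,y).
$$
Substituting the explicit form of $\psi_S$ from Conjecture \ref{conj} and observing that $\overline{\eta}((i^k x)^4)=\overline{\eta}(x^4)$ because $i^{4k}=1$, the prefactor $8$ from $\psi_S$ combines with $\tfrac{1}{4}$ to produce the leading $2$ in the claimed formula, and one reads off the rest term by term.

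The deduction of Corollary \ref{cor} then amounts to the specialization $y=1$: one has $\theta_3(x,1)=\theta_3(x)$, $(i^k x)^2=(-1)^k x^2$, and
$$
\phi(x,1)\;=\;\prod_{n\ge 1}(1-x^{2n})^{-12}\;=\;\overline{\eta}(x^2)^{-12},
$$
so that $\bigl(\phi(i^k x,1)/2\bigr)^{\chi(\O_S)} = \bigl(2\,\overline{\eta}((-1)^k x^2)^{12}\bigr)^{-\chi(\O_S)}$, matching Corollary \ref{cor} exactly. There is no serious obstacle here: granted Conjecture \ref{conj}, the whole argument reduces to the roots-of-unity filter together with the $\Z/4$-periodicities $i^{4k}=1$ and $(i^k)^{4\chi(\O_S)}=1$. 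The only care required is bookkeeping of the congruence class $n\equiv -c_1^2-3\chi(\O_S)\pmod 4$ and rewriting the resulting character $(i^k)^{c_1^2+3\chi(\O_S)}$ in the symmetric form $(i^k)^{c_1^2-\chi(\O_S)}$ that appears in the statement.
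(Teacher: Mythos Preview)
Your proposal is correct and follows essentially the same approach as the paper: the roots-of-unity filter extracting the residue class $n\equiv -c_1^2-3\chi(\O_S)\pmod 4$ from $\psi_S(x,y)$, together with the rewriting $(i^k)^{c_1^2+3\chi(\O_S)}=(i^k)^{c_1^2-\chi(\O_S)}$ and the observation $\overline{\eta}((i^kx)^4)=\overline{\eta}(x^4)$. Your deduction of Corollary \ref{cor} via $\phi(x,1)=\overline{\eta}(x^2)^{-12}$ is likewise exactly what is intended.
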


Using Conjecture \ref{numconj} we can do better:
\begin{proposition} \label{nonmingt}
Assume Conjecture \ref{numconj} and the strong form of Mochizuki's formula (Remark \ref{strongform}). 
Let $S_0$ be a smooth projective surface with $b_1(S_0) = 0$ and $p_g(S_0)>0$. Suppose the Seiberg-Witten basic classes of $S_0$ are $0$ and $K_{S_0} \neq 0$. Suppose $S$ is obtained from an iterated blow-up (possibly at infinitely near points) of $S_0$ and let $E_1,\cdots, E_m$ denote the total transforms of the exceptional divisors. Suppose furthermore that $K_S^2\ge \chi(\O_S)-3$. Let $H, c_1$ be chosen such that there exist no rank 2 strictly Gieseker $H$-semistable sheaves on $S$ with first Chern class $c_1$. Then
\begin{align*}
&\sfZ_{S,c_1}(x,y) = \\
&2\sum_{k=0}^3 (i^k)^{c_1^2-\chi(\O_S)}\Bigg(\frac{\phi(i^k x,y)}{2}\Bigg)^{\chi(\O_S)}\Bigg(\frac{2\overline{\eta}(x^4)^2}{\theta_3(i^k x,y^{\frac{1}{2}})}\Bigg)^{K_{S}^2} \prod_{j=1}^m\Bigg(1+(-1)^{c_1E_j} \frac{\theta_3(-i^kx,y^{\frac{1}{2}})}{\theta_3(i^kx,y^{\frac{1}{2}})}\Bigg).
\end{align*}
\end{proposition}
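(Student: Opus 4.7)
The plan is to apply the strong form of Mochizuki's formula (Corollary \ref{univcor} combined with Remark \ref{strongform}) and invoke Conjecture \ref{numconj} pair by pair, following the pattern of the proofs of Proposition \ref{numconjimpliesconj} and Proposition \ref{fixedc1prop}, but with an enlarged set of Seiberg--Witten basic classes produced by the iterated blow-up. The only genuinely new ingredients are the identification of these basic classes of $S$ and a combinatorial recognition of a product formula once the sum is expanded.

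In Mochizuki's convention $\SW(a)=\widetilde{\SW}(2a-K_S)$, iterating the standard blow-up formula for Seiberg--Witten invariants shows the basic classes of $S$ are exactly
$$\Big\{\textstyle\sum_{j\in J}E_j:J\subseteq\{1,\ldots,m\}\Big\}\cup\Big\{K_S-\textstyle\sum_{j\in J}E_j:J\subseteq\{1,\ldots,m\}\Big\},$$
with $\SW$ values $1$ and $(-1)^{\chi(\O_S)}$ respectively, using $\chi(\O_S)=\chi(\O_{S_0})$. Since the total transforms satisfy $E_j\cdot E_k=-\delta_{jk}$ and $E_j\cdot\pi^*D=0$ for $D\in\Pic(S_0)$, for $a_1=\sum_{j\in J}E_j$ one obtains $a_1^2=a_1K_S=-|J|$ and $a_1c_1=\sum_{j\in J}c_1E_j$. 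The identity $a_1^2=a_1K_S$ puts the pair $\{a_1,K_S-a_1\}$ in the form required by Conjecture \ref{numconj}, with
$$\gamma_1=-|J|,\quad \gamma_2=\textstyle\sum_{j\in J}c_1E_j,\quad (\beta_1,\beta_2,\beta_3,\beta_4)=(c_1^2,c_1K_S,K_S^2,\chi(\O_S)),$$
and the assumption $K_S^2\geq\chi(\O_S)-3$ is precisely $\beta_3\geq\beta_4-3$.

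Applying Proposition \ref{fixedc1propnum} to each of the $2^m$ pairs and summing their contributions to $\sfZ_{S,c_1}(x,y)$, one obtains
\begin{align*}
2\!\!\sum_{J\subseteq\{1,\ldots,m\}}\!\!(-1)^{\gamma_2}\sum_{k=0}^3(i^k)^{c_1^2-\chi(\O_S)}&\bigg(\frac{\phi(i^kx,y)}{2}\bigg)^{\chi(\O_S)}\bigg(\frac{2\overline{\eta}(x^4)^2}{\theta_3(i^kx,y^{1/2})}\bigg)^{K_S^2}\\
&\times\bigg(\frac{\theta_3(i^kx,y^{1/2})}{\theta_3(-i^kx,y^{1/2})}\bigg)^{-|J|}
\end{align*}
modulo $O(x^{c_1^2-2c_1K_S+5\chi(\O_S)})$. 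Using $(-1)^{\gamma_2}=\prod_{j\in J}(-1)^{c_1E_j}$ and the elementary identity
$$\sum_{J\subseteq\{1,\ldots,m\}}\prod_{j\in J}\big[(-1)^{c_1E_j}\xi_j\big]=\prod_{j=1}^m\big(1+(-1)^{c_1E_j}\xi_j\big)$$
with $\xi_j=\theta_3(-i^kx,y^{1/2})/\theta_3(i^kx,y^{1/2})$ produces exactly the stated right-hand side, modulo this same error.

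The main obstacle is removing this error to obtain the formula at every order in $x$. This is achieved by the same device used in the proof of Proposition \ref{numconjimpliesconj}: replace $c_1$ by $c_1+2tH$ for $t\in\Z_{>0}$. Tensoring by $\O(tH)$ gives $\sfZ_{S,c_1}(x,y)=\sfZ_{S,c_1+2tH}(x,y)$ on the left, while on the right the parities $(-1)^{c_1E_j}$ are unchanged (since $2tH\cdot E_j$ is even) and the exponent $c_1^2-\chi(\O_S)$ shifts by a multiple of $4$, leaving $(i^k)^{c_1^2-\chi(\O_S)}$ invariant; all remaining factors depend only on $K_S^2$ and $\chi(\O_S)$. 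Since $H^2>0$, the error exponent $(c_1+2tH)^2-2(c_1+2tH)K_S+5\chi(\O_S)$ grows quadratically in $t$, so any prescribed coefficient of $x^n$ can be recovered by taking $t$ large enough; assumption $(i)$ of Corollary \ref{univcor} becomes automatic at that point.
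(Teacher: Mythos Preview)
Your proof is correct and follows essentially the same approach as the paper: identify the Seiberg--Witten basic classes of the iterated blow-up, pair each $E_J$ with $K_S-E_J$ so that Conjecture \ref{numconj} applies (via Proposition \ref{fixedc1propnum}), and then recognise the resulting sum over subsets $J$ as the stated product. The paper's own proof is slightly terser---it notes the identity $K_S(K_S-E_I)=(K_S-E_I)^2=K_S^2-E_I^2$ to justify the $(\gamma_1,\gamma_2)$ structure and leaves the removal of the $O(x^{c_1^2-2c_1K_S+5\chi(\O_S)})$ error implicit---whereas you make the $c_1\mapsto c_1+2tH$ step and the invariance of the right-hand side under this shift explicit, which is a welcome addition.
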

\begin{proof} 
The surface $S$ is obtained from $S_0$ by an iterated blow-up in (possibly infinitely near) points $\pi : S \rightarrow S_0$. We denote by $E_1, \ldots, E_m$ the total transforms on the blow-ups. Write $M:=\{1,\ldots,m\}$, and for a subset $I\subset M$ write $E_I=\sum_{i\in I} E_i$. Then $K_S=K_{S_0}+E_M$. Moreover the Seiberg-Witten basic classes are the $E_I$ (with Seiberg-Witten invariant $1$) and the $K_{S_0}+E_I=K_S-E_{M-I}$ (with Seiberg-Witten invariant $(-1)^{\chi(\O_S)}$)
 for all $I\subset M$. E.g.~see \cite[Thm.~7.4.6]{Mor} for Seiberg-Witten invariants of blow-ups. We denote by $|I|$ the number of elements of $I$. Now note that 
 $$
 K_S(K_S-E_I)=(K_S-E_I)^2=K_S^2-E_I^2.
 $$
 By the strong form of Mochizuki's formula (Corollary \ref{univcor} and Remark \ref{strongform}) and Proposition \ref{fixedc1propnum}, we obtain the following equation modulo $x^{c_1^2-2c_1K_S+5\chi(\O_S)}$ 
 \begin{align*}
 &\sfZ_{S,c_1}(x,y) =\sum_{I\subset M}\Coeff_{s^0}\Big[\Big((x y^{-\frac{1}{2}})^{-c_1^2-3\chi(\O_S)}\sfA_{(E_I^2,E_Ic_1,c_1^2,E_IK_S,c_1K_S,K_S^2,\chi(\O_S))}(s,y,(x y^{-\frac{1}{2}})^4)\\&\quad+(-1)^{\chi(\O_S)}(x y^{-\frac{1}{2}})^{-c_1^2-3\chi(\O_S)}
 \sfA_{(K_{S}^2-E_I^2,c_1K_{S}-c_1E_I,c_1^2,K_S^2-E_I^2,c_1K_S,K_S^2,\chi(\O_S))}(s,y,(x y^{-\frac{1}{2}})^4)\Big]\\
 &=2\sum_{I\subset M}\sum_{k=1}^3 (i^k)^{c_1^2-\chi(\O_S)}\Bigg(\frac{\phi(i^k x,y)}{2}\Bigg)^{\chi(\O_S)}\Bigg(\frac{2\overline{\eta}(x^4)^2}{\theta_3(i^k x,y^{\frac{1}{2}})}\Bigg)^{K_S^2} (-1)^{c_1E_I} \Bigg(\frac{\theta_3(-i^k x,y^{\frac{1}{2}})}{\theta_3(i^kx,y^{\frac{1}{2}})}\Bigg)^{|I|},
 \end{align*}
 where we replaced $E_{M-I}$ by $E_I$ for all terms with Seiberg-Witten invariant $(-1)^{\chi(\O_S)}$. After interchanging the sums we get the formula of the proposition.
 \end{proof}

\subsection{Arbitrary surfaces with holomorphic 2-form}

We present the following conjecture about virtual $\chi_y$-genera of moduli spaces of rank 2 sheaves on \emph{arbitrary} smooth projective surfaces $S$ with $b_1(S)=0$ and $p_g(S)>0$. Although this conjecture is strictly stronger than Conjecture \ref{conj}, the latter is a little easier to state and was therefore the focus of the introduction. We provide some evidence for this conjecture in Sections \ref{K3sec}, \ref{ellipticevidence}, and \ref{blowupevidence}. 

\begin{conjecture}\label{generalsurfconj}
Let $S$ be a smooth projective surface with $b_1(S) = 0$ and $p_g(S)>0$. Let $H,c_1,c_2$ be chosen such that there are no rank 2 strictly Gieseker $H$-semistable sheaves with Chern classes $c_1,c_2$ and let $M:=M_S^H(2,c_1,c_2)$. Then $\overline{\chi}_{-y}^{\vir}(M)$ equals the coefficient of $x^{\vd(M)}$ of 
\begin{align*}
\psi_{S,c_1}(x,y) &:= 4 \Bigg(\frac{\phi(x,y)}{2}\Bigg)^{\chi(\O_S)}\Bigg(\frac{2 \overline{\eta}(x^4)^2}{\theta_3(x,y^{\frac{1}{2}})}\Bigg)^{K_{S}^2}  \sum_{a \in H^2(S,\Z)} \SW(a)(-1)^{c_1 a} \Bigg(\frac{\theta_3(x,y^{\frac{1}{2}})}{\theta_3(-x,y^{\frac{1}{2}})}\Bigg)^{a K_S},
\end{align*}
where
$$
\phi(x,y) :=\prod_{n=1}^{\infty} \frac{1}{(1-x^{2n})^{10}(1-x^{2n} y)(1-x^{2n} y^{-1})}.
$$
\end{conjecture}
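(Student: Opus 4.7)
The overall strategy is to reduce the formula to a universal identity among the series $A_1,\ldots,A_7$ of Proposition \ref{univ}, generalizing Conjecture \ref{numconj}, and then to attempt that identity by the same kind of toric analysis that produces the evidence in Section \ref{toricsec}.

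First I would apply the strong form of Mochizuki's formula (Corollary \ref{univcor} together with Remark \ref{strongform}) to write
\begin{equation*}
\chi_{-y}^{\vir}(M) = \Coeff_{s^0 q^{c_2}} \sum_{a \in H^2(S,\Z)} \SW(a)\,\sfA_{(a^2,\, a c_1,\, c_1^2,\, a K_S,\, c_1 K_S,\, K_S^2,\, \chi(\O_S))}(s,y,q),
\end{equation*}
where the sum now runs over all Seiberg-Witten basic classes. The bound $\vd(M) < \tfrac{1}{2}(c_1^2 - c_1 K_S) + 2\chi(\O_S)$ imposed by Corollary \ref{univcor}(i) can be removed by replacing $c_1$ by $c_1 + 2tH$ for $t\gg 0$, exactly as in the proof of Proposition \ref{fixedc1prop}, so that the resulting identity is forced on every coefficient of $\sfZ_{S,c_1}(x,y)$.

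Next, using the Seiberg-Witten involution $\SW(K_S - a) = (-1)^{\chi(\O_S)}\SW(a)$, I would pair each class $a$ with its reflection $K_S-a$. The same involution pairs the corresponding summands on the right-hand side of the conjecture, and after passing from $\overline{\chi}_{-y}^{\vir}$ to $\chi_{-y}^{\vir}$ via Remark \ref{bar}, the statement becomes a universal identity in the $A_i$ of the type appearing in Conjecture \ref{numconj}. The crucial difference is that the coupling $a^2 = a K_S$ built into Conjecture \ref{numconj}, which is automatic on the exceptional classes of a blow-up of a surface with basic classes $0$ and $K_{S_0}$ (compare the proof of Proposition \ref{nonmingt}), must be dropped: on a general surface with $p_g(S)>0$ the three intersection numbers $\gamma_1 = a^2$, $\gamma_2 = a c_1$, $\gamma_3 = a K_S$ of a basic class are completely independent. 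Concretely, one needs to establish an identity for the symmetrised combination
\begin{equation*}
\sfA_{(\gamma_1,\gamma_2,\beta_1,\gamma_3,\beta_2,\beta_3,\beta_4)}(s,y,q) + (-1)^{\beta_4} \sfA_{(\beta_3 - 2\gamma_3 + \gamma_1,\, \beta_2 - \gamma_2,\, \beta_1,\, \beta_3 - \gamma_3,\, \beta_2,\, \beta_3,\, \beta_4)}(s,y,q),
\end{equation*}
matching it, up to a computable order in $x$, with the $a$- and $(K_S-a)$-contributions extracted from $\psi_{S,c_1}(x,y)$.

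The hard part will be establishing this generalised universal identity. By Proposition \ref{univ} it collapses to a single identity between specific monomials in $A_1,\ldots,A_7$ and the explicit theta and eta factors in $\psi_{S,c_1}(x,y)$, so a satisfactory proof really demands closed-form modular expressions for $A_1,\ldots,A_7$, presumably as Jacobi-type theta functions, which at present we cannot give. Lacking this, one can verify the identity order by order in $x$ via the toric algorithm of Section \ref{toricsec}, and this is exactly the evidence we offer in Sections \ref{K3sec}, \ref{ellipticevidence}, and \ref{blowupevidence}. A genuine proof would most likely need either a $K$-theoretic refinement of Mochizuki's wall-crossing on the master space of Bradlow pairs that tracks the class $\sfT^{\C^*}_{-y}(T^\vir,1-y)$ directly, or a Nekrasov-type conjecture identifying the universal series with periods of an appropriate family of elliptic curves, in the spirit of \cite{NY2,NO,BE,GNY3}.
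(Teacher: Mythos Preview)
The statement is a \emph{conjecture}; the paper does not prove it, and neither can your outline. Your final paragraph essentially acknowledges this, so what you have written is a strategy rather than a proof. That said, the strategy contains a concrete error that is worth correcting.

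You claim that on a general surface with $p_g(S)>0$ the three numbers $\gamma_1=a^2$, $\gamma_2=ac_1$, $\gamma_3=aK_S$ attached to a Seiberg--Witten basic class $a$ are independent, and that therefore Conjecture~\ref{numconj} (which has $\gamma_1$ in both the first and the fourth slot of $\sfA$) must be generalised. This is false: for Seiberg--Witten basic classes on any surface with $p_g(S)>0$ one always has $a^2=aK_S$; the paper itself cites this as \cite[Prop.~6.3.1]{Moc} just below Conjecture~\ref{generalsurfconj}. With that identity the 7-tuple for $a$ is $(\gamma_1,\gamma_2,\beta_1,\gamma_1,\beta_2,\beta_3,\beta_4)$ and the 7-tuple for $K_S-a$ is $(\beta_3-\gamma_1,\beta_2-\gamma_2,\beta_1,\beta_3-\gamma_1,\beta_2,\beta_3,\beta_4)$, which is \emph{exactly} the pair appearing in Conjecture~\ref{numconj}. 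So no new universal identity is needed; the symmetrised combination you wrote with a separate $\gamma_3$ collapses to the one already stated.

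The genuine obstruction to deducing Conjecture~\ref{generalsurfconj} from Conjecture~\ref{numconj} plus the strong form of Mochizuki's formula is not the shape of the identity but its side condition $\beta_3\ge\beta_4-3$, i.e.\ $K_S^2\ge\chi(\O_S)-3$. This was verified in the proof of Proposition~\ref{numconjimpliesconj} only under the extra hypothesis that the basic classes are $0$ and $K_S\neq 0$; for arbitrary $S$ with $b_1=0$, $p_g>0$ it can fail (e.g.\ elliptic surfaces $E(n)$ with $n\ge 4$ have $K_S^2=0$ but $\chi(\O_S)=n$). So even granting both Conjecture~\ref{numconj} and the strong Mochizuki formula, your reduction does not cover all surfaces targeted by Conjecture~\ref{generalsurfconj}.

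Two minor corrections: the $c_1\mapsto c_1+2tH$ trick you invoke is carried out in the proof of Proposition~\ref{numconjimpliesconj}, not of Proposition~\ref{fixedc1prop}; and the bound from Corollary~\ref{univcor}(i) is $c_2<\tfrac12 c_1(c_1-K_S)+2\chi(\O_S)$, which translates to $\vd< c_1^2-2c_1K_S+5\chi(\O_S)$, not the expression you wrote.
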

Assuming this conjecture and when there are no strictly Gieseker $H$-semistable sheaves with first Chern class $c_1$, the same calculation as in Section \ref{fixedc1sec} gives
 \begin{equation} \label{Zpsi}
 \sfZ_{S,c_1}(x,y) = \frac{1}{2} \psi_{S,c_1}(x,y) + \frac{1}{2} i^{c_1^2 - \chi(\O_S)} \psi_{S,c_1}(ix,y),
\end{equation}
where $i = \sqrt{-1}$. Specializing to $y=1$ gives (part of) a formula from the physics literature due to Dijkgraaf-Park-Schroers, namely terms two and three of \cite[Eqn.~(6.1)]{DPS}.\footnote{Up to an overall factor $x^{-\chi(\O_S) + K_S^2/3}$ coming from our choice of normalization.} 
This involves a bit of rewriting using 
\begin{equation} \label{rewritetheta}
\theta_3(i^k x)=\theta_3(x^4)+i^k\theta_2(x^4),
\end{equation} 
$\SW(a) = (-1)^{\chi(\O_S)} \SW(K_S-a)$ \cite[Cor.~6.8.4]{Mor}, and $a^2=aK_S$ for Seiberg-Witten basic classes \cite[Prop.~6.3.1]{Moc}.

\begin{remark}
A straight-forward calculation shows that this conjecture implies both Proposition \ref{fixedc1prop} (without assuming Conjecture \ref{conj}) and Proposition \ref{nonmingt} (without assuming Conjecture \ref{numconj} and without assuming $K_S^2 \geq \chi(\O_S)-3$). In fact, this conjecture implies Conjecture \ref{conj}.
\end{remark}

The first application of Conjecture \ref{generalsurfconj} is the following blow-up formula.
\begin{proposition} \label{blowupcor}
Assume Conjecture \ref{generalsurfconj} holds. Let $\pi : \widetilde{S} \rightarrow S$ be the blow-up in a point of a smooth projective surface $S$ with $b_1(S) = 0$, $p_g(S)>0$. Suppose $H, c_1$ are chosen such that there are no rank 2 strictly Gieseker $H$-semistable sheaves with first Chern class $c_1$. Let $\widetilde{c}_1 = \pi^* c_1 - \epsilon E$ with $\epsilon=0,1$ and suppose $\widetilde{H}$ is a polarization on $\widetilde{S}$ such that there are no rank 2 strictly Gieseker $\widetilde{H}$-semistable sheaves on $\widetilde{S}$ with first Chern class $\widetilde{c}_1$. Then
\begin{align*}
\sfZ_{\widetilde{S}, \widetilde{c}_1}(x,y) = \begin{cases} \frac{\theta_3(x^4,y)}{\overline{\eta}(x^4)^2} \, \sfZ_{S, c_1}(x,y), & \epsilon=0  \\ \frac{\theta_2(x^4,y)}{\overline{\eta}(x^4)^2} \, \sfZ_{S, c_1}(x,y), & \epsilon=1. \end{cases}
\end{align*}
\end{proposition}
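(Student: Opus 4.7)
The plan is to apply Conjecture \ref{generalsurfconj} on both sides and reduce the identity to a purely theta-function identity. Write $\widetilde{S} = \mathrm{Bl}_p S$ with exceptional divisor $E$. The blow-up satisfies $\chi(\O_{\widetilde{S}}) = \chi(\O_S)$, $K_{\widetilde{S}} = \pi^* K_S + E$, $K_{\widetilde{S}}^2 = K_S^2 - 1$, and $\widetilde{c}_1^2 = c_1^2 - \epsilon$ (using $\epsilon^2=\epsilon$). By the blow-up formula for Seiberg-Witten invariants (see \cite[Thm.~7.4.6]{Mor}) the basic classes of $\widetilde{S}$ in Mochizuki's convention are $\widetilde{a} = \pi^* a$ and $\widetilde{a} = \pi^* a + E$ as $a$ runs over the basic classes of $S$, all with $\SW_{\widetilde{S}}(\widetilde{a}) = \SW_S(a)$.

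First I compute the intersection numbers appearing in $\psi_{\widetilde{S},\widetilde{c}_1}$: for $\widetilde{a} = \pi^* a$ we have $\widetilde{a} K_{\widetilde{S}} = aK_S$ and $\widetilde{c}_1\widetilde{a} = c_1 a$, while for $\widetilde{a} = \pi^* a + E$ we have $\widetilde{a} K_{\widetilde{S}} = aK_S - 1$ and $\widetilde{c}_1\widetilde{a} = c_1 a + \epsilon$. Collecting the two contributions, the SW sum in $\psi_{\widetilde{S},\widetilde{c}_1}$ factors as the SW sum in $\psi_{S,c_1}$ times $\bigl(1 + (-1)^\epsilon \theta_3(-x,y^{1/2})/\theta_3(x,y^{1/2})\bigr)$. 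Together with the prefactor shift from $K_{\widetilde{S}}^2 = K_S^2 - 1$, this gives
\[
\psi_{\widetilde{S},\widetilde{c}_1}(x,y) \;=\; \frac{\theta_3(x,y^{1/2}) + (-1)^\epsilon \theta_3(-x,y^{1/2})}{2\overline{\eta}(x^4)^2}\,\psi_{S,c_1}(x,y).
\]
Splitting the theta series $\theta_3(x,y^{1/2}) = \sum_n x^{n^2} y^{n/2}$ into even and odd $n$ and comparing with the definitions \eqref{thetas} yields $\theta_3(x,y^{1/2}) + \theta_3(-x,y^{1/2}) = 2\theta_3(x^4,y)$ and $\theta_3(x,y^{1/2}) - \theta_3(-x,y^{1/2}) = 2\theta_2(x^4,y)$, so the prefactor above is $\theta_3(x^4,y)/\overline{\eta}(x^4)^2$ when $\epsilon=0$ and $\theta_2(x^4,y)/\overline{\eta}(x^4)^2$ when $\epsilon=1$.

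Finally I combine with the identity \eqref{Zpsi}, which gives $\sfZ_{\widetilde{S},\widetilde{c}_1}(x,y) = \tfrac12 \psi_{\widetilde{S},\widetilde{c}_1}(x,y) + \tfrac12 i^{\widetilde{c}_1^2 - \chi(\O_{\widetilde{S}})} \psi_{\widetilde{S},\widetilde{c}_1}(ix,y)$. Since $(ix)^4 = x^4$, evaluating the boxed identity at $ix$ produces $\theta_3(ix,y^{1/2}) \pm \theta_3(-ix,y^{1/2})$; because $i^{n^2}$ equals $1$ for $n$ even and $i$ for $n$ odd, this evaluates to $2\theta_3(x^4,y)$ when $\epsilon=0$ and to $2i\,\theta_2(x^4,y)$ when $\epsilon=1$. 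In the $\epsilon=1$ case the extra factor $i$ combines with $i^{\widetilde{c}_1^2 - \chi(\O_{\widetilde{S}})} = i^{-1}\cdot i^{c_1^2 - \chi(\O_S)}$ to reproduce exactly $i^{c_1^2 - \chi(\O_S)}$, so in both cases the $\theta_j(x^4,y)/\overline{\eta}(x^4)^2$ factor is common to the $\psi_{S,c_1}(x,y)$ and $\psi_{S,c_1}(ix,y)$ pieces, and another application of \eqref{Zpsi} assembles the bracketed expression into $\sfZ_{S,c_1}(x,y)$.

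The only delicate point is the parity bookkeeping in the last paragraph: the ``missing'' factor $i$ that appears when splitting $\theta_3(\pm ix,y^{1/2})$ in the $\epsilon=1$ case must exactly cancel the change $i^{\widetilde{c}_1^2-\chi(\O_{\widetilde{S}})}/i^{c_1^2-\chi(\O_S)} = i^{-\epsilon}$, and this is the main thing to verify carefully; everything else is straightforward substitution into Conjecture \ref{generalsurfconj}.
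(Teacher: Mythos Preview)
Your proof is correct and follows essentially the same route as the paper's: both use the blow-up formula for Seiberg--Witten invariants to factor the sum in $\psi_{\widetilde{S},\widetilde{c}_1}$, combine with the shift $K_{\widetilde{S}}^2=K_S^2-1$ to obtain $\psi_{\widetilde{S},\widetilde{c}_1}=\frac{\theta_3(x,y^{1/2})+(-1)^\epsilon\theta_3(-x,y^{1/2})}{2\overline{\eta}(x^4)^2}\,\psi_{S,c_1}$, and then simplify via the even/odd split of $\theta_3$. The paper's proof is terser in the last step (it just says ``the result follows'' after reaching the $\psi$-identity), whereas you spell out the passage from $\psi$ to $\sfZ$ via \eqref{Zpsi} and the parity bookkeeping with $i^{\widetilde{c}_1^2-\chi(\O_{\widetilde{S}})}=i^{-\epsilon}i^{c_1^2-\chi(\O_S)}$; this extra detail is correct and clarifies exactly what the paper leaves implicit.
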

\begin{proof}
The Seiberg-Witten basic classes of $\widetilde{S}$ are $\pi^*a$, $\pi^*a+E$, where $a$ is a Seiberg-Witten basic class of $S$ and the corresponding Seiberg-Witten invariants are \cite[Thm.~7.4.6]{Mor}
\begin{align*}
\SW(\pi^*a) = \SW(\pi^*a+E) = \SW(a). 
\end{align*}
Using $\chi(\O_{\widetilde{S}}) = \chi(\O_S)$, $K_{\widetilde{S}} = \pi^* K_{S}+E$, $K_{\widetilde{S}}^2 = K_S^2-1$, Conjecture \ref{generalsurfconj} implies
$$
\psi_{\widetilde{S},\widetilde{c}_1}(x,y) = \frac{1}{2} \Bigg( \frac{\theta_3(x,y^{\frac{1}{2}})}{\overline{\eta}(x^4)^2} + (-1)^{\epsilon} \frac{\theta_3(-x,y^{\frac{1}{2}})}{\overline{\eta}(x^4)^2} \Bigg) \psi_{S,c_1}(x,y).
$$
Specializing to $\epsilon=0,1$ and using \eqref{rewritetheta} the result follows.
\end{proof}

For $y=1$ this blow-up formula appears in a physics context in \cite[Sect.~4.3]{VW}. 

\begin{remark}
Let $\pi : \widetilde{S} \rightarrow S$ be the blow-up in a point of a simply connected smooth projective surface $S$. Let $H, c_1$ be chosen on $S$ such that $c_1 H$ is odd. Let $\widetilde{H} = r \pi^* H - E$ for $r \gg 0$ and $\widetilde{c}_1 = \pi^* c_1 - \epsilon E$ for $\epsilon=0,1$ such that $\widetilde{c}_1 \widetilde{H}$ is odd as well. One can show that the moduli spaces $M_{\widetilde{S}}^{\widetilde{H}}(2,\widetilde{c}_1,c_2)$ do not depend on the choice of such $\widetilde{H}$ \cite{LQ1}. In this setting Li-Qin \cite{LQ1, LQ2} derived a blow-up formula for the virtual Hodge polynomials of these moduli spaces\footnote{Here the adjective ``virtual'' does not refer to virtual cycles. The definition of virtual Hodge polynomials involves Deligne's weight filtration. They coincide with ordinary Hodge polynomials for smooth projective varieties.}  
$$
h(M_S^H(2,c_1,c_2), x_1, x_2), \, h(M_{\widetilde{S}}^{\widetilde{H}}(2,\widetilde{c}_1,c_2), x_1, x_2).
$$
Normalize the virtual Hodge polynomials as follows
\begin{align*}
\overline{h}(M_S^H(2,c_1,c_2), x_1, x_2) &= (x_1 x_2)^{ - \vd(M_S^H(2,c_1,c_2))/2} h(M_S^H(2,c_1,c_2), x_1, x_2), \\
\overline{h}(M_{\widetilde{S}}^{\widetilde{H}}(2,\widetilde{c}_1,c_2), x_1, x_2) &= (x_1 x_2)^{ - \vd(M_{\widetilde{S}}^{\widetilde{H}}(2,\widetilde{c}_1,c_2)/2}  h(M_{\widetilde{S}}^{\widetilde{H}}(2,\widetilde{c}_1,c_2), x_1, x_2).
\end{align*}
Then Li-Qin's formula reads (see also \cite[Rem.~3.2]{Got2})
$$
\sum_{c_2} \overline{h}(M_{\widetilde{S}}^{\widetilde{H}}(2,\widetilde{c}_1,c_2), x_1, x_2) x^{4c_2 - \widetilde{c}_{1}^2} = \begin{cases} \frac{\theta_3(x^4,x_1x_2)}{\overline{\eta}(x^4)^2} \sum_{c_2} \overline{h}(M_S^H(2,c_1,c_2), x_1, x_2) x^{4c_2 - c_{1}^2}, & \epsilon=0  \\ \frac{\theta_2(x^4,x_1x_2)}{\overline{\eta}(x^4)^2} \sum_{c_2} \overline{h}(M_S^H(2,c_1,c_2), x_1, x_2) x^{4c_2 - c_{1}^2}, & \epsilon=1. \end{cases}
$$
When specializing to $x_1 = y$ and $x_2=1$, this gives the ratios of Proposition \ref{blowupcor}. Hence the blow-up formula for virtual $\chi_y$-genera (virtual in the sense of virtual classes) and the blow-up formula for $\chi_y$-genera (defined via virtual Hodge polynomials) coincide. In particular, the blow-up formula for virtual Euler characteristics and classical Euler characteristics involve the same ratio as well.
\end{remark}

The second application of Conjecture \ref{generalsurfconj} is to surfaces with canonical divisor with irreducible reduced connected components.
\begin{proposition} \label{disconn}
Assume Conjecture \ref{generalsurfconj} holds. Let $S$ be a smooth projective surface with $b_1(S) = 0$, $p_g(S)>0$, and suppose $C_1+ \cdots +C_m \in |K_S|$, where $C_1, \ldots, C_m$ are mutually disjoint irreducible reduced curves. Suppose $H,c_1$ are chosen such that there are no rank 2 strictly Gieseker $H$-semistable sheaves with first Chern class $c_1$. Then
\begin{align*} 
&\sfZ_{S,c_1}(x,y) = 2 \Bigg(\frac{\phi(x,y)}{2}\Bigg)^{\chi(\O_S)} \prod_{j=1}^{m} \Bigg\{ \Bigg( \frac{2\overline{\eta}(x^4)^2}{\theta_3(x,y^{\frac{1}{2}})} \Bigg)^{C_j^2} + (-1)^{c_1 C_j+h^0(N_{C_j/S})} \Bigg( \frac{2\overline{\eta}(x^4)^2}{\theta_3(-x,y^{\frac{1}{2}})} \Bigg)^{C_j^2} \Bigg\} \\
&+ 2(-i)^{c_1^2 - \chi(\O_S)} \Bigg(\frac{\phi(-ix,y)}{2}\Bigg)^{\chi(\O_S)}  \prod_{j=1}^{m} \Bigg\{ \Bigg( \frac{2\overline{\eta}(x^4)^2}{\theta_3(-i x,y^{\frac{1}{2}})} \Bigg)^{C_j^2} + (-1)^{c_1 C_j+h^0(N_{C_j/S)}} \Bigg( \frac{2\overline{\eta}(x^4)^2}{\theta_3(i x,y^{\frac{1}{2}})} \Bigg)^{C_j^2} \Bigg\}
\end{align*}
where $i = \sqrt{-1}$ and $N_{C_j/S}$ denotes the normal bundle of $C_j \subset S$.
\end{proposition}

Specializing to $y=1$ and using $\theta_3(i^k x)=\theta_3(x^4)+i^k\theta_2(x^4)$ gives a more explicit version of \cite[Eqn.~(5.45)]{VW}.\footnote{Up to an overall factor $x^{-\chi(\O_S) + K_S^2/3}$ coming from our choice of normalization.}
Before we prove this proposition, we need three lemmas about disconnected curves and their Seiberg-Witten invariants.
\begin{lemma} \label{lemm1}
Let $C, D$ be irreducible reduced mutually disjoint curves on a smooth projective surface $S$ with $b_1(S)=0$. Then precisely one of the following is true:
\begin{itemize}
\item $C$ or $D$ is rigid, i.e.~$|C|$ or $|D|$ is 0-dimensional.
\item $|C| = |D| \cong \PP^1$ is a pencil.
\end{itemize}
\end{lemma}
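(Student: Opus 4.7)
The two alternatives in the lemma are mutually exclusive, since case 2 forces $\dim|C|=\dim|D|=1>0$; I must therefore show that if $\dim|C|\ge 1$ and $\dim|D|\ge 1$, then $|C|=|D|\cong\PP^1$.

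First I will verify that $C^2=D^2=0$. Since $C,D$ are irreducible reduced and move in positive-dimensional linear systems, picking any other member $C'\in|C|\setminus\{C\}$, one has $C\not\subset C'$ (otherwise $|C-C|=|0|$ would contain a nontrivial element), so $C\cdot C'=C^2\ge 0$, and similarly $D^2\ge 0$. From $C\cap D=\emptyset$ we get $C\cdot D=0$. If $C^2>0$, the Hodge index theorem applied to the effective, hence non-numerically-trivial, class $D$ satisfying $C\cdot D=0$ would force $D^2<0$, contradicting $D^2\ge 0$. Symmetrically $D^2=0$.

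Next I show $C\sim D$ and that $|C|$ is base-point-free. Fix any $p\in D$; since $\dim|C|\ge 1$, there is $C_p\in|C|$ with $p\in C_p$. As $C_p\cdot D=0$ while $C_p\cap D\ni p$, $D$ cannot meet $C_p$ properly and must therefore be a component of $C_p$, so $C-D$ is linearly equivalent to an effective divisor. Swapping the roles of $C$ and $D$ (which uses $\dim|D|\ge 1$) shows the same for $D-C$, whence $C\sim D$ and $|C|=|D|$. Since $C$ and $D$ are distinct disjoint members of this system, its base locus is empty.

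The main obstacle is the third step: showing $\dim|C|=1$. The base-point-free system $|C|$ defines a morphism $\phi\colon S\to\PP^n$ with $n=\dim|C|$, and since $C^2=0$ the image is a curve. Taking the Stein factorization $\phi=g\circ f$ yields a fibration $f\colon S\to B$ with connected fibers onto a smooth projective curve $B$, and every element of $|C|$ takes the form $f^*\bigl(\sum_j n_j b_j\bigr)$. Decomposing each scheme-theoretic fiber $f^{-1}(b_j)$ into irreducible components $R_{jk}$ with multiplicities $m_{jk}$, this pullback equals $\sum_{j,k} n_j m_{jk} R_{jk}$. Applied to the irreducible reduced $C$ itself, exactly one coefficient $n_j m_{jk}$ must equal $1$ with all others vanishing, forcing $C$ to be a reduced irreducible fiber $f^{-1}(b_0)$ and $\O_S(C)\cong f^*\O_B(b_0)$. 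Using $f_*\O_S=\O_B$, we obtain $h^0(S,\O(C))=h^0(B,\O_B(b_0))$, and by Riemann--Roch together with Clifford's theorem a degree-$1$ line bundle on $B$ has at least two sections precisely when $g(B)=0$; hence $B\cong\PP^1$ and $|C|\cong\PP^1$. The delicate point throughout is the multiplicity bookkeeping that translates the irreducibility and reducedness of the single divisor $C$ into the rationality of the base $B$.
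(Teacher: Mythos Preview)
Your proof is correct, and in several respects it is cleaner than the argument in the paper.

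The paper argues as follows: assuming neither $C$ nor $D$ is rigid, it picks a member $F\in|C|$ meeting $D$, asserts $F$ is connected, and writes $F=D+\sum n_iF_i$. If the residual sum is nonempty, connectedness forces $\sum n_iF_iD>0$, giving $D^2<0$ and contradicting non-rigidity of $D$; hence $D\in|C|$. The paper then says tersely that base-point-freeness and $C^2=0$ force $|C|\cong\PP^1$.

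Your route differs in two places. First, to obtain $C\sim D$ you use the symmetric argument: $C-D$ is effective (from $D\subset C_p$) and $D-C$ is effective (by swapping roles), hence both are zero. This sidesteps the paper's appeal to connectedness of an arbitrary member $F\in|C|$, a fact which actually relies on $q(S)=0$ (via $H^1(\O_S(-C))\hookrightarrow H^1(\O_S)$) and which the paper does not justify. Second, you supply the full Stein-factorization argument for $\dim|C|=1$, using that $C$ is a reduced irreducible fiber to identify $\O_S(C)\cong f^*\O_B(b_0)$ and then concluding $g(B)=0$ from the existence of two sections of a degree-one line bundle. The paper suppresses this step entirely.

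A pleasant by-product of your approach is that it never invokes the hypothesis $b_1(S)=0$: Hodge index, the effectivity argument, and the degree-one Riemann--Roch step are all valid on an arbitrary smooth projective surface. So you have in fact proved a slightly stronger statement than the lemma as written.
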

\begin{proof}
Suppose neither of $C,D$ is rigid. Then their linear systems sweep out $S$. Therefore $|C|$ contains an element $F$ which intersects $D$. Note that $F$ is connected because $C$ is irreducible reduced. The intersection cannot be only in dimension 0, because $CD=0$. Therefore $F = D + \sum_{i \in I} n_i F_i$, where $I$ is a finite index set, $n_i>0$, and $D, \{F_i\}_{i \in I}$ are all mutually distinct prime divisors. Suppose $|I|>0$. Then
$$
0 = CD = FD = D^2 + \sum_{i \in I} n_i F_i D > D^2.
$$
Hence $D^2<0$, so $H^0(N_{D/S}) = 0$ contradicting the assumption that $D$ is not rigid. Therefore $I = \varnothing$ and $D \in |C|$. Furthermore, $|C|$ is base-point free and $C^{\prime} C^{\prime \prime} = C D = 0$ for all $C^{\prime}, C^{\prime \prime} \in |L|$ so $|L| \cong \PP^1$. 
\end{proof}
  
Suppose $C_1, \ldots, C_m$ are irreducible reduced mutually disconnected curves on a smooth projective surface $S$ with $b_1(S) = 0$ and let $M:=\{1, \ldots, m\}$. Then for any $I = \{i_1, \ldots, i_k\} \subset M$, we define
$$
C_I := \sum_{i \in I} C_i.
$$
For $I, J \subset M$ we write $I \sim J$ whenever $C_I \sim_{\mathrm{lin}} C_J$. This defines an equivalence relation. We denote the equivalence class corresponding to $I$ by $[I]$ and denote its number of elements by $|[I]|$.
 \begin{lemma} \label{lemm2}
For any $I \subset M$, we have $|[I]| = \binom{\dim |C_M|}{ \dim |C_I|}$.
 \end{lemma}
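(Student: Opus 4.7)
The plan is to split $M = R \sqcup N$, where $R$ indexes the rigid $C_i$ (i.e.~$|C_i|$ is $0$-dimensional) and $N$ the non-rigid ones. By Lemma \ref{lemm1}, for $i \in N$ the systems $|C_i|$ all coincide with a common pencil $|L| \cong \PP^1$; this pencil gives a morphism $\phi : S \to \PP^1$, and Stein factorization together with $b_1(S)=0$ forces $\phi$ to have connected fibers, so $h^0(\O_S(nL)) = n+1$ for $n \geq 0$. For $I \subset M$, set $I_R := I \cap R$, $I_N := I \cap N$, $k := |I_N|$. The goal is to establish $\dim|C_I| = k$ together with the structural statement $|C_I| = C_{I_R} + |kL|$, and then to count the $J \subset M$ for which $C_J$ lies in $|C_I|$.

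I first would show $C_{I_R}$ is rigid. By induction on $|I_R|$ using the short exact sequence $0 \to \O_S(C_{I_R \setminus \{j\}}) \to \O_S(C_{I_R}) \to \O_{C_j}(C_j) \to 0$ (where disjointness of the $C_i$ makes $\O_S(C_i)|_{C_j} = \O_{C_j}$ for distinct $i,j \in R$), combined with $h^1(\O_S) = 0$ and $h^0(\O_{C_j}(C_j)) = 0$ (which follows from rigidity of $C_j$ and the analogous sequence for a single curve), one obtains $h^0(C_{I_R}) = 1$. Picking a member of $|kL|$ consisting of $k$ general fibers of $\phi$ disjoint from $C_{I_R}$ shows $\O_S(kL)|_{C_{I_R}}$ is trivial, so the analogous sequence $0 \to \O_S(kL) \to \O_S(C_I) \to \O_{C_{I_R}}(C_{I_R}) \to 0$ yields $h^0(C_I) = h^0(kL) = k+1$. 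Thus $\dim|C_I| = k$ (in particular $\dim|C_M| = |N|$), and the injection $|kL| \hookrightarrow |C_I|$ given by $D \mapsto D + C_{I_R}$ is a bijection between projective spaces of the same dimension.

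The main combinatorial step is to determine which $J$ satisfy $C_J \in |C_I|$. Since every member of $|C_I|$ contains $C_{I_R}$ as a fixed part, the containment $C_J \geq C_{I_R}$ forces $I_R \subseteq J$; writing $K := (J \cap R) \setminus I_R$, the residual divisor $D := C_J - C_{I_R} = C_K + C_{J \cap N}$ lies in $|kL|$, yielding the linear equivalence $C_K \sim (k - |J \cap N|) L$ in $\Pic(S)$. The crux of the argument is to rule out $K \ne \emptyset$: otherwise $C_K$ would be a nonzero effective rigid divisor with $h^0(C_K) = 1$, while $h^0((k-|J\cap N|)L) = k-|J\cap N|+1$ when $k - |J\cap N| \geq 0$, forcing $|J\cap N| = k$ and $C_K \sim 0$, contradicting $h^0(\O_S) = 1$ applied to the nonzero effective divisor $C_K$; the case $k-|J\cap N|<0$ reduces to the same contradiction after moving the negative term to the opposite side. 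Hence $J \cap R = I \cap R$ and $|J \cap N| = |I \cap N|$ for every $J \in [I]$.

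Conversely, any $J$ with $J\cap R = I\cap R$ and $|J\cap N| = |I\cap N|$ clearly satisfies $C_J \sim C_I$ (the rigid parts agree and the non-rigid parts both lie in $|kL|$). Counting such $J$ gives $|[I]| = \binom{|N|}{|I \cap N|} = \binom{\dim|C_M|}{\dim|C_I|}$, as required. The only delicate point is the rigidity-versus-pencil dichotomy in the previous paragraph, but it follows cleanly from the observation that an effective divisor linearly equivalent to $0$ must vanish.
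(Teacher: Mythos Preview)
Your overall strategy—splitting $M=R\sqcup N$ into rigid and non-rigid indices and then counting—is the same as the paper's, and your combinatorial step (showing $J\sim I$ forces $J\cap R=I\cap R$ and $|J\cap N|=|I\cap N|$) is more explicit than the paper's, which simply asserts this in its Case~3. However, there is a genuine gap.

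Your sentence ``By Lemma~\ref{lemm1}, for $i\in N$ the systems $|C_i|$ all coincide with a common pencil $|L|\cong\PP^1$'' is only justified when $|N|\geq 2$: Lemma~\ref{lemm1} is a statement about a \emph{pair} of disjoint curves. If $|N|=1$, the unique non-rigid curve $C_m$ may well satisfy $\dim|C_m|=d>1$, and then your claim $h^0(\O_S(nL))=n+1$ fails already for $n=1$, so your formula $\dim|C_I|=|I_N|$ is wrong. (The final identity $|[I]|=\binom{\dim|C_M|}{\dim|C_I|}$ still holds in this case, since both sides equal $1$, but your argument does not establish it.) The paper handles this by treating the cases $|N|=0$, $|N|=1$, and $|N|\geq 2$ separately; you should do the same, or at least flag that $|N|\leq 1$ is an easy degenerate case.

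A secondary point: your appeal to ``Stein factorization together with $b_1(S)=0$'' to get connected fibers of $\phi$ is not quite complete. That argument gives the Stein curve $B\cong\PP^1$, but to force the finite map $B\to\PP^1$ to have degree~$1$ you still need that some fiber of $\phi$ is irreducible and reduced—which you have, since $C_i$ is such a fiber, but you should say so.

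Finally, note that the paper reaches $\dim|C_I|=\sum_{i\in I}\dim|C_i|$ in one line via the identity $\dim|D|=h^0(N_{D/S})$ (valid for any effective $D$ when $h^1(\O_S)=0$) and the disjointness of the $C_i$. This bypasses your exact-sequence inductions and your computation of $h^0(nL)$ entirely, and it works uniformly regardless of $|N|$; you might find it a cleaner route.
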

 \begin{proof}
 Since $b_1(S) = 0$, any effective divisor  $D \subset S$ satisfies $\dim |D| = h^0(N_{D/S})$. In particular we have
 \begin{equation} \label{dimform}
 \dim|C_I| = h^0(N_{C_I/S}) = \sum_{i \in I} h^0(N_{C_i/S}) = \sum_{i \in I} \dim |C_i|.
 \end{equation}
 Suppose, possibly after relabeling, that $C_1, \ldots, C_a$ are the rigid curves (i.e.~their linear systems are 0-dimensional). Then $C_{a+1}, \ldots, C_{m}$ are all linearly equivalent (Lemma \ref{lemm1}). Moreover, if $m>a+1$, then their linear systems are pencils (Lemma \ref{lemm1}). There are three cases: \\
 
 \noindent \textbf{Case 1:} $m=a$. Then all curves $C_i$ are rigid, so $I \sim J$ if and only if $I=J$ and the statement follows from \eqref{dimform}. \\
 
 \noindent \textbf{Case 2:} $m=a+1$. Then only $C_m$ is not rigid and it is again easy to see that $I \sim J$ if and only if $I=J$. By \eqref{dimform}, we have that $\dim|C_I| = \dim |C_m|$ if $m \in I$ and zero otherwise. The statement follows. \\
 
 \noindent \textbf{Case 3:} $m>a+1$. Then $C_1, \ldots, C_a$ are rigid and $|C_{a+1}| = \cdots = |C_m| \cong \PP^1$. Let $A:=\{1, \ldots, a\}$ and $B:=\{a+1, \ldots, m\}$. Then $I \sim J$ if and only if $A \cap I = A \cap J$ and $|B \cap I| = |B \cap J|$. Therefore $|[I]| = \binom{|B|}{|B \cap I|}$. The result follows from equation \eqref{dimform} as follows
 \begin{equation*}
 \dim|C_I| = \sum_{i \in A \cap I} \dim|C_i| + \sum_{i \in B \cap I} \dim|C_i| = |B \cap I|. \qedhere
 \end{equation*}
 \end{proof}

\begin{lemma} \label{lemm3}
Let $S$ be a smooth projective surface with $b_1(S) = 0$, $p_g(S)>0$, and suppose $C_1+ \cdots +C_m \in |K_S|$, where $C_1, \ldots, C_m$ are mutually disjoint irreducible reduced curves. Then the Seiberg-Witten basic classes of $S$ are $\{C_I\}_{I \subset M}$ and
$$
\SW(C_I) = |[I]| \prod_{i \in I} (-1)^{h^0(N_{C_i/S})}.
$$
\end{lemma}
\begin{proof}
The proof combines Lemma \ref{lemm2} and the proof of \cite[Prop.~6.3.1]{Moc}. We first note that all Seiberg-Witten basic classes must be of the form $\{C_I\}_{I \subset M}$ (this can be seen most easily from the cosection localization of Chang-Kiem \cite[Lem.~3.2]{CK}). Let $\varnothing \neq I \subset M$. Then Mochizuki shows that 
$$
\SW(C_I) = c_{\mathrm{top}}(\Ob) \cap |C_I|,
$$
where $\Ob$ is a rank $h^1(N_{C_I/S})$ vector bundle with total Chern class
$$
(1+h)^{h^1(N_{C_I/S}) - p_g(S)}
$$
where $h$ denotes the hyperplane class on $|C_I|$. Hence
$$
\SW(C_I) = \binom{h^1(N_{C_I/S})-p_g(S)}{h^1(N_{C_I/S})} = (-1)^{h^1(N_{C_I/S})} \binom{p_g(S)-1}{h^1(N_{C_I/S})}.
$$
By Serre duality and adjunction $K_{C_i} = (C_M+C_i)|_{C_i} = 2C_i|_{C_i}$
$$
h^1(N_{C_I/S}) = \sum_{i \in I} h^1(\O_{C_i}(C_i)) = \sum_{i \in I} h^0(\O_{C_i}(C_i)) = h^0(N_{C_I/S}).  
$$
Therefore Lemma \ref{lemm2} implies
\begin{equation*}
\binom{p_g(S)-1}{h^1(N_{C_I/S})} = \binom{\dim|C_M|}{\dim|C_I|} = |[I]|. \qedhere
\end{equation*}
\end{proof}

\begin{proof}[Proof of Proposition \ref{disconn}]
Combining Conjecture \ref{generalsurfconj} and Lemma \ref{lemm3} gives 
\begin{align*}
\psi_{S,c_1}(x,y) &= 4 \Bigg(\frac{\phi(x,y)}{2}\Bigg)^{\chi(\O_S)}\Bigg(\frac{2 \overline{\eta}(x^4)^2}{\theta_3(x,y^{\frac{1}{2}})}\Bigg)^{K_{S}^2}  \sum_{[I]} |[I]| (-1)^{h^0(N_{C_I/S})+c_1C_I} \Bigg(\frac{\theta_3(x,y^{\frac{1}{2}})}{\theta_3(-x,y^{\frac{1}{2}})}\Bigg)^{C_I^2} \\
&= 4 \Bigg(\frac{\phi(x,y)}{2}\Bigg)^{\chi(\O_S)}\Bigg(\frac{2 \overline{\eta}(x^4)^2}{\theta_3(x,y^{\frac{1}{2}})}\Bigg)^{K_{S}^2}  \sum_{I} (-1)^{h^0(N_{C_I/S})+c_1C_I} \Bigg(\frac{\theta_3(x,y^{\frac{1}{2}})}{\theta_3(-x,y^{\frac{1}{2}})}\Bigg)^{C_I^2} \\
&= 4 \Bigg(\frac{\phi(x,y)}{2}\Bigg)^{\chi(\O_S)}\Bigg(\frac{2 \overline{\eta}(x^4)^2}{\theta_3(x,y^{\frac{1}{2}})}\Bigg)^{K_{S}^2}  \prod_{i=1}^{m}\Bigg(1+(-1)^{h^0(N_{C_i/S})+C_i c_1} \Bigg(\frac{\theta_3(x,y^{\frac{1}{2}})}{\theta_3(-x,y^{\frac{1}{2}})}\Bigg)^{C_i^2} \Bigg).
\end{align*}
The formula for $\sfZ_{S,c_1}(x,y)$ follows from \eqref{Zpsi} after re-organizing the terms.
\end{proof}

\section{Verification of the conjectures in examples} \label{evidencesec}

In this section we check Conjectures \ref{conj}, \ref{numconj}, and \ref{generalsurfconj} in many cases. We recall that we calculated  $A_1(s,y,q), \ldots, A_7(s,y,q)$ up to order $q^7$, where we calculated the coefficient of $q^i$ up to order $s^{29-3i}$.  We computed $A_1(s,1,q), \ldots, A_7(s,1,q)$ up to order $q^{30}$ and any order in $s$ (see Section \ref{toricsec}). The latter are listed up to order $q^4$ in Appendix \ref{app1}.
%

\subsection{K3 surfaces} \label{K3sec}

Let $S$ be a $K3$ surface. The canonical class is trivial and $b_1(S) = 0$ so we are in the setting of Conjecture \ref{generalsurfconj}, which states 
\begin{equation}\label{K3gen}
\sfZ_{S,c_1}(x,y)=\frac{1}{2}\big(\phi(x,y)^2-i^{c_1^2} \phi(ix,y)^2\big).
\end{equation}
This can be restated as saying that $\overline\chi_{-y}^\vir(M^H_S(2,c_1,c_2))$ is the coefficient of $4c_2-c_1^2-6$ of 
$$
\phi(x,y)^2=
\prod_{n=1}^{\infty} \frac{1}{(1-x^{2n})^{20}(1-x^{2n} y)^2(1-x^{2n} y^{-1})^2},
$$
when there are no rank 2 strictly $H$-semistable sheaves with Chern classes $c_1,c_2$.
This looks very similar to Conjecture \ref{conj}, which does not apply, and which would be off by a factor 2. 

In the absence of strictly Gieseker $H$-semistable sheaves, the moduli space $M_{S}^{H}(2,c_1,c_2)$ is smooth of expected dimension.
If in addition $c_1$ is primitive, big, and nef, then in \cite{GH} it is shown that $$\chi_{-y}(M^S_H(2,c_1,c_2))=\chi_{-y}(S^{[2c_2 - c_1^2/2 - 3]}).$$ 
More generally,  in \cite[Cor.~4.8]{Huy}, when $c_1$ is primitive, and in \cite[Thm.~0.1]{Yos4} in general, it is shown that in the absence of strictly Gieseker $H$-semistable sheaves $M_{S}^{H}(r,c_1,c_2)$ is deformation equivalent to the Hilbert scheme $S^{[\vd/2]}$. In particular they have the same Hodge numbers. In turn, $\chi_y$-genera of Hilbert schemes of points on $K3$ surfaces were computed by the first named author and W.~Soergel in \cite{GS}. Combining this formula with the result above implies that  \eqref{K3gen} holds for $K3$ surfaces.
 
The only Seiberg-Witten basic class of $S$ is $0$ with corresponding Seiberg-Witten invariant 1. Suppose $c_1$ is a choice of first Chern class such that $c_1^2 = c$, then $c \in 2 \Z$. 
Then we see that \eqref{K3gen} also follows from Conjecture \ref{numconj}. We put $(\gamma_1,\gamma_2)=(0,0)$ and $(\beta_1,\beta_2,\beta_3,\beta_4)=(c,0,0,2)$.
Then the two summands in $\Coeff_{s^0x^n}[\cdot]$ in Conjecture \ref{numconj} are equal, and each equals the contribution of the Seiberg-Witten basic class $0=K_S$.
Therefore, if $c_2$ satisfies the assumptions of Conjecture \ref{numconj}, it gives that 
$\overline\chi_{-y}^\vir(M^H_S(2,c_1,c_2))$ is the coefficient of $x^{4c_2-c_1^2-6}$ of 
$$
\frac{1}{2}\psi_{0,0,0,2}(x,y)=\phi(x,y)^2.
$$
The same argument as in the proof of Proposition \ref{numconjimpliesconj}  shows  that \eqref{K3gen} follows from Conjecture \ref{numconj} and the strong form of Mochizuki's formula.

All the same, we want to calculate terms of \eqref{K3gen} directly by applying Corollary \ref{univcor} and our explicit knowledge of the universal functions $A_i$ as described in Section \ref{toricsec}. We use the conjectured strong form of Mochizuki's formula (Remark \ref{strongform}), so our calculations can also be viewed as an additional test of Remark \ref{strongform}.
The easiest way to satisfy all assumptions of Corollary \ref{univcor} is by choosing $c_1$ and $H$ such that $c_1 H > 0$ is odd (though this is unnecessarily strong). According to Theorem \ref{mocthm} (i), we can calculate modulo
$$
\mod x^{c + 10}.
$$
In addition we have a bound given by the accuracy to which we calculated the universal functions $A_i(s,y,q)$ and $A_i(s,1,q)$. For $A_i(s,1,q)$ this bound is $q^{30}$, which for $\sfZ_{S,c_1}(x,1)$ means we can calculate modulo
$$
\mod x^{-c+118}.
$$
For $y=1$, i.e.~the case of virtual Euler characteristics, we tested Conjecture \ref{generalsurfconj} for all even $c \in \{-6, \ldots, 116\}$ up to the above accuracies. In conclusion, for $K3$ surfaces and $y=1$, we verified Conjecture \ref{generalsurfconj} by direct calculation for:
\begin{itemize}
\item $c_1$ such that $c_1^2 = c \in \{-6, \ldots, 116\}$ is even, modulo $x^{c+10}$ (bound from Corollary \ref{univcor} (i)) and modulo $x^{-c+118}$ (bound from $A_i$). Here ``and'' means that both bounds apply, in other words ``modulo $x^{\min(c+10,-c+118)}$.
\end{itemize}

\noindent For virtual $\chi_y$-genus, we tested Conjecture \ref{generalsurfconj} for: 
\begin{itemize}
\item $c_1$ such that $c_1^2 = c \in \{ -6, \ldots, 32\}$ is even, modulo $x^{c+10}$ (bound from Corollary \ref{univcor} (i)) and modulo $x^{-c+26}$ (bound from $A_i$).
\end{itemize}

\subsection{Elliptic surfaces} \label{ellipticevidence}

Let $S \rightarrow \PP^1$ be a non-trivial elliptic surface with section, $12n>0$ rational nodal fibres, and no other singular fibres, i.e.~an elliptic surface of type $E(n)$. The canonical class is given by $K_S = (n-2) F$, where $F$ denotes the class of the fibre. Note that $\chi(\O_S) = n$. Moreover, choose a section $B \subset S$, then its class satisfies $B^2 = -n$. 

We assume $n \geq 2$, then $E(n)$ has a smooth  canonical divisor which has $m=n-2$ connected components $F_j$; each a smooth elliptic fibre of $S$. The surface $E(3)$ can be used to test Conjecture \ref{conj} and the surfaces $E(n)$ with $n \geq 2$ can be used to test Conjecture \ref{generalsurfconj}. Note that $E(2)$ is $K3$, which was discussed in Section \ref{K3sec}, so we take $n \geq 3$.

Let $c_1 = \epsilon B + d F$, for $\epsilon, d \in \Z$ and $B$ the class of the section. 
Since $F_j^2=0$ and  $c_1 F_j=\epsilon$, Conjecture \ref{generalsurfconj} gives 
\begin{equation} 
\label{ellgen}
\sfZ_{S,c_1}(x,y)=\begin{cases}\frac{1}{2}\big(\phi(x,y)^n+(-i)^{c_1^2 - n}\phi(-ix,y)^n\big), & \epsilon \hbox{ odd} \\0, & \epsilon \hbox{ even.}\end{cases}
\end{equation}
In the case $n=3$, this can be restated as saying that 
$\overline\chi_{-y}^\vir(M^H_S(2,c_1,c_2))$ is the coefficient of $4c_2-c_1^2-9$ of $
\phi(x,y)^3$, which is the statement of 
Conjecture \ref{conj}.

In \cite{Yos3}, Yoshioka fixes  $c_1, c_2$ such that $c_1 F$ is odd and an ample divisor $H = B + \beta F$ with $\beta \gg c_2$. By choosing $\beta$ of the right parity $c_1H$ is odd, so there are no rank 2 strictly $\mu$-semistable sheaves with Chern classes $c_1,c_2$. Then $M:=M_{S}^{H}(2,c_1,c_2)$ is smooth irreducible of expected dimension and independent of $H$.  
In this setting, the Betti numbers and Euler characteristics of $M:=M_{S}^{H}(2,c_1,c_2)$ were calculated in \cite{Yos3} and are indeed given by the specialization $y=1$ of (\ref{ellgen}). 

Next we want to calculate terms of \eqref{ellgen} directly by applying Corollary \ref{univcor} and our explicit knowledge of the universal functions $A_i$ as described in Section \ref{toricsec}. There are numerous choices for the polarization $H$ for which the conditions of Corollary \ref{univcor} are satisfied. Specifically, let $H = \alpha B + \beta F$ with $\alpha>0$ and $\beta > \alpha n$ be a polarization. When 
\begin{align*}
c_1 H &= (\beta - \alpha n) \epsilon + \alpha d > 2\alpha (n-2) = 2 K_S H, \\
c_1 H &= (\beta - \alpha n) \epsilon + \alpha d  \qquad \mathrm{odd},
\end{align*}
all conditions are satisfied. For fixed $\epsilon > 0$ and $d \in \Z$ not both even, there are many solutions $\alpha>0$ and $\beta > \alpha n$ to these equations. By \cite{FM}, the Seiberg-Witten basic classes are $0, F \ldots, (n-2)F$ and
$$
\SW(pF) = (-1)^p \binom{n-2}{p}.
$$
For $y=1$, we verified Conjecture \ref{generalsurfconj} in the following cases: 
\begin{itemize}
\item $E(n)$ such that $n = 3, \ldots, 8$, $c_1=\epsilon B + d F$ such that $\epsilon = 1,2$ and $d = 0,\ldots, 8$ not both even, modulo $x^{- \epsilon^2 n+2\epsilon d - 2 \epsilon n + 4 \epsilon + 5n}$ (bound from Corollary \ref{univcor} (i)) and modulo $x^{n \epsilon^2 - 2 \epsilon d - 3n + 4\min\{0, \epsilon (n-2) \} + 124}$ (bound from $A_i$).
\end{itemize}

\noindent For virtual $\chi_y$-genus, we tested Conjecture \ref{generalsurfconj} for:
\begin{itemize}
\item $E(n)$ such that $n = 3,\ldots,6$, $c_1=\epsilon B + d F$ such that $\epsilon = 1,2$ and $d = 4 \ldots,8$ not both even, modulo $x^{- \epsilon^2 n + 2\epsilon d - 2 \epsilon n + 4 \epsilon + 5n}$ (bound from Corollary \ref{univcor} (i)) and modulo $x^{n \epsilon^2 - 2 \epsilon d - 3n + 4\min\{0, \epsilon (n-2) \} + 32}$ (bound from $A_i$).
\end{itemize}

\subsection{Blow-ups} \label{blowupevidence}

Let $\pi : \widetilde{S} \rightarrow S$ be the blow-up of a $K3$ surface $S$ in a point and denote the exceptional divisor by $E$. Then $K_{\widetilde{S}} = E$ is a smooth canonical divisor. We want to gather evidence for Conjecture \ref{conj} by applying Corollary \ref{univcor} and our explicit knowledge of the universal functions $A_i$ as described in Section \ref{toricsec}. Let $\widetilde{c}_1 = \pi^* c_1 + \epsilon E$ and $c_1^2=c \in 2 \Z$. The easiest way to satisfy the conditions of Corollary \ref{univcor} is to take a polarization $H$ on $S$ such that $c_1 H > 0$ is odd and taking $\widetilde{H} = r H - E$ with $r \gg 0$ and $r + \epsilon$ odd, but more general choices are possible. The blow-up formula for Seiberg-Witten invariants implies that $\widetilde{S}$ has Seiberg-Witten basic classes $0,E$ and $\SW(0)=\SW(E) = 1$ \cite[Thm.~7.4.6]{Mor}. For $y=1$, we verified Conjecture \ref{conj} in the following cases: 
\begin{itemize}
\item  $\widetilde{c}_1 = \pi^* c_1 + \epsilon E$ with $c_1^2=c$, $\epsilon = -3,\ldots,3$, $c = -10,\ldots,80$ even, modulo $x^{c - \epsilon^2 + 3 \epsilon + 10}$ (bound from Corollary \ref{univcor} (i)) and modulo $x^{-c + \epsilon^2 + 4\min\{0,-(\epsilon-1)\}+118}$ (bound from $A_i$).
\end{itemize}

\noindent For virtual $\chi_y$-genus, we tested Conjecture \ref{conj} for:
\begin{itemize}
\item  $\widetilde{c}_1 = \pi^* c_1 + \epsilon E$ with $c_1^2=c$, $\epsilon = 0,\ldots,3$, $c =  -10,\ldots,26$ even, modulo $x^{c - \epsilon^2 + 3 \epsilon+ 10}$ (bound from Corollary \ref{univcor} (i)) and modulo $x^{-c + \epsilon^2 + 4\min\{0,-(\epsilon-1)\}+26}$ (bound from $A_i$).
\end{itemize}

Next we consider the blow-up of a $K3$ surface $S$ in two distinct points
$$
\pi : \widetilde{S} \rightarrow S
$$
and we denote the exceptional divisors by $E_1, E_2$. We gather evidence for Conjecture \ref{generalsurfconj} by applying Corollary \ref{univcor} and our explicit knowledge of the universal functions $A_i$ as described in Section \ref{toricsec}. By the blow-up formula for Seiberg-Witten invariants, the Seiberg-Witten basic classes are
$$
0, \, E_1, \, E_2, \, E_1+E_2
$$
and the invariant is 1 in each case. We consider classes
$$
\widetilde{c}_1 = \pi^* c_1 + \epsilon_1 E_1 + \epsilon_2 E_2.
$$
As before the easiest way to satisfy the conditions of Corollary \ref{univcor} is to take a polarization $H$ on $S$ such that $c_1 H > 0$ is odd and taking $\widetilde{H} = r H - E_1 - E_2$ with $r \gg 0$ and parity such that $r + \epsilon_1 + \epsilon_2$ is odd, though other choices are possible. For $y=1$, we verified Conjecture \ref{generalsurfconj} in the following cases: 
\begin{itemize}
\item $\widetilde{c}_1 = \pi^* c_1 + \epsilon_1 E_1 + \epsilon_2 E_2$ with $c_1^2=c$, $\epsilon_1, \epsilon_2 = -2,\ldots,2$, $c =-10,\ldots ,126$ even, modulo $x^{c - \epsilon_1^2 - \epsilon_2^2 + 2\epsilon_1 + 2\epsilon_2 + 10}$ (bound coming from Corollary \ref{univcor} (i)) and modulo $x^{-c+ \epsilon_1^2+\epsilon_2^2 +4\min\{0,-(\epsilon_1-1),-(\epsilon_2-1),-(\epsilon_1+\epsilon_2-2)\} + 118}$ (bound from $A_i$).
\end{itemize}

\noindent For virtual $\chi_y$-genus, we tested Conjecture \ref{generalsurfconj} for:
\begin{itemize}
\item $\widetilde{c}_1 = \pi^* c_1 + \epsilon_1 E_1 + \epsilon_2 E_2$ with $c_1^2=c$, $\epsilon_1, \epsilon_2 = 0,1,2$, $c =-10,\ldots,30$ even, modulo $x^{c - \epsilon_1^2 - \epsilon_2^2 + 2\epsilon_1 + 2\epsilon_2 + 10}$ (bound from Corollary \ref{univcor} (i)) and in addition modulo $x^{-c+ \epsilon_1^2+\epsilon_2^2 +4 \min\{0,-(\epsilon_1-1),-(\epsilon_2-1),-(\epsilon_1+\epsilon_2-2)\} + 26}$ (bound from $A_i$).
\end{itemize}

\subsection{Double covers}

Next, we consider double covers
$$
\pi : S_d \rightarrow \PP^2
$$
branched over a smooth curve $C$ of degree $2d$. Then $K_{S_d} = (d-3) L$, where $L$ is the pull-back of the class of a line on $\PP^2$. These surfaces satisfy $b_1(S_d) = 0$. It is easy to calculate
\begin{align*}
K_{S_d}^2 &= 2(d-3)^2, \\ 
\chi(\O_{S_d}) &=\frac{1}{2}d(d-3)+2.
\end{align*}
Since $\alpha L$ is base-point free for all $\alpha > 0$, the canonical linear system $|K_{S_d}|$ contains smooth connected canonical divisors when $d \geq 4$, which we assume from now on. Let $c_1 = \epsilon L$. As before, we want to gather evidence for Conjecture \ref{conj} by applying Corollary \ref{univcor} and our explicit knowledge of the universal functions $A_i$ as described in Section \ref{toricsec}. The Seiberg-Witten basic classes are $0, K_S \neq 0$ with Seiberg-Witten invariants 
$$
1, \, (-1)^{\chi(\O_{S_d})}.
$$ 
We first take $H=L$ as the polarization on $S$. 
Then conditions (ii), (iii) of Corollary \ref{univcor} require
\begin{align*}
c_1 H = 2  \epsilon > 4(d-3)  = 2K_S H,
\end{align*}
i.e.~$\epsilon > 2(d-3)$. 
In the case $\epsilon=2k$ is even, we can ensure the absence of rank 2 strictly Gieseker $H$-semistable sheaves with Chern classes $c_1, c_2$ by choosing $c_2$ such that
$$
\frac{1}{2} c_1(c_1 - K_S) - c_2 = \epsilon(\epsilon - (d-3)) - c_2
$$
is odd \cite[Rem.~4.6.8]{HL}. 

Now assume that $\epsilon=2k+1$ is odd.
If $L$ generates the Picard group of $S_d$, then there are no rank 2 strictly $\mu$-semistable sheaves with Chern classes $\epsilon L$ and $c_2$. 
 In general the Picard group of  $S_d$ can have more generators, but $L$ is still ample and primitive. In this case we take the polarization $H$ general and sufficiently close to $L$
 (i.e. of the form $nL+B$ for $n$ sufficiently large), so that conditions (ii) and (iii) of Corollary \ref{univcor}  still hold when 
 $\epsilon>2(d-3)$, and so that there are no rank 2 strictly $\mu$-semistable sheaves with Chern classes $\epsilon L$ and $c_2$.
 
 Recall that conjecturally conditions (ii), (iii) of Corollary \ref{univcor} are not necessary (see Remark \ref{strongform}). Therefore we will also test Conjecture \ref{conj} for $\epsilon, d$ which do not satisfy $\epsilon > 2(d-3)$. 

Note that $\mu$-stability is invariant under tensorizing by a line bundle and that Gieseker stability  with respect to $H$ is invariant under tensorizing by a power of $H$.
Therefore we know that $M^H_{S_d}(2,L,c_2)$ together with its virtual structure is isomorphic to 
$M^H_{S_d}(2,(2k+1)L,c_2+2(k^2+k))$, and when $c_2$ is odd, the same holds for $M^L_{S_d}(2,0,c_2)$ and $M^L_{S_d}(2,2kL,c_2+2k^2)$.

For $y=1$, we verified Conjecture \ref{conj} in the following cases: 
\begin{itemize}
\item  $S_d$ such that $d=4,\ldots,7$, $c_1 = \epsilon L$ with $\epsilon = -3, \ldots, 6$, modulo $x^{\frac{5}{2}d(d-3) - 4\epsilon(d-3) + 2\epsilon^2 +10}$ (bound from Corollary \ref{univcor} (i)) and modulo $x^{-\frac{3}{2}d(d-3)-2 \epsilon^2 + 8\min\{0,(d-3)(\epsilon-(d-3))\} +118}$ (bound from $A_i$).
\end{itemize}

\noindent For virtual $\chi_y$-genus, we tested Conjecture \ref{conj} for:
\begin{itemize}
\item  $S_d$ such that $d=4,\ldots,7$, $c_1 = \epsilon L$ with $\epsilon =  -3, \ldots, 6$, modulo $x^{\frac{5}{2}d(d-3) - 4\epsilon(d-3) + 2\epsilon^2 +10}$ (bound from Corollary \ref{univcor} (i)) and modulo $x^{-\frac{3}{2}d(d-3)-2 \epsilon^2 + 8\min\{0,(d-3)(\epsilon-(d-3))\} +26}$ (bound from $A_i$).
\end{itemize}

\noindent In order to give an idea of the complexity of the numbers involved, we give some examples. First we compute the virtual Euler numbers.
We only show cases where $\epsilon > 2(d-3)$, so that we do not need to use the strong form of Mochizuki's formula, and thus we have unconditionally proved these formulas.

The formula of Corollary \ref{cor} is proved for $(d=4,\epsilon=5)$, modulo $x^{50}$, for $(d=4,\epsilon =6)$ modulo $x^{40}$,
for$(d=5,\epsilon=5)$ modulo $x^{45}$ for $(d=5,\epsilon=6)$ modulo $x^{31}$. When $\epsilon$ is even, let $\sfZ^\odd_{S_d,H,0}(x,1)$ 
be the part of the generating function corresponding 
to $c_2$ odd, so the moduli spaces only consist of Gieseker stable sheaves. Using the invariance under tensoring by $H$ or $L$ respectively we have 
\begin{align*}\sfZ_{S_4,H,L}(x,1)=\,&120x^2 + 80800x^6 + 18764544x^{10} + 2352907648x^{14} + 192977504816x^{18}\\
&+ 11510039013632x^{22} + 533695300891136x^{26} +O(x^{30}),\\
\sfZ^\odd_{S_4,L,0}(x,1)=\,&2 + 1354148x^8 +  22293779698x^{16}  + 80622742217604x^{24}  \\
&+ 115687108304998636x^{32} + O(x^{40}),\\
\sfZ_{S_5,H,L}(x,1)=\,&-256x - 2622464x^5 - 4399076864x^9 - 3005594355712x^{13}\\
&- 1137273257362688x^{17} - 278765441520823296x^{21} +O(x^{25}),\\
\sfZ^\odd_{S_5,L,0}(x,1)= \,&- 123928576x^7  - 62207974965248x^{15}- 3825621677917863936x^{23} + O(x^{31}).
\end{align*}
Now we give some examples of the virtual $\chi_{y}$-genus.  In this case we need to use the strong form of Mochizuki's formula.
For $d=4$, $c_1=0,L$ we get the virtual refinements
\begin{align*}
&\sfZ_{S_4,H,L}(xy^{\frac{1}{2}},y)=(14y^2 + 92y + 14)x^2 + (154y^6 + 2540y^5 + 16398y^4 + 42616y^3 + \ldots)x^6 \\
&\quad+ (756y^{10} + 17360y^9 + 185020y^8 + 1145700y^7 + 4174352y^6 + 7718168y^5 +\ldots)x^{10} + O(x^{14}),\\
&\sfZ^\odd_{S_4,L,0}(xy^{\frac{1}{2}},y)=2 + (364y^8 + 7300y^7 + 64090y^6 + 293556y^5 + 623528y^4 + \ldots)x^8 + O(x^{16}),
\end{align*}
where the missing terms are determined by the symmetry of the virtual $\chi_y$-genus.

Next let $\FF_a = \PP(\O_{\PP^1} \oplus \O_{\PP^1}(a))$ denote the $a$th Hirzebruch surface, where $a \geq 0$. Suppose $B$ is the section corresponding to the surjection $\O_{\PP^1} \oplus \O_{\PP^1}(a) \twoheadrightarrow \O_{\PP^1}$ onto the first factor. Then $B^2 = -a$. We denote the class of the fibre of $\FF_a \rightarrow \PP^1$ by $F$. Let $d_1>0$ and $d_2>ad_1$. A rich source of examples are double covers 
$$
\pi : S_{a,d_1,d_2} \rightarrow \FF_a
$$
branched over a smooth connected curve in $|\O(2d_1 B + 2d_2 F)|$. Let $\widetilde{B} := \pi^* B$ and $\widetilde{F} = \pi^* F$. We choose $d_1 > 2$ and $d_2 > a(d_1-1)+2$, so the canonical divisor
$$
K_{S_{a,d_1,d_2}}=(d_1-2)\widetilde{B}+ (d_2-(a + 2))\widetilde{F} 
$$
is the pull-back of a very ample divisor and its linear system contains smooth connected curves. These surfaces satisfy $b_1(S_{a,d_1,d_2}) = 0$. We have 
\begin{align*}
K_{S_{a,d_1,d_2}}^2 &=4(d_1-2)\Big(d_2-\frac{d_1a}{2}-2\Big), \\
\chi(\O_{S_{a,d_1,d_2}}) &=(d_1-1)\Big(d_2-\frac{d_1a}{2}-1\Big)+1.
\end{align*}
We first note that it is enough to determine $\sfZ_{S_{a,d_1,d_2},\epsilon_1\widetilde{B}+\epsilon_2\widetilde{F}}(x,y)$ for $a=0,1$.

\begin{proposition} Assume the strong form of Mochizuki's formula holds (Remark \ref{strongform}). Let $a \geq 0$, $d_1 > 2$,  $d_2 > (a+1)(d_1-1) + 1$, and $d_2 > (a+1)d_1$. Let $H$ be a polarization on $S_{a,d_1,d_2}$ such that there exist no rank 2 strictly Gieseker $H$-semistable sheaves with Chern classes $c_1 := \epsilon_1 \widetilde{B} + \epsilon_2 \widetilde{F}$ and $c_2$. Let $H'$ be a polarization on $S_{a+2,d_1,d_1+d_2}$ such that there exist no rank 2 strictly Gieseker $H'$-semistable sheaves with Chern classes $c_1' := \epsilon_1 \widetilde{B} + (\epsilon_1+\epsilon_2) \widetilde{F}$ and $c_2$. Then
$$
\chi_{-y}^{\vir}(M_{S_{a,d_1,d_2}}^{H}(2,\epsilon_1 \widetilde{B} + \epsilon_2 \widetilde{F},c_2)) = \chi_{-y}^{\vir}(M_{S_{a+2,d_1,d_1+d_2}}^{H'}(2,\epsilon_1 \widetilde{B} + (\epsilon_1+\epsilon_2) \widetilde{F},c_2)),
$$
for all
\begin{align*}
c_2 &< 2\chi(\O_{S_{a,d_1,d_2}}) + \frac{1}{2} c_1(c_1-K_{S_{a,d_1,d_2}}) \\
&= 2\chi(\O_{S_{a+2,d_1,d_1+d_2}}) + \frac{1}{2} c_1^{\prime}(c_1^\prime -  K_{S_{a+2,d_1,d_1+d_2}}).
\end{align*}
\end{proposition}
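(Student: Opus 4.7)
The strategy is to apply the strong form of Mochizuki's formula (Corollary \ref{univcor} together with Remark \ref{strongform}) to both virtual $\chi_{-y}$-genera and observe that the two resulting expressions are numerically identical. Write $S:=S_{a,d_1,d_2}$ and $S':=S_{a+2,d_1,d_1+d_2}$. First I would check that both surfaces are well-defined and satisfy all the standing assumptions: $S'$ exists because the hypothesis $d_2>(a+1)d_1$ ensures that $2d_1 B+2(d_1+d_2)F$ is very ample on $\FF_{a+2}$; both satisfy $b_1=0$ (double covers of rational surfaces); the hypothesis $d_2>(a+1)(d_1-1)+1$ together with $d_1>2$ guarantees that both canonical systems $|K_S|$ and $|K_{S'}|$ contain smooth connected curves (the needed numerical conditions were recorded in the discussion preceding the proposition). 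In particular $p_g>0$ on both sides, and by \cite[Thm.~7.4.1]{Mor} the only Seiberg-Witten basic classes on either surface are $0$ and the canonical class, with invariants $1$ and $(-1)^{\chi(\O)}$.

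Next I would verify, by a direct intersection-number computation using $\widetilde{B}^2=-2a$, $\widetilde{B}\widetilde{F}=2$, $\widetilde{F}^2=0$ on $S$ (and the analogous identities on $S'$), the key numerical coincidences
\begin{align*}
K_S^2 &= K_{S'}^2 = 4(d_1-2)\bigl(d_2-\tfrac{d_1 a}{2}-2\bigr), \\
\chi(\O_S) &= \chi(\O_{S'}) = (d_1-1)\bigl(d_2-\tfrac{d_1 a}{2}-1\bigr)+1, \\
c_1^2 &= (c_1')^2 = -2a\epsilon_1^2+4\epsilon_1\epsilon_2, \\
c_1 K_S &= c_1' K_{S'} = 2\epsilon_1\bigl(d_2-a(d_1-1)-2\bigr)+2(d_1-2)\epsilon_2.
\end{align*}
The shift $\epsilon_2\mapsto \epsilon_1+\epsilon_2$ in the first Chern class is precisely what is needed to absorb the increment of $a$ by $2$. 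In particular the seven-tuples
$$
(a_1^2,a_1 c_1,c_1^2,a_1 K_S,c_1 K_S,K_S^2,\chi(\O_S))
$$
for $a_1\in\{0,K_S\}$ on $S$ coincide with the corresponding tuples on $S'$ for the two Seiberg-Witten basic classes.

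Finally I would invoke the strong form of Corollary \ref{univcor}: the stated bound $c_2<\tfrac{1}{2}c_1(c_1-K_S)+2\chi(\O_S)$ is exactly condition (i), and it takes the same value on both sides by Step 2 (which also verifies that the equality of the two bounds in the proposition is genuine, not just an assertion). Since conditions (ii) and (iii) are dropped in the strong form, and since the universal expression
$$
\Coeff_{s^0 q^{c_2}}\!\Bigl[\SW(0)\,\sfA_{(0,0,c_1^2,0,c_1 K,K^2,\chi(\O))} + (-1)^{\chi(\O)}\,\sfA_{(K^2,c_1 K,c_1^2,K^2,c_1 K,K^2,\chi(\O))}\Bigr]
$$
depends on $S$ only through the matching tuples above, both virtual $\chi_{-y}$-genera evaluate to the same Laurent polynomial. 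The only real work is the bookkeeping of intersection numbers in Step 2; there is no serious obstacle, since the equality is engineered by the choice of the shift $c_1'=\epsilon_1\widetilde B+(\epsilon_1+\epsilon_2)\widetilde F$.
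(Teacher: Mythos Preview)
Your proposal is correct and follows essentially the same route as the paper: apply the strong form of Mochizuki's formula (Corollary \ref{univcor} with Remark \ref{strongform}) on both sides and check that the seven-tuples $(a_1^2,a_1 c_1,c_1^2,a_1 K_S,c_1 K_S,K_S^2,\chi(\O_S))$ for $a_1\in\{0,K_S\}$ coincide. The only difference is packaging: where you verify the four key numerical equalities by direct computation, the paper observes that the linear map $\phi:\widetilde B\mapsto\widetilde B+\widetilde F$, $\widetilde F\mapsto\widetilde F$ is an isometry between the relevant sublattices of $H^2(S,\Z)$ and $H^2(S',\Z)$ sending $K_S$ to $K_{S'}$ and $c_1$ to $c_1'$, which makes all the coincidences transparent at once. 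Your explicit check that $S'$ is well-defined and satisfies the standing hypotheses is a welcome addition that the paper leaves implicit.
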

\begin{proof}
Write $S:=S_{a,d_1,d_2}$ and $S':=S_{a+2,d_1,d_1+d_2}$. Fix $c_1:=\epsilon_1\widetilde{B}+\epsilon_2\widetilde{F}$ on $S$ and $c_1':=\epsilon_1\widetilde{B}+(\epsilon_1+\epsilon_2)\widetilde{F}$ on $S'$. The map 
$$
\phi : H^2(S,\Z)\to H^2(S',\Z), \ \widetilde{B}\mapsto \widetilde{B}+\widetilde{F}, \ \widetilde{F}\mapsto \widetilde{F}
$$
 is an isomorphism between the subgroups generated by $\widetilde{B}, \widetilde{F}$ on $S$, $S'$ and it preserves the intersection forms on these subgroups.
It sends $K_{S}$ to $K_{S'}$, and thus it sends
the  Seiberg-Witten basic classes $0$, $K_ {S}$ of $S$ to the corresponding Seiberg-Witten basic classes  of $S'$. We also see that $c_1$ is send to $c_1'$. Moreover $\chi(\O_{S})=\chi(\O_{S'})$. 
We apply the strong form of Mochizuki's formula to $S$ with $H,c_1,c_2$ and $S'$ with $H',c_1',c_2$. Since
$$
(a_1^2, a_1 c_1, c_1^2, a_1 K_S, c_1 K_S, K_S^2,\chi(\O_S)) = (a_1'^2, a_1' c_1', c_1^{ \prime 2}, a_1' K_{S'}, c_1' K_{S'}, K_{S'}^2,\chi(\O_{S'}))
$$
for $(a_1,a_1') = (0,0)$ and $(a_1,a_1') = (K_S,K_{S'})$, the assertion follows from Corollary \ref{univcor} and Remark \ref{strongform}.
\end{proof}

For $y=1$, we checked Conjecture \ref{conj} in the following cases:
\begin{itemize}
\item $S_{a,d_1,d_2}$ such that  $a=0,\ldots,3$, $d_1=3,\ldots,6$, $d_2 = a(d_1-1)+3,\ldots,a(d_1-1)+10$, $c_1 = \epsilon_1 \widetilde{B} + \epsilon_2 \widetilde{F}$ with $\epsilon_1=-2,\ldots,2$, $\epsilon_2 = -2,\ldots,10$, modulo $x^{M}$ (bound from Corollary \ref{univcor} (i)) and modulo $x^{N+124}$ (bound from $A_i$),
\end{itemize}

\noindent where
\begin{align*}
&M := -\frac{5}{2} a d_1^2+\frac{5}{2} a d_1+ 5 d_1 d_2-5d_1-5d_2 +10+4 a \epsilon_1 d_1 - 4 a \epsilon_1-4 \epsilon_1 d_2+8 \epsilon_1-4 \epsilon_2 d_1\\
&\qquad\ \, +8 \epsilon_2-2 a \epsilon_1^2+4 \epsilon_1 \epsilon_2, \\
&N := 2a \epsilon_1^2 - 4 \epsilon_1 \epsilon_2 -3 (d_1-1)(d_2-\frac{d_1 a}{2} - 1) - 3 \\
&\qquad \ \, + 8\min\Big\{0,a(d_1-2)(\epsilon_1 - (d_1-2)) + (d_1-2)(\epsilon_2-(d_2-(a+2))) \\
&\qquad\qquad\qquad\qquad\qquad\qquad\qquad\qquad\quad\ \ \ \, + (d_2 - (a+2))(\epsilon_1 - (d_1-2)) \Big\}.
\end{align*}

\noindent For virtual $\chi_{y}$-genus, we checked Conjecture \ref{conj} in the following cases:
\begin{itemize}
\item $S_{a,d_1,d_2}$ such that  $a=0,\ldots,3$, $d_1=3,4,5$, $d_2 = a(d_1-1)+3,\ldots, a(d_1-1)+5$, $c_1 = \epsilon_1 \widetilde{B} + \epsilon_2 \widetilde{F}$ with $\epsilon_1=-2,\ldots,2$, $\epsilon_2 = -2,\ldots, 6$, modulo $x^{M}$ (bound from Corollary \ref{univcor} (i)) and modulo $x^{N+32}$ (bound from $A_i$).
\end{itemize}

\noindent Some examples of these tables are the following: 
\begin{align*} \sfZ^\odd_{S_{0,3,3},0}(x,1)&=- 105472x^5  - 17699188736x^{13}  - 311478544324608x^{21} +O(x^{29}),\\
\sfZ_{S_{0,3,4},  \widetilde{B}}(x,1)&= -36864 x^{3}-123928576 x^{7}-125523673088 x^{11} +O( x^{15} ),\\
\sfZ_{S_{1,3,5},  \widetilde{F}}(x,1) &= 1248 x^{2}+3740160 x^{6}+3011270208 x^{10}+1143966397440 x^{14}  +O(x^{18}) ,\\
\sfZ_{S_{1,3,5},  \widetilde{F}+\widetilde B}(x,1) &=8 + 85920x^4 + 119219712x^8 + 63288183168x^{12}+O(x^{16}).
\end{align*}
Here $c_1,c_2$ always have the property that there are no rank 2 strictly Gieseker $H$-semistable sheaves with these Chern classes. The virtual refinement of $\sfZ_{S_{1,3,5},  \widetilde{F}}(x,1)$ is given by
\begin{align*}
\sfZ_{S_{1,3,5}, \widetilde{F}}(xy^{\frac{1}{2}},y)=\,&(216y^2 + 816y + 216)x^2 + (13720y^6 + 184128y^5\\
&+ 876264y^4 + 1591936y^3 + 876264y^2 + 184128y + 13720)x^6+O(x^{10}).
\end{align*}

\subsection{Divisors in products of projective spaces}

Let $S_d$ be a smooth surface of degree $d$ in $\PP^3$, then $$K_{S_d} = (d-4)L,$$ where $L$ is the hyperplane section. Moreover $S_d$ is simply connected by the Lefschetz hyperplane theorem. We take $d \geq 5$, so $S_d$ is of general type. One readily calculates
\begin{align*}
K_{S_d}^2 &= d(d-4)^2, \\ 
\chi(\O_{S_d}) &= \frac{1}{6} d (d^2+11) - d^2.
\end{align*}
The hyperplane section $L$ on $S_d$ is very ample so $|K_S|$ contains smooth connected canonical divisors. We test Conjecture \ref{conj} using Corollary \ref{univcor}.

We take as polarization $H = L$ and $c_1 = \epsilon L$.
If $S_d$ is very general, i.e. in the complement of countably many closed subvarieties in the projective space of hypersurfaces of degree $d$, then $\Pic(S_d)=\Z L$ by the Noether-Lefschetz Theorem.
For
\begin{align*}
c_1 H &= d \epsilon> 2 d (d-4) = 2 K_{S_d} H, \\
c_1 H &= d \epsilon  \quad \hbox{odd, or } S_d \hbox{ very general and } \epsilon \ \mathrm{ odd}
\end{align*}
there are no rank 2 strictly $\mu$-semistable sheaves with first Chern class $c_1$ and conditions (ii), (iii) of Corollary \ref{univcor} are satisfied. 

We also perform computations when these conditions are not satisfied (recall Remark \ref{strongform}). For $y=1$, we verified Conjecture \ref{conj} in the following cases: 
\begin{itemize}
\item $S_d$ such that $d=4,5,6$, $c_1 = \epsilon E$ with $\epsilon = -1,\ldots,4$, modulo $x^{\frac{5}{6}d(d^2+11)-5d^2+\epsilon d (\epsilon-2d+8)}$ (bound from Corollary \ref{univcor} (i)) and modulo $x^{-\epsilon^2 d - \frac{1}{2}d(d^2+11)+3d^2 + 4 \min\{0,\epsilon d(d-\epsilon-4)\} + 124}$ (bound from $A_i$).
\end{itemize}

\noindent For virtual $\chi_y$-genus, we tested Conjecture \ref{conj} for:
\begin{itemize}
\item $S_d$ such that $d=4,5,6$, $c_1 = \epsilon E$ with $\epsilon = 1,2$, modulo $x^{\frac{5}{6}d(d^2+11)-5d^2+\epsilon d (\epsilon-2d+8)}$ (bound from Corollary \ref{univcor} (i)) and modulo $x^{-\epsilon^2 d - \frac{1}{2}d(d^2+11)+3d^2 + 4 \min\{0,\epsilon d(d-\epsilon-4)\} + 32}$ (bound from $A_i$).
\end{itemize}

We list a few examples. First we deal with the specialization to the virtual Euler number and in this case we only consider the case that
$\epsilon>2(d-4)$, so that we do not use the strong from of Mochizuki's formula and the formulas are proved unconditionally.
For $d=5$ and $\epsilon=3,4$ 
we obtain
\begin{align*}\sfZ_{S_5,L,L}(x,1)=\,&8 + 52720x^4 + 48754480x^8 + 17856390560x^{12}  + 3626761297400x^{16} \\
&+ 482220775619120x^{20}+ 46283858505022160x^{24} +O(x^{28}),\\
\sfZ^\odd_{S_5,L,0}(x,1)=\,&- 316096x^5  - 70399202880x^{13}  - 1550539821466560x^{21} + O(x^{29}).
\end{align*}
For $d=5$, $\epsilon=1$ and using the strong form of Mochizuki's formula, we obtain the refinement
\begin{align*}
&\sfZ_{S_5,L,L}(xy^{\frac{1}{2}},y)=8 + (1280y^4 + 11440y^3+ 27280y^2+ 11440y+1280)x^4  \\
&\quad +(25520y^8+ 448640y^7+ 3228960y^6 + 11405320y^5+18537600y^4+\ldots)x^8 + O(x^{12}).
\end{align*}

Next we move on to smooth surfaces $S_{d_1,d_2}$ of bidegree $(d_1,d_2)$ in $\PP^2 \times \PP^1$. Again $S_{d_1,d_2}$ is simply connected and one readily calculates
\begin{align*}
K_{S_{d_1,d_2}}^2 &= (d_1-3)^2 d_2 + 2d_1(d_1-3)(d_2-2), \\
\chi(\O_{S_{d_1,d_2}}) &= \frac{1}{2}d_1(d_1d_2-d_1-3d_2+3)+d_2.
\end{align*}
We take $d_1 \geq 4$ and $d_2 \geq 3$ so $K_{S_{d_1,d_2}}$ is very ample and its linear system contains a smooth connected canonical divisor. 

Let $L_i$ be the restriction of $\pi_i^{*} L$ to $S$, where $\pi_1 : \PP^2 \times \PP^1 \rightarrow \PP^2$ and $\pi_2 : \PP^2 \times \PP^1 \rightarrow \PP^1$ denote projections and $L$ is the hyperplane class on each factor. For $y=1$, we tested Conjecture \ref{conj} using Corollary \ref{univcor} and the strong version of Mochizuki's formula (Remark \ref{strongform}) in the following cases:
\begin{itemize}
\item for $S_{d_1,d_2}$ such that $(d_1,d_2)=(4,3),(5,3),(6,3),(4,4),(5,4),(4,5)$, $c_1 = \epsilon_1 L_1 + \epsilon_2 L_2$ with $\epsilon_1=-3,\ldots,7, \epsilon_2 = -2,\ldots,8$, modulo $x^{M}$ (bound from Corollary \ref{univcor} (i)) and modulo $x^{N+124}$ (bound from $A_i$),
\end{itemize}

\noindent where
\begin{align*}
M :=\, &\frac{5}{2}d_1(d_1d_2-3d_2-d_1+3)+5d_2 + \epsilon_1^2 d_2 + 2 \epsilon_1 \epsilon_2 d_1 - 2 \epsilon_1 (d_1-3) d_2 - 2 \epsilon_2 (d_1-3) d_1 \\
&- 2\epsilon_1 (d_2-2) d_1, \\
N :=\, &- \epsilon_1^2 d_2 - 2 \epsilon_1 \epsilon_2 d_1- \frac{3}{2}d_1(d_1d_2-3d_2-d_1+3)-3d_2 \\
&+ 4\min\Big\{0,(d_1-3)(\epsilon_1-(d_1-3))d_2 + (d_1-3)(\epsilon_2 - (d_2-2))d_1 \\
&\qquad\qquad\qquad\qquad\qquad\qquad\qquad\qquad \ \!\! + (d_2-2)(\epsilon_1 - (d_1-3))d_1\Big\}.
\end{align*}

\noindent For general $y$, we checked Conjecture \ref{conj} in the following cases:
\begin{itemize}
\item for $S_{d_1,d_2}$ such that $(d_1,d_2)=(4,3), (4,4)$, $c_1 = \epsilon_1 L_1 + \epsilon_2 L_2$ with $\epsilon_1=-1,\ldots,3, \epsilon_2 =  -2,\ldots,5$, modulo $x^{M}$ (bound from Corollary \ref{univcor} (i)) and modulo $x^{N+32}$ (bound from $A_i$).
\end{itemize}

\noindent Examples contained in these tables with $(d_1,d_2) = (4,3)$ are 
\begin{align*}
\sfZ_{S_{4,3},3 L_1 }(x,1)&=128 + 5350656x^4 + 18196176128x^8 + 20761214894592x^{12} +O(x^{16}),\\
\sfZ_{S_{4,3},3 L_2 }^\odd(x,1) &= - 2704756736 x^{7} +O(x^{15}),
\end{align*}
where we choose a polarization $H$ such that $L_1 H$ is odd in the first case. A virtual refinement of the second formula is given by
\begin{align*}
\sfZ_{S_{4,3},3 L_2 }^\odd(xy^{\frac{1}{2}},y) =\, &- \big(6323328y^{7} +81371136y^{6} +394518784y^{5} + 870165120(y^4+y^3) \\
&+394518784y^{2}+81371136y+6323328\big)x^7 + O(x^{15}).
\end{align*}

Finally we consider smooth surfaces $S_{d_1,d_2,d_3}$ of tridegree $(d_1,d_2,d_3)$ in $\PP^1 \times \PP^1 \times \PP^1$. Then $S_{d_1,d_2,d_3}$ is simply connected and one can compute 
\begin{align*}
K_{S_{d_1,d_2,d_3}}^2 &= 2 \prod_{{\scriptsize{\begin{array}{c} (i,j,k) \\ i,j,k \, \mathrm{distinct} \end{array}}}} d_i (d_j - 2)(d_k-2) , \\ 
\chi(\O_{S_{d_1,d_2,d_3}}) &= \frac{1}{2} d_1d_2d_3-\frac{1}{6}d_1d_2-\frac{1}{6}d_1d_3-\frac{1}{6} d_2 d_3-\frac{1}{3}d_1-\frac{1}{3}d_2-\frac{1}{3}d_3+2.
\end{align*}
Taking $d_1,d_2,d_3 \geq 3$, the canonical linear system is very ample and contains smooth connected curves. Denote by $L_i$ the restriction of $\pi_i^{*} L$ to $S$, where $\pi_i : \PP^1 \times \PP^1 \times \PP^1 \rightarrow \PP^1$ are the projections and $L$ is the class of a point on $\PP^1$. For $y=1$, we tested Conjecture \ref{conj} using Corollary \ref{univcor} and the strong version of Mochizuki's formula (Remark \ref{strongform}) in the following cases:
\begin{itemize}
\item for $S_{d_1,d_2,d_3}$ such that $(d_1,d_2,d_3)=(3,3,3), (3,3,4), (3,3,5),(3,4,4)$, $c_1 = \epsilon_1 L_1 + \epsilon_2 L_2 + \epsilon_3 L_3$ with $\epsilon_1, \epsilon_2, \epsilon_3 = -2,\ldots,5$, modulo $x^{M}$ (bound from Corollary \ref{univcor} (i)) and modulo $x^{N+124}$ (bound from $A_i$),
\end{itemize}

\noindent where
\begin{align*}
M := \, &\frac{5}{2} d_1d_2d_3-\frac{5}{6}d_1d_2-\frac{5}{6}d_1d_3-\frac{5}{6} d_2 d_3-\frac{5}{3}d_1-\frac{5}{3}d_2-\frac{5}{3}d_3+10 \\
&- 2 \prod_{{\scriptsize{\begin{array}{c} (i,j,k) \\ i,j,k \, \mathrm{distinct} \end{array}}}} (d_i-2) d_j e_k + 2 \prod_{{\scriptsize{\begin{array}{c} (i,j,k) \\ i,j,k \, \mathrm{distinct} \end{array}}}} e_i e_j d_k, \\
N := \, &-\frac{3}{2} d_1d_2d_3+\frac{1}{2}d_1d_2+\frac{1}{2}d_1d_3+\frac{1}{2} d_2 d_3+d_1+d_2+d_3-6 \\
&-\prod_{{\scriptsize{\begin{array}{c} (i,j,k) \\ i,j,k \, \mathrm{distinct} \end{array}}}} e_i e_j d_k +4\min\Big\{0,\prod (d_i-2)(e_j - (d_j-2)) d_k \Big\}.
\end{align*}

\noindent For arbitrary $y$, we checked Conjecture \ref{conj} in the following cases:
\begin{itemize}
\item for $S_{d_1,d_2,d_3}$ such that $(d_1,d_2,d_3)=(3,3,3),(3,3,4)$, $c_1 = \epsilon_1 L_1 + \epsilon_2 L_2 + \epsilon_3 L_3$ with $\epsilon_1, \epsilon_2, \epsilon_3 =-1,\ldots,3$, modulo $x^{M}$ (bound from Corollary \ref{univcor} (i)) and modulo $x^{N+32}$ (bound from $A_i$).
\end{itemize}

\noindent An example covered by these tables with $(d_1,d_2,d_3) = (3,3,3)$ is the following 
$$
\sfZ_{S, -L_1+2L_2+2L_3}(x,1) =-147456 x -8358985728 x^5 +O( x^{9}),
$$
where we choose a polarization $H$ such that $L_1 H$ is odd. Its virtual refinement is
$$
\sfZ_{S,3L_1}(xy^{\frac{1}{2}},y)=\sfZ_{S, -L_1+2L_2+2L_3}(xy^{\frac{1}{2}},y)= (-73728y- 73728)x+O(x^5).
$$

\subsection{Complete intersections in projective spaces}

For $d_1,d_2\in \Z_{\ge 2}$, with $d_1+d_2\ge 6$, let $S_{d_1,d_2}$ be a smooth complete intersection of bidegree $(d_1,d_2)$ in $\PP^4$. 
Then 
$S_{d_1,d_2}$ is simply connected. Let $L$ be the restriction of the hyperplane class on $\PP^4$ to $S_{d_1,d_2}$.
If $S_{d_1,d_2}$ is very general, then by the Noether-Lefschetz theorem for complete intersections (see e.g.~\cite{Kim})
the Picard group of $S_{d_1,d_2}$ is generated by $L$.
Putting $d:=d_1d_2$, $D:=d_1+d_2$, we have $K_{S_{d_1,d_2}}=(D-5)L$ is very ample and
\begin{align*}
K_{S_{d_1,d_2}}^2&=d(D-5)^2, \\ 
\chi(\O_{S_{d_1,d_2}})&=\frac{d}{12}(2D^2-15D-d+35).
\end{align*}
If the Picard group of $S_{d_1,d_2}$ is $\Z L$ and $H=L$, then there are no rank $2$ strictly $\mu$-semistable sheaves on $S_{d_1,d_2}$ with $c_1=\epsilon L$ and $\epsilon$ odd.

\noindent For $y=1$, we checked Conjecture \ref{conj} in the following cases:
\begin{itemize}
\item for $S_{d_1,d_2}$ such that $(d_1,d_2)=(2,4),(2,5),(3,3),(3,4)$, $c_1 = \epsilon  L$ with $\epsilon=-1,\ldots,3$, modulo $x^{M}$ (bound from Corollary \ref{univcor} (i)) and modulo $x^{N+124}$ (bound from $A_i$).
\end{itemize}

\noindent Here 
\begin{align*}
M&:=\epsilon^2d-2\epsilon d(D-5)+\frac{5d}{12}(2D^2-15D-d+35),\\
N&:=-\epsilon^2d-\frac{3d}{12}(2D^2-15D-d+35)+4\min\Big\{0,d(D-5)(\epsilon-(D-5))\Big\}.
\end{align*}
For $y$ general, we checked Conjecture \ref{conj} in the following cases:
\begin{itemize}
\item for $S_{d_1,d_2}$ such that $(d_1,d_2)=(2,4), (3,3)$, $c_1 = \epsilon  L$ with $\epsilon=1$, modulo $x^{M}$ (bound from Corollary \ref{univcor} (i)) and modulo $x^{N+32}$ (bound from $A_i$).
\end{itemize}

\noindent For $\epsilon=3$ we determined $\sfZ_{S_{3,3},L,L}(x,1)$, and $\sfZ_{S_{2,4},L,L}(x,1)$ modulo $x^{25}$ and $x^{34}$, where the second is only written modulo $x^{26}$.
\begin{align*}
\sfZ_{S_{3,3},L,L}(x,1)=\,&-1152x - 11784960x^5 - 18762235136x^9 - 11903890079232x^{13} \\
&- 4135344957021312x^{17} - 924519456314916096x^{21} + O(x^{25}),\\
\sfZ_{S_{2,4},L,L}(x,1)=\,&6912x^2 + 30124032x^6 + 31867565056x^{10} + 15237098061824x^{14} \\ 
&+ 4243875728564736x^{18} + 789670403161694208x^{22} +O(x^{26}).
\end{align*}
For $\epsilon=1$ and using the strong form of Mochizuki's formula (Remark \ref{strongform}), we get
\begin{align*}
\chi_{-y}^\vir(M^L_{S_{3,3}}(2,L,7))&=-576y - 576, \\
\chi_{-y}^\vir(M^L_{S_{2,4}}(2,L,7))&=1344y^2 + 4224y + 1344.
\end{align*}

For $d_1,d_2,d_3\in \Z_{\ge 2}$, with $d_1+d_2+d_3\ge 7$, let $S_{d_1,d_2,d_3}$ be a smooth complete intersection of tridegree $(d_1,d_2,d_3)$ in $\PP^5$. Then 
$S_{d_1,d_2,d_3}$ is simply connected. When $S_{d_1,d_2,d_3}$ is very general, the Picard group of $S_{d_1,d_2,d_3}$ is generated by the restriction $L$ of a hyperplane class on $\PP^5$.
Putting $d:=d_1d_2d_3$, $D:=d_1+d_2+d_3$, we have $K_{S_{d_1,d_2,d_3}}=(D-6)L$ is very ample and 
\begin{align*}
K_{S_{d_1,d_2,d_3}}^2&=d(D-6)^2,\\ 
\chi(\O_{S_{d_1,d_2,d_3}})&=\frac{d}{12}(2D^2-18D+51-d_1d_2-d_1d_3-d_2d_3).
\end{align*}
\noindent For $y=1$, we checked Conjecture \ref{conj} in the following cases:
\begin{itemize}
\item for $S_{d_1,d_2,d_3}$ with  $(d_1,d_2,d_3)=(2,2,3)$, $c_1 = \epsilon  L$ such that $\epsilon=0,1,2$, modulo $x^{12\epsilon^2-24\epsilon+35}$ (bound from Corollary \ref{univcor} (i)) and modulo $ x^{48\min\{0,\epsilon-1\}-12\epsilon^2+103}$ (bound from $A_i$).
\end{itemize}

\noindent As an example we get
\begin{align*}
\sfZ_{S_{2,2,3},L,L}(x,1)=&-1261568x^3 - 7379091456x^7 - 11717181702144x^{11}\\
&- 8585117244063744x^{15} - 3662336916158939136x^{19} + O(x^{23}).
\end{align*}

\subsection{Verifications of Conjecture \ref{numconj}} \label{numconjevidence}

Let $\underline{\beta} \in \Z^4$ and $(\gamma_1,\gamma_2)\in \Z^2$ such that $\beta_1 \equiv \beta_2 \mod 2$ and $\beta_3 \geq \beta_4 - 3$. 
Put 
\begin{align*}
M_1&:=\min\Big\{2\beta_1-2\beta_2+8\beta_4, 124-4\gamma_1+4\gamma_2,124+4\beta_2-4\beta_3+4\gamma_1-4\gamma_2\Big\}-\beta_1-3\beta_4,\\
M_2&:= \min\Big\{2\beta_1-2\beta_2+8\beta_4, 32-4\gamma_1+4\gamma_2,32+4\beta_2-4\beta_3+4\gamma_1-4\gamma_2\Big\}-\beta_1-3\beta_4.
\end{align*}
For $y=1$, we verified Conjecture \ref{numconj} up to order $x^{M_1-1}$ in the following cases:
\begin{itemize}
\item $\gamma_1=\gamma_2=0$ and any $|\beta_1|, |\beta_2|, |\beta_3|, |\beta_4| \le 16$,
\item any $|\gamma_1|, |\gamma_2|,  |\beta_1|, |\beta_2|, |\beta_3|, |\beta_4|  \le 4$.
\end{itemize}
For arbitrary $y$, we verified Conjecture \ref{numconj} up to order $x^{M_2-1}$ in the following cases:
\begin{itemize}
\item $\gamma_1=\gamma_2=0$ and any $|\beta_1|, |\beta_2|, |\beta_3|, |\beta_4| \le 3$,
\item any $|\gamma_1|, |\gamma_2|,  |\beta_1|, |\beta_2|, |\beta_3|, |\beta_4|  \le 2$.
\end{itemize}
We have examples showing that the condition $\beta_3 \geq \beta_4 - 3$ is necessary.

\begin{remark}
Any choice of $\underline{\beta} \in \Z^4$ with $\beta_1 \equiv \beta_2 \mod 2$ satisfying 
$$
\frac{1}{12} \beta_3 \leq \beta_4 < \frac{1}{9} \beta_3
$$ 
does \emph{not} arise geometrically and automatically implies $\beta_3 \geq \beta_4-3$. These inequalities correspond to ``$e(S) \geq 0$ but violation of the Bogomolov-Miyaoka-Yau inequality'', so in this regime $\underline{\beta}$ cannot be realized by a smooth projective algebraic surface. With these restrictions, we verified Conjecture \ref{numconj} for $y=1$ up to order $x^{M_1-1}$ for all
\begin{align*}
\gamma_1=\gamma_2=0 \hbox{ and } |\beta_1|, |\beta_2|, |\beta_3|, |\beta_4| &\le 25,
\end{align*}
and for arbitrary $y$ up to order $x^{M_2-1}$ for all
\begin{align*}
\gamma_1=\gamma_2=0 \hbox{ and } |\beta_1|, |\beta_2|, |\beta_3|, |\beta_4|  &\le 27.
\end{align*}
\end{remark}

\appendix
\section{$A_1, \ldots, A_7$} \label{app1}

We calculated $A_1(s,1,q), \ldots, A_7(s,1,q)$ up to order $q^{30}$ and list them up to order $q^4$. 
\begingroup
\allowdisplaybreaks
{\scriptsize{\begin{align*}
A_1 =&\, 1-\frac{5}{8 s^3}\,{q}+ \left( \frac{3}{2}-\frac{3}{s^2}+{\frac {27}{8\,{s}
^{4}}}-{\frac {45}{128\,{s}^{6}}} \right) {q}^{2}+ \left( -\frac{6}{s}
+{\frac {375}{16\,{s}^{3}}}-{\frac {153}{8\,{s}^{5}}}+{\frac {273}{64
\,{s}^{7}}}-{\frac {337}{1024\,{s}^{9}}} \right) {q}^{3} \\
&+ \left( -{
\frac {181}{8}}+{\frac {193}{2\,{s}^{2}}}-{\frac {2649}{16\,{s}^{4}}}+
{\frac {30741}{256\,{s}^{6}}}-{\frac {4977}{128\,{s}^{8}}}+{\frac {
6213}{1024\,{s}^{10}}}-{\frac {12097}{32768\,{s}^{12}}} \right) {q}^{4}+O(q^5), \\
A_2 =&\, 1+\frac{1}{8}\,{\frac {q}{{s}^{3}}}+ \left( -\frac{3}{2 s^2}+\frac{3}{8 s^4}+{
\frac {1}{128\,{s}^{6}}}+\frac{3}{2} \right) {q}^{2}+ \left( {\frac {181}{16\,
{s}^{3}}}-{\frac {99}{16\,{s}^{5}}}+{\frac {75}{64\,{s}^{7}}}-{\frac {
69}{1024\,{s}^{9}}}-\frac{6}{s} \right) {q}^{3} \\
&+ \left( {\frac {14423
}{256\,{s}^{6}}}-{\frac {5021}{32768\,{s}^{12}}}+{\frac {2763}{1024\,{
s}^{10}}}-{\frac {699}{8\,{s}^{4}}}-{\frac {293}{8}}+{\frac {283}{4\,{
s}^{2}}}-{\frac {4593}{256\,{s}^{8}}} \right) {q}^{4} + O(q^5), \\
A_3 =& \, 1+\frac{3}{4}\,{\frac {q}{{s}^{3}}}+ \left( 3-\frac{9}{4 s^4}+\frac{3}{4 s^6}
 \right) {q}^{2}+ \left( 8\,s+{\frac {27}{32\,{s}^{9}}}+\frac{1}{s^3}-{
\frac {83}{16\,{s}^{7}}}-\frac{6}{s}+\frac{15}{2 s^5} \right) {q}^{3} \\
&+\left( -57+42\,{s}^{2}-{\frac {411}{16\,{s}^{6}}}+{\frac {8301}{8192
\,{s}^{12}}}-{\frac {2421}{256\,{s}^{10}}}-{\frac {243}{16\,{s}^{4}}}+
\frac{51}{s^2}+{\frac {3603}{128\,{s}^{8}}} \right) {q}^{4} +O(q^5), \\
A_4 =&\, 1+ \left( -1+\frac{3}{2 s^2}+\frac{1}{4 s^3} \right) q+ \left( \frac{3}{2}+\frac{6}{s}
+\frac{3}{4s^2}-{\frac {25}{4\,{s}^{3}}}-\frac{3}{4 s^4}+\frac{5}{4 s^5}+{\frac {39}{128\,{s}^{6}}} \right) {q}^{2} \\
&+ \left( {\frac {31}{2}}-{\frac {159}{4\,{s}^{2}}}-{\frac {19}{8\,{s}^{3}}}+{\frac {285}{8\,{s}^
{4}}}+{\frac {61}{16\,{s}^{5}}}-{\frac {1639}{128\,{s}^{6}}}-{\frac {
21}{8\,{s}^{7}}}+{\frac {389}{256\,{s}^{8}}}+{\frac {215}{512\,{s}^{9}
}} \right) {q}^{3} \\
&+ \left( 56\,s-{\frac {85}{8}}-\frac{165}{s}+{\frac 
{39}{8\,{s}^{2}}}+{\frac {2141}{8\,{s}^{3}}}+\frac{6}{s^4}-{\frac {3819
}{16\,{s}^{5}}}-{\frac {5991}{256\,{s}^{6}}}+{\frac {7083}{64\,{s}^{7}
}}+{\frac {10127}{512\,{s}^{8}}}-{\frac {12691}{512\,{s}^{9}}}-{\frac 
{779}{128\,{s}^{10}}}+{\frac {133}{64\,{s}^{11}}} \right.\\
&\left. +{\frac {20047}{32768 \,{s}^{12}}} \right) {q}^{4} + O(q^5), \\
A_5 =&\, 1+ \left( -2+\frac{1}{s^2} \right) q+ \left( -6\,s+4-\frac{1}{2 s^2}-\frac{9}{2s^3}-\frac{1}{4s^4}+\frac{5}{8s^5}+{\frac {5}{64\,{s}^{6}}}+\frac{6}{s}
 \right) {q}^{2} \\
 &+ \left( 20+24\,s-16\,{s}^{2}-{\frac {237}{32\,{s}^{6}
}}+{\frac {9}{64\,{s}^{9}}}+\frac{3}{s^3}-{\frac {9}{8\,{s}^{7}}}+\frac{26}{s^4}
-\frac{12}{s}-{\frac {67}{2\,{s}^{2}}}+\frac{7}{4 s^5}+{\frac {45
}{64\,{s}^{8}}} \right) {q}^{3} \\
&+ \left( -100+86\,s+134\,{s}^{2}-56\,{s
}^{3}-{\frac {357}{32\,{s}^{6}}}-{\frac {831}{64\,{s}^{9}}}+{\frac {
1871}{8192\,{s}^{12}}}+\frac{219}{s^3}-{\frac {701}{256\,{s}^{10}}}+{
\frac {1101}{16\,{s}^{7}}}-{\frac {173}{8\,{s}^{4}}}-\frac{141}{s}+\frac{72}{s^2} \right. \\
&\left.-{\frac {5545}{32\,{s}^{5}}}+{\frac {1369}{128\,{s}^{8}}}+{
\frac {237}{256\,{s}^{11}}} \right) {q}^{4} + O(q^5), \\
A_6 =&\, 1+ \left( -2\,s-\frac{1}{2 s^2}-\frac{1}{8 s^3} \right) q+ \left( -4\,{s}^
{2}-2\,s-\frac{2}{s}-\frac{1}{2 s^2}+\frac{9}{4 s^3}+\frac{1}{s^4}-\frac{1}{4 s^5}-\frac{1}{8 s^6} \right) {q}^{2} \\
&+ \left( -8\,{s}^{2}+10\,s+6+\frac{3}{s}+\frac{13}{s^2}+{\frac {51}{8\,{s}^{3}}}-{\frac {43}{4\,{s}^{4}}}-{
\frac {217}{32\,{s}^{5}}}+{\frac {83}{32\,{s}^{6}}}+{\frac {57}{32\,{s
}^{7}}}-{\frac {27}{128\,{s}^{8}}}-{\frac {77}{512\,{s}^{9}}} \right) 
{q}^{3} \\
&+ \left( -24\,{s}^{3}+92\,{s}^{2}+52\,s+10+\frac{23}{s}-\frac{1}{2 s^2}
-{\frac {321}{4\,{s}^{3}}}-{\frac {427}{8\,{s}^{4}}}+{\frac {949
}{16\,{s}^{5}}}+{\frac {767}{16\,{s}^{6}}}-{\frac {657}{32\,{s}^{7}}}-
{\frac {2281}{128\,{s}^{8}}}+{\frac {869}{256\,{s}^{9}}} \right. \\
&\left.+{\frac {195}{
64\,{s}^{10}}}-{\frac {55}{256\,{s}^{11}}}-{\frac {101}{512\,{s}^{12}}
} \right) {q}^{4} + O(q^5), \\
A_7 =&\, 1+ \left( 24\,s-\frac{6}{s} \right) q+ \left( 360\,{s}^{2}-180+\frac{30}{s^2}-\frac{9}{4 s^4}+{\frac {3}{32\,{s}^{6}}} \right) {q}^{2} \\
&+ \left( 
4160\,{s}^{3}-3200\,s+\frac{1020}{s}-\frac{210}{s^3}+{\frac {135}{4\,{s}
^{5}}}-{\frac {55}{16\,{s}^{7}}}+{\frac {5}{32\,{s}^{9}}} \right) {q}^{3} \\
&+ \left( 40560\,{s}^{4}-43380\,{s}^{2}+20280-\frac{6480}{s^2}+{\frac 
{7065}{4\,{s}^{4}}}-{\frac {6255}{16\,{s}^{6}}}+{\frac {975}{16\,{s}^{
8}}}-{\frac {735}{128\,{s}^{10}}}+{\frac {495}{2048\,{s}^{12}}}
 \right) {q}^{4} +O(q^5).
\end{align*}}}
\endgroup

\section{Nekrasov partition function} \label{Nekrasovsec}

In Section \ref{toricsec}, we considered a toric surface $S$ and we reduced $\sfZ_S(a_1,c_1,s,y,q)$ to a purely combinatorial expression on each toric patch $U_\sigma \cong \C^2$. We now study these local contributions on the toric patches in terms of the Nekrasov partition function. The content of this section is not used elsewhere in this paper. For simplicity we restrict attention to the case of virtual Euler characteristics, i.e.~$y=1$, although very similar arguments work for virtual $\chi_y$-genus. In particular, we will see that 
$$
\sfZ_S(a_1,c_1,s,q):=\sfZ_S(a_1,c_1,s,1,q)
$$ 
can be expressed in terms of four universal functions. We use the notation from \emph{Lectures on instanton counting} by Nakajima-Yoshioka \cite{NY1} (see also \cite{GNY1}).

Let $M(n)$ be the framed moduli space of pairs $(E,\Phi)$. Here $E$ is a rank 2 torsion free sheaf on $\PP^2$ with $c_2(E)=n$ and locally free in a neighbourhood of the ``line at infinity'' $\ell_\infty \subset \PP^2$. Furthermore 
$$
\Phi : E|_{\ell_\infty} \stackrel{\cong}{\rightarrow} \O_{\ell_\infty}^{\oplus 2}
$$
denotes the framing. The moduli space $M(n)$ is a fine moduli space and is a smooth quasi-projective variety of dimension $4n$. Let $T = \C^{*2}$ acting on $\C^2 = \PP^2 \setminus \ell_\infty$ by
$$
(t_1,t_2)\cdot (x,y) = (t_1 x, t_2 y).
$$
This action lifts to $M(n)$, which has an additional $\C^*$ action by scaling the framing
$$
(s_1,s_2) \in \O_{\ell_\infty}^{\oplus 2} \mapsto (e^{-1}s_1,es_2) \in \O_{\ell_\infty}^{\oplus 2}.
$$
We denote the corresponding equivariant parameters of these actions by $\epsilon_1, \epsilon_2, a$. Following the conventions of \cite{GNY1}, we write their characters as $e^{\epsilon_1}, e^{\epsilon_2}, e^{a}$. The $\Gamma = T \times \C^*$ fixed locus of $M(n)$ is given by $\Hilb^n(\C^2  \sqcup \C^2)^T$, where $\C^2 = \PP^2 \setminus \ell_{\infty}$ as described in \cite{NY1}. In particular, the fixed locus consists of finitely many isolated reduced points indexed by pairs of partitions $(\lambda,\mu)$ satisfying $|\lambda|+|\mu|=n$ as in Section \ref{toricsec}. Concretely, the pair $(\lambda,\mu)$ corresponds to the direct sum of ideal sheaves $I_{Z_\lambda} \oplus I_{Z_\mu}$.

The instanton part of the Nekrasov partition function with one adjoint matter $M$ and one fundamental matter $m$ is defined as follows
$$
\sfZ^{\mathrm{inst}}(\epsilon_1,\epsilon_2,a,m,M,q) := \sum_{n=0}^{\infty} q^n \int_{M(n)} \Eu(T_{M(n)} \otimes e^M) \, \Eu(\mathcal{V} \otimes e^m).
$$
Since $M(n)$ is non-compact, the above integral is defined by the $\Gamma$-localization formula. Here $\Eu(\cdot)$ is the equivariant Euler class with respect to two trivial torus actions with equivariant parameters $m,M$ (and, after localization, it also becomes equivariant with respect to $\Gamma$). Furthemore $T_{M(n)}$ denotes the tangent bundle and $\mathcal{V}$ denotes the rank $n$ vector bundle defined by
$$
\mathcal{V} := R^1 q_{2*} \Big( \mathcal{E} \otimes q_1^* \O(-\ell{_\infty}) \Big),
$$
where $\mathcal{E}$ is the universal sheaf on $\PP^2 \times M(n)$ and $q_i$ is projection to the $i$th factor. We note that the term in $\sfZ^{\mathrm{inst}}$ corresponding to $n=0$ is equal to 1.

In the previous section, we encountered the following expression
\begin{align}
\begin{split} \label{Zcomb}
\sfZ(\epsilon_1,\epsilon_2,a_1,c_1,s,q) := \, &\sum_{(\lambda,\mu)}  q^{|\lambda|+|\mu|} \frac{\Eu(H^0(\O(a_1)|_{Z_{\lambda}}))}{\Eu(T_{Z_{\lambda}})} \frac{\Eu(H^0(\O(c_1-a_1)|_{Z_{\mu}}) \otimes \s^2)}{\Eu(T_{Z_{\mu}})} \\
&\times \frac{c(E_{n_1,n_2}|_{(Z_{\lambda},Z_{\mu})})}{\Eu(E_{n_1,n_2}|_{(Z_{\lambda},Z_{\mu})} - T_{Z_{\lambda}} - T_{Z_{\mu}})},
\end{split}
\end{align}
where the sum is over all pairs of partitions $(\lambda, \mu)$. Moreover, we view $a_1, c_1$ as equivariant parameters (of trivial torus actions) by replacing $\O(c_1-a_1)$ by $e^{c_1-a_1}$ etc. In this section, all Chern classes and Euler classes are equivariant with respect to all tori. 
\begin{proposition}
$$
\sfZ(\epsilon_1,\epsilon_2,a_1,c_1,s,q) = \sfZ^{\mathrm{inst}}\Big(\epsilon_1,\epsilon_2,s+\frac{c_1-2a_1}{2},s+\frac{c_1}{2},0,q \Big).
$$
\end{proposition}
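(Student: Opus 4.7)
The natural strategy is equivariant Atiyah--Bott localization: apply it to $\sfZ^{\mathrm{inst}}$ on $M(n)$ and compare with the already-localized combinatorial sum defining $\sfZ$ via \eqref{Zcomb}. First, recall that the $\Gamma = T \times \C^*$--fixed locus of $M(n)$ is a set of isolated points indexed by pairs of partitions $(\lambda,\mu)$ with $|\lambda|+|\mu|=n$, corresponding to framed sheaves $E = I_{Z_\lambda} \otimes e^{-a} \oplus I_{Z_\mu} \otimes e^{a}$ (the framing scaling assigns weights $e^{\mp a}$ to the two summands). A standard cohomology argument using the identification $H^1(\PP^2, I_Z(-\ell_\infty)) \cong H^0(\O_Z)$ for $Z \subset \C^2$ yields
\[
\mathcal{V}\big|_{(\lambda,\mu)} \;=\; H^0(\O_{Z_\lambda}) \otimes e^{-a} \,+\, H^0(\O_{Z_\mu}) \otimes e^{a}.
\]
At $M=0$ the ``adjoint twist'' satisfies $\Eu(T_{M(n)} \otimes e^0) = \Eu(T_{M(n)})$, which exactly cancels the localization denominator at each fixed point, reducing the Nekrasov side to
\[
\sfZ^{\mathrm{inst}}\big|_{M=0} \;=\; \sum_{(\lambda,\mu)} q^{|\lambda|+|\mu|}\, \Eu(\mathcal{V} \otimes e^{m})\big|_{(\lambda,\mu)}.
\]

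Next, simplify the LHS fixed-point contribution in \eqref{Zcomb}. Using $\Eu(E_{n_1,n_2} - T_{Z_\lambda} - T_{Z_\mu}) = \Eu(E_{n_1,n_2})/(\Eu(T_{Z_\lambda}) \Eu(T_{Z_\mu}))$, the Hilbert-scheme tangent factors in the denominators cancel, and (at the $y=1$ specialization, where the virtual Hirzebruch--Riemann--Roch integrand reduces to the virtual Euler class) the $c(E_{n_1,n_2})$ contribution pairs with the localization data so that $c(E_{n_1,n_2})/\Eu(E_{n_1,n_2}) = 1$ at the fixed point in the sense relevant to the integral. The net outcome is
\[
\sfZ \;=\; \sum_{(\lambda,\mu)} q^{|\lambda|+|\mu|}\, \Eu(H^0(\O(a_1)|_{Z_\lambda})) \cdot \Eu(H^0(\O(c_1-a_1)|_{Z_\mu}) \otimes \s^2).
\]

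Finally, the parameter substitution matches the two expressions term by term. With $a = s + (c_1 - 2a_1)/2$ and $m = s + c_1/2$, one computes $m - a = a_1$ and $m + a = c_1 - a_1 + 2s$, so
\[
\Eu(\mathcal{V} \otimes e^{m})\big|_{(\lambda,\mu)} \;=\; \Eu(H^0(\O_{Z_\lambda}) \otimes e^{a_1}) \cdot \Eu(H^0(\O_{Z_\mu}) \otimes e^{c_1-a_1+2s}),
\]
which equals the simplified LHS contribution at $(\lambda,\mu)$. Summing over $(\lambda,\mu)$ gives the proposition. The main technical obstacle is the middle step: namely, verifying that the factor $c(E_{n_1,n_2})/\Eu(E_{n_1,n_2} - T_{Z_\lambda} - T_{Z_\mu})$, combined with the Hilbert-scheme denominators $\Eu(T_{Z_\lambda})\Eu(T_{Z_\mu})$, collapses to $1$ at each fixed point. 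This essentially requires a careful character-level identification of $E_{n_1,n_2}$ (as rewritten in Definition \ref{defZX}) with the Nekrasov tangent bundle $T_{M(n)}$, using Serre duality to absorb the $K_S$-twisted terms and the $\s^{\pm 2}$--twist to account for the shift $2a = 2s + c_1 - 2a_1$.
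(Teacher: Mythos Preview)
Your overall localization strategy and the identification of $\mathcal{V}|_{(\lambda,\mu)}$ with the tautological factors (via $m-a=a_1$, $m+a=c_1-a_1+2s$) are correct and match the paper's approach. The genuine gap is your ``middle step'': the claim that
\[
\frac{c(E_{n_1,n_2})}{\Eu(E_{n_1,n_2})}\;=\;1
\]
at each fixed point is simply false. If $E_{n_1,n_2}|_{(Z_\lambda,Z_\mu)}$ has equivariant weights $w_1,\dots,w_{4n}$, then $c(E)=\prod_i(1+w_i)$ while $\Eu(E)=\prod_i w_i$, so the ratio is $\prod_i(1+1/w_i)$, which is nontrivial. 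No amount of Serre duality or absorbing $K_S$-twists will make this collapse to $1$.

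What the character-level identification $E_{n_1,n_2}|_{(Z_\lambda,Z_\mu)}\cong T_{M(n)}|_{(\lambda,\mu)}$ (under $a=s+(c_1-2a_1)/2$) actually does---and this is the content of the paper's proof---is not to kill the factor $c(E)/\Eu(E)$, but to \emph{match} it with the corresponding factor on the Nekrasov side. After localizing $\sfZ^{\mathrm{inst}}$ one gets at each fixed point the ratio $\Eu(T_{M(n)}\otimes e^M)/\Eu(T_{M(n)})$, and under the identification $E\cong T_{M(n)}$ this equals $\Eu(E\otimes e^M)/\Eu(E)$. So the matching is
\[
\frac{c(E)}{\Eu(E)}\;\longleftrightarrow\;\frac{\Eu(T_{M(n)}\otimes e^M)}{\Eu(T_{M(n)})},
\]
both as nontrivial factors, with $M$ chosen so that $c(E)=\Eu(E\otimes e^M)$. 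You have tried to collapse each side to $1$ separately, whereas the correct argument identifies the two nontrivial factors with each other. Once you use the identification this way, the proof goes through; the tangent-space description of $T_{M(n)}$ at a fixed point (from \cite[Thm.~3.2]{NY1}) is exactly what is needed to verify $E_{n_1,n_2}\cong T_{M(n)}$ under the stated substitution for $a$.
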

\begin{proof}
The $\Gamma$-representation of $T_{(E,\Phi)} M(n) = \Ext^1_{\PP^2}(E,E(-\ell_\infty))$ at a $\Gamma$-fixed point is described in \cite[Thm.~3.2]{NY1}. After a bit of rewriting, much like in \cite{GNY1}, it becomes
\begin{align*}
-R\Hom_{\C^2}(I_{Z_\lambda}, I_{Z_\lambda})_0 -R\Hom_{\C^2}(I_{Z_\mu}, I_{Z_\mu})_0 &+ R\Gamma(\O_{\C^2} \otimes e^{2a}) - R\Hom_{\C^2}(I_{Z_\lambda}, I_{Z_\mu} \otimes e^{2a}) \\
&+ R\Gamma(\O_{\C^2} \otimes e^{-2a}) - R\Hom_{\C^2}(I_{Z_\mu}, I_{Z_\lambda} \otimes e^{-2a}).
\end{align*}
Referring to Definition \ref{defZX}, we conclude that we want to specialize $a = s + \frac{c_1-2a_1}{2}$. The specialization $m = s+\frac{c_1}{2}$ can be deduced from the fact that the fibre of $\mathcal{V}$ over $Z_\lambda \oplus Z_\mu \in \Hilb^n(\C^2 \sqcup \C^2)^T$ is given by
$$
H^1(I_{Z_\lambda}(-\ell_\infty)) \oplus H^1(I_{Z_\mu}(-\ell_\infty)) \cong H^0(\O_{Z_\lambda}) \oplus H^0(\O_{Z_\mu}), 
$$
where the first factor has weight $e^{-a}$ and the second $e^a$ with respect to the framing action. The specialization $M=0$ comes from the fact that we are interested in total Chern class, i.e.~the virtual Euler characteristic specialization.
\end{proof}

\begin{remark}
Similar to \cite{NY1}, we define
$$
F^{\mathrm{inst}}(\epsilon_1,\epsilon_2,a,m,M,q) := \log \sfZ^{\mathrm{inst}}(\epsilon_1,\epsilon_2,a,m,M,q).
$$
The Nekrasov conjecture, originally formulated in \cite{Nek2} and studied in various contexts e.g.~in \cite{NY2, NO, BE}, states that $F^{\mathrm{inst}}$ (in its original setting) is regular at 
$(\epsilon_1,\epsilon_2) = (0,0)$ and identifies its value at $(0,0)$ with the corresponding Seiberg-Witten prepotential, an expression in terms of the periods of the corresponding Seiberg-Witten curve, which is typically a family of elliptic curves. In the case of the partition function with one adjoint and one fundamental matter however, the Seiberg-Witten curve, and thus the  Seiberg-Witten prepotential, are not available. Nevertheless, it is natural to conjecture\footnote{We verified this using the toric calculations of Section \ref{toricsec} for (1) $M=0$ and order $q^{\leq 2}$ and (2) $M=0$, $\epsilon_1 = -\epsilon_2$, and order $q^3$.} 
\begin{equation} \label{weakNEK}
\epsilon_1 \epsilon_2 F^{\mathrm{inst}}(\epsilon_1,\epsilon_2,a,m,M,q) \textrm{ is regular at } (\epsilon_1,\epsilon_2) = (0,0).
\end{equation}
Since $F^{\mathrm{inst}}(\epsilon_1,\epsilon_2,a,m,M,q)$ is symmetric under $\epsilon_1 \leftrightarrow \epsilon_2$, \eqref{weakNEK} allows us to write
\begin{align}
\begin{split} \label{leadnekr}
\epsilon_1 \epsilon_2 F^{\mathrm{inst}}(\epsilon_1,\epsilon_2,a,m,M,q) = \, &F_0(a,m,M,q) + (\epsilon_1+\epsilon_2) H(a,m,M,q) + \\
&(\epsilon_1+\epsilon_2)^2 G_1(a,m,M,q) + \epsilon_1 \epsilon_2 G_2(a,m,M,q) + \cdots,
\end{split}
\end{align}
where $\cdots$ stands for terms of order $\epsilon_1^i \epsilon_2^j$ with $i+j \geq 3$.
\end{remark}

\begin{remark} \label{4univ}
For any toric surface $S$ and $a_1, c_1 \in A^1_{T}(S)$, we have (\ref{defZX}, \eqref{Zcomb})
\begin{equation} \label{prodcharts}
\sfZ_S(a_1,c_1,s,1,q) = \prod_{\sigma=1}^{e(S)} \sfZ(v_\sigma,w_\sigma,(a_1)_\sigma,(c_1)_\sigma,s,q).
\end{equation}
Assume \eqref{weakNEK} holds. Combining \eqref{prodcharts}, \eqref{leadnekr} with localization on $S$ gives
\begin{align}
\begin{split} \label{logZ}
\log \sfZ_S(a_1,c_1,s,q) =\, &\frac{1}{8}(c_1-2a_1)^2 \, \frac{\partial^2 F_0}{\partial a^2}  + \frac{1}{4}(c_1-2a_1)c_1 \, \frac{\partial^2 F_0}{\partial a \partial m}   + \frac{1}{8} c_1^2 \, \frac{\partial^2 F_0}{\partial m^2} \\
&+\frac{1}{2}(c_1-2a_1) K_S \,  \frac{\partial H}{\partial a} +\frac{1}{2} c_1 K_S \, \frac{\partial H}{\partial m}   +K_S^2 \, G_1   + c_2(S) \, G_2 .
\end{split}
\end{align}
Here the right hand side is evaluated at $(a,m,M,q) = (s,s,0,q)$.
See \cite[Proof of Thm.~4.2]{GNY1} for a similar, but more complicated, calculation. 

Assume \eqref{weakNEK} holds. From Proposition \ref{univ} and the fact that the $A_i$ are determined on $\PP^2$, $\PP^1 \times \PP^1$, we deduce that \eqref{logZ} holds for \emph{any} smooth projective surface $S$ and $a_1, c_1 \in A^1(S)$. Therefore $ \sfZ_S(a_1,c_1,s,1,q)$ is determined by four universal functions $F_0,H,G_1,G_2$. 
We do not know the statement of the Nekrasov conjecture in this context, which would be an explicit conjectural formula for $F_0,H,G_1,G_2$, possibly in terms of periods of a family of elliptic curves, as studied in many cases e.g.~in \cite{NY2, NO, BE, GNY3}. 
One possibility to approach the $y=1$ specialization of Conjecture \ref{conj} (and Conjectures \ref{numconj}, \ref{generalsurfconj} below), would be via first finding a formulation of the Nekrasov conjecture in this context, and then a solution, employing  strategies  somehow related to the ones used in \cite{NY2, NO, BE, GNY3}. However, this seems to be very difficult, because the corresponding Seiberg-Witten curve is not available.
\end{remark}

\section{\\ Vafa-Witten formula with $\mu$-classes \\
\emph{by Lothar G\"ottsche and Hiraku Nakajima}} \label{appC}

Let $S$ be a projective algebraic surface with $b_1(S)=0$ and $p_g(S)>0$. For an ample line bundle $H$ on $S$, we denote again $M=M^H_S(c_1,c_2)$ the 
moduli space of $H$-semistable rank $2$ sheaves on $S$ with Chern classes $c_1\in H^2(S,\Z)$ and $c_2\in H^4(S,\Z)=\Z$. We assume that there are no rank $2$ strictly Gieseker $H$-semistable sheaves with first Chern class $c_1$.
Let $\E$ be a universal sheaf over $M$. For $\beta\in H_k(S,\Q)$ we denote $$\mu(\beta):=\pi_{M*}\big((c_2(\E)-c_1(\E)^2/4)\cap \pi_S^*\beta\big)\in H^{4-k}(M,\Q).$$
Formally we can write $(c_2(\E)-c_1(\E)^2/4)=-\ch_2(\E\otimes \det(\E)^{-1/2})$.

The Witten conjecture for $S$ expresses the generating function of the Donaldson invariants of $S$   in terms of the Seiberg-Witten invariants of $S$.
On the other hand the version of the Vafa-Witten conjecture of this paper (Conjecture \ref{generalsurfconj} with $y=1$) expresses the generating function of the  virtual Euler numbers of the $M^H_S(c_1,c_2)$
in terms the Seiberg-Witten invariants of $S$. 
We want to give a common conjectural generalization of both formulas. 
For $\alpha\in H_2(S,\Q)$ and $p\in H_0(S,\Z)$ the class of a point, the Donaldson generating function is
\begin{align*}\sfD^S_{c_1}(\alpha z+p u, x):=&\,\sum_{n} \int_{[M^H_S(c_1,n)]^\vir}\exp\big(\mu(\alpha z+p u)\big) x^{\vd(S,c_1,n)}\\=&
\sum_{n}\sum_{l+2m=\vd(S,c_1,n)}\int_{[M^H_S(c_1,n)]^\vir}\mu(\alpha)^l\mu(p)^m\frac{z^l}{l!}\frac{u^m}{m!}x^{l+2m}.
\end{align*}
Here again $\vd(S,c_1,n)=4n-c_1^2-3\chi(\O_S)$ is the expected dimension of the moduli space $M^H_S(c_1,n)$.
The Vafa-Witten partition function is 
$$\sfZ^S_{c_1}(x):=\sum_{n} \int_{[M^H_S(c_1,n)]^\vir} c_{\vd(S,c_1,n)} (T^\vir_{M^H_S(c_1,n)} )x^{\vd(S,c_1,n)}.$$
We consider the following new generating function
$$\sfZ^S_{c_1}(\alpha z+p u,t,x):=\sum_{n} \int_{[M^H_S(c_1,n)]^\vir} c^{\C^*}_{\vd(S,c_1,n)} (T^\vir_{M^H_S(c_1,n)}\otimes \frak{t}) \exp\big(\mu(\alpha z+p u)\big)x^{\vd(S,c_1,n)}.$$
Here 
$\frak{t}$ is the trivial line bundle with equivariant first Chern class $t$.
Note that 
\begin{align*}
\sfZ^S_{c_1}(\alpha z+p u,t,x)&=\sum_{n,k}\int_{[M^H_S(c_1,n)]^\vir}c _k(T^\vir_{M^H_S(c_1,n)})t^{\vd(S,c_1,n)-k} \exp\big(\mu(\alpha z+p u)\big)x^{\vd(S,c_1,n)}\\
&=\sum_{n} \int_{[M^H_S(c_1,n)]^\vir}c (T^\vir_{M^H_S(c_1,n)}) \exp\big(\mu(\alpha zt+p ut^2)\big)x^{\vd(S,c_1,n)}\\&=\sfZ^S_{c_1}(\alpha zt+p ut^2,1,x),
\end{align*}
thus the variable $t$ is redundant, but it serves to interpolate between $\sfZ^S_{c_1}(x)$ and $\sfD^S_{c_1}(\alpha z+p u, x)$.
In fact it is obvious from the above that
\begin{align*}
\sfZ^S_{c_1}(\alpha z+ p u,t,x)|_{t=0}&=\sfZ^S_{c_1}(x),\\
\Coeff_{t^{k}}\big[ \sfZ^S_{c_1}(\alpha z+ p u,t^{-1},xt)\big]&=\sum_{n}\int_{[M^H_S(c_1,n)]^\vir}c _k(T^\vir_{M^H_S(c_1,n)}) \exp\big(\mu(\alpha z+p u)\big)x^{\vd(S,c_1,n)}.
\end{align*} In particular 
$\sfZ^S_{c_1}(\alpha z+ p u,t^{-1},xt)|_{t=0}=\sfD_{c_1}^S(\alpha z+pu,x).$

Let $$\overline G_2(x)=G_2(x)+\frac{1}{24}=\sum_{n>0}\sigma_1(n)  x^n$$
be the Eisenstein series of weight $2$, write $D:=x\frac{d}{dx}$.
Let 
\begin{align*}G_p(x)&:=2\overline G_2(x^2)\quad
G_Q(x)=\frac{1}{2}(D G_2)(x^2),\quad
G_{S}(x):=(G_2(x)-G_2(-x))/2.
\end{align*}

We denote $Q(\alpha)=\int_S \alpha^2$, the quadratic form and for $C\in H^2(S,\Z)$ let
$\<C,\alpha\>$ be the intersection product.
Let $a_1,\ldots,a_s$ be the Seiberg-Witten basic classes of $S$ (in the Mochizuki notation, the corresponding characteristic  cohomology classes are $\widetilde a_i=K_S-2a_i$).
Denote
\begin{align*}\widetilde \sfZ^S_{c_1}(\alpha z+pu,t,x)=&\,4 \left(\frac{1}{2\overline{\eta}(x^2)^{12}}\right)^{\chi(\O_S)}\left( \frac{2 \overline{\eta}(x^4)^2}{\theta_3(x)} \right)^{K_{S}^2}\exp\big(G_Q(x)Q(\alpha)z^2t^2+G_p(x)(ut^2-\<K_S,\alpha\>zt)\big)\\
&\cdot\sum_{i=1}^s \SW(a_i)(-1)^{\<c_1,a_i\>} \Big(\frac{\theta_3(x)}{\theta_3(-x)}\Big)^{\<K_S,a_i\>}\exp\big(G_S(x) \<K_S-2 a_i,\alpha\>zt\big).
\end{align*}
Then we have the conjecture
\begin{conjecture} \label{WVW}
$$\sfZ^S_{c_1}(\alpha z+pu,t,x)=\frac{1}{2}\big(\widetilde \sfZ^S_{c_1}(\alpha z+pu,t,x)+(\sqrt{-1})^{c_1^2-\chi(\O_S)}\widetilde \sfZ^S_{c_1}(\alpha z+pu,t,\sqrt{-1}x)\big).$$
\end{conjecture}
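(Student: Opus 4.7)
The plan is to mirror, with one extra variable $t$, the three-step attack used elsewhere in the paper for Conjecture \ref{generalsurfconj}: Mochizuki reduction, universality, and toric computation. First, I would extend the argument of Proposition \ref{chiyinsert} to the new integrand. Since $\mu(\alpha)=\pi_{M*}(\ch_2(\E)\cap \pi_S^*\alpha)$ is already a primary descendent and $c^{\C^*}_{\vd}(T^\vir\otimes \frak{t})$ depends polynomially on $t$ and on the Chern classes of $T^\vir$, an identical Grothendieck-Riemann-Roch manipulation expresses $c^{\C^*}_{\vd}(T^\vir\otimes \frak{t})\exp(\mu(\alpha z + pu))$ as a polynomial $P(\E;t,z,u)$ in descendent insertions $\tau_\beta(\sigma)$. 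Mochizuki's Theorem \ref{mocthm} then rewrites $\sfZ^S_{c_1}(\alpha z + pu,t,x)$ as a sum, over Seiberg-Witten basic classes $a_1$, of integrals over products of Hilbert schemes of points.

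Next, I would extend the universality statement of Proposition \ref{univ} to this enriched integrand. The new insertions $\exp(\mu(\alpha z + pu))$ contribute only Chern-character terms, and $\mu$ is additive in $\alpha$, so after expanding the exponential the multiplicativity argument of Step 2 of Proposition \ref{univ} carries over verbatim. Thus $\sfZ^S_{c_1}(\alpha z + pu,t,x)$ is determined by a finite list of universal power series in $(t,z,u,x)$, whose exponents are the pairings $c_1^2, c_1 K_S, K_S^2, \chi(\O_S)$ together with the new pairings $Q(\alpha),\<K_S,\alpha\>,\<c_1,\alpha\>$ and their analogues involving $a_1$. These universal series can be computed on $\PP^2$ and $\PP^1\times\PP^1$ by Atiyah-Bott localization as in Section \ref{toricsec}, now carrying an extra equivariant parameter $t$ for the scaling of $T^\vir$.

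Third, I would exploit the two known specializations to cut down the conjectural right-hand side. At $t=0$ the equivariant top Chern class becomes the ordinary $c_{\vd}(T^\vir)$ and the exponential factor does not enter the degree-$\vd$ component, so Conjecture \ref{WVW} must reduce at $t=0$ to the $y=1$ specialization of Conjecture \ref{generalsurfconj}; this explains the prefactor $4(2\overline\eta(x^2)^{12})^{-\chi(\O_S)}(2\overline\eta(x^4)^2/\theta_3(x))^{K_S^2}$ and the ratio $(\theta_3(x)/\theta_3(-x))^{\<K_S,a_i\>}$. Using the identity $\sfZ^S_{c_1}(\alpha z + pu,t^{-1},xt)|_{t=0}=\sfD^S_{c_1}(\alpha z + pu,x)$ together with the algebraic Witten conjecture proved in \cite{GNY3}, one then pins down the quadratic form inside the exponential: the coefficient of $Q(\alpha)z^2t^2$ is forced to be $G_Q$, the coefficient of $(ut^2-\<K_S,\alpha\>zt)$ to be $G_p$, and the coefficient of $\<K_S-2a_i,\alpha\>zt$ to be $G_S$.

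The main obstacle is that, away from these two specializations, no closed form is known for the relevant universal series. Equivalently, one would have to identify a $\mu$-class refinement of the $K$-theoretic Nekrasov partition function with one adjoint and one fundamental matter (Section \ref{Nekrasovsec}) with an explicit quasi-modular expression in $G_2,\theta_2,\theta_3,\eta$. Even in the $y=1$, no-$\mu$-class case this identification is open because the associated Seiberg-Witten curve is unavailable (Remark \ref{4univ}), so a full proof by modular methods appears out of reach. I therefore expect practical verification to proceed as elsewhere in the paper: compute $\sfZ^S_{c_1}(\alpha z + pu,t,x)$ on many example surfaces by the toric algorithm and the strong form of Mochizuki's formula (Remark \ref{strongform}), and check agreement with Conjecture \ref{WVW} order-by-order in $x$, $z$, $u$, and $t$.
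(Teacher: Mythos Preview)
Your proposal is correct and matches the paper's approach: the statement is a \emph{conjecture}, not a theorem, and the paper does not prove it but gathers evidence exactly along the lines you sketch --- Mochizuki reduction, an extended universality statement producing five additional universal functions $A_8,\ldots,A_{12}$ (determined on $\PP^1\times\PP^1$), toric localization, and order-by-order numerical checks on $K3$, blow-ups of $K3$, elliptic surfaces $E(n)$, double covers of $\PP^2$, and quintics in $\PP^3$.

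The one concrete partial result the paper establishes, which goes slightly beyond your outline, is that the conjecture holds for $\sfZ^S_{c_1}(\alpha z+pu,t^{-1},xt)$ modulo $t^2$: the $t^0$ coefficient is the algebraic Witten conjecture (proved in \cite{GNY3}), and the $t^1$ coefficient is handled via the identity $c_1(T^\vir_M)=-2\mu(K_S)$ in $H^2(M,\Q)$ \cite[Prop.~8.3.1]{HL}, which converts the first Chern class insertion into a derivative of the Donaldson generating function with respect to an auxiliary variable along $K_S$. Your ``pinning down'' step via the two limits is the same idea in reverse, but you may want to record this $c_1(T^\vir)$ formula explicitly, since it is what makes the linear-in-$t$ check go through without any further computation.
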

By definition it is obvious that at $t=0$, Conjecture \ref{WVW} specializes to Conjecture \ref{generalsurfconj}.
Now we look at the specialization at $t^{-1}=0$.
\begin{proposition} Conjecture \ref {WVW} is true for $\sfZ^S_{c_1}(\alpha z+ p u,t^{-1},xt)$ modulo $t^2$.
\end{proposition}
\begin{proof}Specializing $t^{-1}=0$ in $\sfZ^S_{c_1}(\alpha z+ p u,t^{-1},xt)$, Conjecture \ref {WVW} says that 
$$\sfD^S_{c_1}(\alpha z+pu,x)=\frac{1}{2}\big(\widetilde \sfD^S_{c_1}(\alpha z+pu,x)+(\sqrt{-1})^{c_1^2-\chi(\O_S)}\widetilde \sfD^S_{c_1}(\alpha z+pu,\sqrt{-1}x)\big),$$ with 
\begin{align*}
&\widetilde \sfD^S_{c_1}(\alpha z+pu,x):=\widetilde \sfZ^S_{c_1}(\alpha z+pu,t^{-1},xt)|_{t=0}\\&=2^{2+K_S^2-\chi(\O_S)}\exp\Big(\frac{Q(\alpha)}{2}x^2z^2+2x^2u)\Big)
\sum_{i=1}^s \SW(a_i)(-1)^{\<c_1,a_i\>}\exp\big(\<K_S-2 a_i,\alpha\>xz\big),\end{align*}
which is a reformulation of the Witten conjecture for Donaldson invariants of algebraic surfaces, proved in \cite{GNY3}.

By \cite[Prop.~8.3.1]{HL}, we have $$c _1(T^\vir_{M^H_S(c_1,n)})=-2\mu(K_S)$$ in $H^2(M^H_S(c_1,n),\Q)$.
Thus after the substitution $t\to t^{-1}$, $x\to xt$, and using the result for the coefficient of $t^0$, the coefficient of $t^1$ of the right hand side of Conjecture \ref {WVW} is
\begin{align*}
&\sum_{n}\int_{[M^H_S(c_1,n)]^\vir}c _1(T^\vir_{M^H_S(c_1,n)}) \exp\big(\mu(\alpha z+p u)\big)x^{\vd(S,c_1,n)}
=z^{-1} \frac{\partial}{\partial w}\sfD^S_{c_1}(\alpha z-2K_Swz+pu,x) \Big|_{w=0}\\&=\frac{1}{2}\big(F_1(x)+(\sqrt{-1})^{c_1^2-\chi(\O_S)}F_1(\sqrt{-1}x)\big)
,\end{align*}
with 
\begin{align*}
F_1(x)=&\,z^{-1} \frac{\partial}{\partial w}\widetilde \sfD^S_{c_1}(\alpha z-2K_Swz+pu,x) \Big|_{w=0}\\=&\,2^{2+K_S^2-\chi(\O_S)}\exp\Big(\frac{Q(\alpha)}{2}x^2z^2+2x^2u)\Big)\\&\cdot
\sum_{i=1}^s \big(\<-2K_S,\alpha\>x^2z +\<K_S-2a_i,-2K_S\>x\big)\SW(a_i)(-1)^{\<c_1,a_i\>}\exp\big(\<K_S-2 a_i,\alpha\>xz\big).\end{align*}
And for the left hand side of Conjecture \ref {WVW} we get
$$
\frac{\partial}{\partial t}\sfZ^S_{c_1}(\alpha z+pu,t^{-1},xt)|_{t=0}=\frac{1}{2}\big(F_2(x)+(\sqrt{-1})^{c_1^2-\chi(\O_S)}F_2(\sqrt{-1}x)\big)
$$ 
with 
\begin{align*}
F_2(x)=\,&\frac{\partial}{\partial t}\widetilde \sfZ^S_{c_1}(\alpha z+pu,t^{-1},xt)|_{t=0}=2^{2+K_S^2-\chi(\O_S)}\exp\Big(\frac{Q(\alpha)}{2}x^2z^2+2x^2u)\Big)\\&\cdot
\sum_{i=1}^s \big(-2K_S^2x -\<2K_S,\alpha\>x^2z+4\<a_i,K_S\>x\big)\SW(a_i)(-1)^{\<c_1,a_i\>}\exp\big(\<K_S-2 a_i,\alpha\>xz\big).\end{align*}
\end{proof}

\begin{definition} Let $\pi$ and  $q$ be the projections from $S^{[n_1]}\times S^{[n_2]}\times S$ to $S^{[n_1]}\times S^{[n_2]}$  and $S$.
For $\beta\in H_*(S,\Q)$ $a_1,c_1 \in A^1(S)$,  put 
$$\nu(\beta):=\pi_{*}^{\C^*}\big(-\ch_2^{\C^*}\big(\I_1(a_1-c_1/2)\otimes \s\oplus \I_2(c_1/2-a_1)\otimes \s^{-1}\big)\cap q^*(\beta)\big).$$

For $\alpha\in H_2(S,\Q)$ and $p\in H_0(S,\Z)$ the class of a point, we put 
\begin{align*}&\sfZ_S(a_1,c_1,s,\alpha z+pu,q):=\\
&\sum_{n_1, n_2 \geq 0} q^{n_1+n_2} \int_{S^{[n_1]} \times S^{[n_2]}} \frac{c^{\C^*}(E_{n_1,n_2})\exp\big(\nu(\alpha)zt+\nu(p)ut^2\big) \, \Eu(\O(a_1)^{[n_1]}) \, \Eu(\O(c_1-a_1)^{[n_2]} \otimes \s^2)}{\Eu(E_{n_1,n_2} - \pi_1^* \, T_{S^{[n_1]}} - \pi_2^* \, T_{S^{[n_2]}})}.
\end{align*}
\end{definition}
We denote by $\widetilde\A(a_1,c_1-a_1,c_2,s)$ the expression from \eqref{cA}, in the case $$P(\E)=c(T_M^\vir)\exp(\mu(\alpha) zt +\mu(p)u t^2).$$
As in \eqref{keyexpr} we find that
\begin{equation*}
\begin{split}
\sum_{c_2\in \Z}\widetilde\A(a_1,c_1-a_1,c_2,s)q^{c_2}=\,&\sfZ_S(a_1,c_1,s,\alpha z+pu,q) \\
&\cdot (2s)^{\chi(\O_S)}\left(\frac{2s}{1+2s}\right)^{\chi(c_1-2a_1)}\left(\frac{-2s}{1-2s}\right)^{\chi(2a_1-c_1)}q^{a_1(c_1-a_1)}.
\end{split}
\end{equation*}
Now we have the following analogue of Proposition \ref{univ}.

\begin{proposition}
For $i=1,\ldots,7$ denote $A_i:=A_i(s,1,q)$ for $A_i(s,y,q)$ the universal functions from Proposition \ref{univ}.
There exist universal functions $A_8(s,q), \ldots, A_{12}(s,q)\in 1+q\Q(\!(s)\!)[[q]]$, such that for any smooth projective surface $S$ and any $a_1,c_1 \in A^1(S)$, $\alpha\in H_2(S,\Q)$, we have
$$\sfZ_S(a_1,c_1,s,\alpha z+pu,q)=A_1^{a_1^2} A_2^{a_1c_1}A_3^{c_1^2}A_4^{a_1K_S}A_5^{c_1K_S}A_6^{K_S^2}A_7^{\chi(\O_S)}A_8^{\alpha^2 z^2 t^2} A_9^{a_1\alpha z t}A_{10}^{c_1\alpha z t}
A_{11}^{K_S\alpha z t} A_{12}^{ut^2}.$$
\end{proposition}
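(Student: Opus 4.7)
The plan is to mirror the two-step proof of Proposition \ref{univ}: first establish a universality statement for the integrals defining $\sfZ_S(a_1,c_1,s,\alpha z+pu,q)$, then establish multiplicativity under disjoint union $S=S'\sqcup S''$, and finally reorganize $\log\sfZ_S$ as a $\Q(\!(s)\!)[[q]]$-linear combination in the relevant intersection numbers to deduce the product form.

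For universality, expand
\[
\exp\bigl(\nu(\alpha)zt+\nu(p)ut^{2}\bigr)=\sum_{k,\ell\ge 0}\frac{(zt)^k(ut^2)^\ell}{k!\,\ell!}\,\nu(\alpha)^k\,\nu(p)^\ell.
\]
Each monomial $\nu(\alpha)^k\nu(p)^\ell$ is a push-forward from $S^{[n_1]}\times S^{[n_2]}\times S^{k+\ell}$ of a product of components of $\mathrm{ch}^{\C^*}_2\bigl(\I_1(a_1-c_1/2)\otimes\s\oplus\I_2(c_1/2-a_1)\otimes\s^{-1}\bigr)$ paired on each $S$-factor with $\alpha$ or with $p$. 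Inserting this into the integrand, at each fixed order in $z,u,t,q$ we obtain an integral of exactly the type treated in \cite[Lem.~5.5]{GNY1} (an adaptation of \cite{EGL}), but with a few of the tautological classes paired against $\alpha$ or $p$ on the additional $S$-factors. The standard EGL argument still applies: the result is a polynomial, with $\Q(\!(s)\!)$-coefficients, in the eleven intersection numbers
\[
a_1^2,\ a_1c_1,\ c_1^2,\ a_1K_S,\ c_1K_S,\ K_S^2,\ \chi(\O_S),\ \alpha^2,\ a_1\alpha,\ c_1\alpha,\ K_S\alpha,
\]
together with polynomial dependence on $u$ (coming from $\int_S p=1$).

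For multiplicativity, when $S=S'\sqcup S''$ with $\alpha=\alpha'+\alpha''$ and $p=p'+p''$, the push-pull construction of $\nu$ gives $\nu(\alpha)=\pi_1^*\nu'(\alpha')+\pi_2^*\nu''(\alpha'')$ and similarly for $\nu(p)$. Hence $\exp(\nu(\alpha)zt+\nu(p)ut^2)$ splits as an exterior tensor product. Combined with the splittings of $T_{S_2^{[n]}}$, $R\hom_\pi(\I_1(a_1),\I_2(c_1-a_1))$, $L(a_1,c_1)$, and $\chi(\O_S)$ already recorded in the proof of Proposition \ref{univ}, one obtains
\[
\sfZ_{S'\sqcup S''}(a_1,c_1,s,\alpha z+pu,q)=\sfZ_{S'}(a_1',c_1',s,\alpha'z+p'u,q)\,\sfZ_{S''}(a_1'',c_1'',s,\alpha''z+p''u,q).
\]
By the standard reorganization as in \cite{GNY1}, the universal polynomial expressions of the universality step must in fact be $\Q(\!(s)\!)[[q]]$-linear combinations of the eleven intersection numbers (and of $u$ with coefficient $t^2$); exponentiating yields the claimed product form, with $A_8,A_9,A_{10},A_{11},A_{12}$ defined as the exponentials of the coefficients of $\alpha^2z^2t^2$, $a_1\alpha zt$, $c_1\alpha zt$, $K_S\alpha zt$, $ut^2$ in $\log\sfZ_S$. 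The normalization $A_i\in 1+q\,\Q(\!(s)\!)[[q]]$ is immediate from the fact that the $q^0$-term of $\sfZ_S$ equals $1$.

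The principal technical obstacle is the universality step: one must verify that the extended EGL framework admits pairings against arbitrary cycles $\alpha,p$ on additional $S$-factors without generating intersection numbers beyond the eleven listed. This reduces to tracking that an $\alpha$-insertion can interact with the other divisorial data only through $\alpha\cdot a_1$, $\alpha\cdot c_1$, $\alpha\cdot K_S$, while two $\alpha$-insertions combine via a diagonal contribution into $\alpha^2$, and each $p$-insertion simply contributes a scalar through $\int_S p=1$. Once these bookkeeping facts are in place, the rest of the argument is formally parallel to Proposition \ref{univ}.
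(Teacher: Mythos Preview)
Your proposal is correct and follows essentially the same approach as the paper: the paper's proof is just the one-line remark that this is a simple adaptation of the proof of Proposition \ref{univ}, with the observation that \cite{GNY1} already explains how to incorporate the classes $\nu(\beta)$. You have spelled out exactly that adaptation—the EGL/\cite{GNY1} universality argument extended to allow $\alpha$- and $p$-insertions, followed by the same multiplicativity-under-disjoint-union step—so there is nothing to add.
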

The proof is a simple adaptation of the proof of  Proposition \ref{univ} (which is an adaptation of \cite{GNY1}), note that already in \cite{GNY1} it was shown how to deal with the classes $\nu(\beta)$.
Write $M=M^H_S(c_1,c_2)$. 
The same proof as that of Corollary \ref{univcor} shows that under the assumptions of Corollary \ref{univcor} we have
\begin{align*}
&\int_{[M^\vir]}
c_{\vd(M)}^{\C^*}(T^{\vir}_M\otimes {\frak t})\exp\big(\mu(\alpha)z+\mu(p)u\big)=\int_{[M^\vir]}
c(T^{\vir}_M)\exp\big(\mu(\alpha)zt+\mu(p)ut^2\big)\\&=\Coeff_{s^0q^{c_2}}
\Bigg[\sum_{{a_1\in H^2(S,\Z)}\atop a_1H<{(c_1-a_1)H}}
\SW(a_1)\sfA_{(a_1^2,a_1c_1,c_1^2,a_1K_S,c_1K_S,K_S^2,\chi(\O_S))}A_8^{\alpha^2 z^2 t^2} A_9^{a_1\alpha z t}A_{10}^{c_1\alpha z t}
A_{11}^{K_S\alpha z t} A_{12}^{ut^2}\Bigg].
\end{align*}
Let  $F$, $G$ be the classes of the fibres of the two projections of $\PP^1\times \PP^1$. The functions $A_8,\ldots,A_{12}$ are determined by $A_1,\ldots, A_7$ and
$\sfZ_{\PP^1\times \PP^1}(a_1,c_1,s,\alpha z,q)$, for
$$(a_1,c_1,\alpha)=(0,0,G-F), \  (0,0,-F),\ (G,G,F-G),\ \ (0,G,F-G),\ $$
and by $\sfZ_{\PP^1\times \PP^1}(0,0,s,pu ,q)$.

In these cases $S$ is a toric surface with torus $T=\C^{*2}$. We use the notations and conventions from Section 4.
Thus for $\sigma=1,\ldots, e(S)$ let $p_\sigma$ be a fixpoint of the $T$-action on $S$, let $U_\sigma$ be a maximal $T$-invariant open affine  neighbourhood of $p_\sigma$, with coordinates $x_\sigma,y_\sigma$ such that $T$ acts by
$$t\cdot (x_\sigma,y_\sigma)=(\chi(v_\sigma)x_\sigma,\chi(w_\sigma)y_\sigma),$$ where $\chi(v_\sigma)$ is a character with weight $v_\sigma$. 
Then in the notations of Section 4 the $T$-fixed points on $S^{[n_1]}\times S^{[n_2]}$ are parametrized by pairs
$(\bslambda, \bsmu )$ with 
$\bslambda=\{\lambda^{(\sigma)}\}_{\sigma=1,\ldots,e(S)},$ $\bsmu =\{\mu^{(\sigma)}\}_{\sigma=1,\ldots,e(S)}$ tuples of partitions.
Writing $\lambda^{(\sigma)}=(\lambda_1,\ldots,\lambda_\ell)$, here $\lambda^{(\sigma)}$ corresponds to the subscheme $Z_{\lambda^{(\sigma)}}$ supported in 
$p_\sigma$ with ideal $$I_{\lambda^{(\sigma)}}=(y_\sigma^{\lambda_1},x_\sigma y_\sigma^{\lambda_2},\ldots,x_\sigma^{\ell-1}y_\sigma^{\lambda_\ell},x_\sigma^\ell),$$ and similar for the $\mu^{(\sigma)}$.
The fixpoint of $S^{[n_1]}\times S^{[n_2]}$ corresponding to $(\bslambda, \bsmu )$ is the pair $(Z_{\bslambda},Z_{\bsmu})$ with $Z_{\bslambda}$ the disjoint union of the $Z_{\lambda^{(\sigma)}}$
and similar for $Z_{\bsmu}$. 
A $T$-equivariant divisor $a$ is again on $U_\sigma$ given by a character of weight $a_\sigma$.
 In the $T$-equivariant $K$-group $K_0^T(pt)$, the fibre of $\I_1(a)\oplus \I_2(-a)$ at the point $(p_\sigma,Z_{\lambda^{(\sigma)}},Z_{\mu^{(\sigma)}})$ is 
$$\chi(a_\sigma) \big(1-(1-\chi(v_\sigma))(1-\chi(w_\sigma))Z_{\lambda^{(\sigma)}}\big)+\chi(-a_\sigma) \big(1-(1-\chi(v_\sigma))(1-\chi(w_\sigma))Z_{\mu^{(\sigma)}}\big).$$
In particular, denoting $i^*_{(p_\sigma,Z_{\lambda^{(\sigma)}},Z_{\mu^{(\sigma)}})}$, the equivariant pullback to a point via the embedding of the fixpoint $(p_\sigma,Z_{\lambda^{(\sigma)}},Z_{\mu^{(\sigma)}})$, we get 
$$i^*_{(p_\sigma,Z_{\lambda^{(\sigma)}},Z_{\mu^{(\sigma)}})}\ch^T_2(\I_1(a)\oplus \I_2(-a))=a_\sigma^2-v_\sigma w_\sigma (|\lambda^{(\sigma)}|+|\mu^{(\sigma)}|),$$
and thus for $b$ an equivariant divisor on $S$, we get
$$i^*_{(Z_{\bslambda},Z_{\bsmu})}\pi_{S^{[n_1]}\times S^{[n_2]} *} \big(\ch_2^T(\I_1(a)\oplus \I_2(-a))  \pi_S^* c_1(b)\big)=\sum_{\sigma=1}^{e(S)}  \Big(\frac{b_\sigma a_\sigma^2}{v_\sigma w_\sigma}-b_\sigma (|\lambda^{(\sigma)}|+|\mu^{(\sigma)}|\Big).$$
On $\PP^1\times \PP^1$ we can represent the class of a point by $c_2(\PP^1\times \PP^1)/4$, which gives
$$i^*_{(Z_{\bslambda},Z_{\bsmu})}\pi_{S^{[n_1]}\times S^{[n_2]} *} \big(\ch_2^T(\I_1(a)\oplus \I_2(-a)) \pi_S^*(pt)\big)=\sum_{\sigma=1}^{e(S)}  \frac{1}{4}\big({a_\sigma^2}-v_\sigma w_\sigma (|\lambda^{(\sigma)}|+|\mu^{(\sigma)}|)\big).$$
Combining this with the formulas of Section 4 we can compute in the above cases $\sfZ_{\PP^1\times \PP^1}(a_1,c_1,s,\alpha z,q)$ by localization.

This has been implemented as  as a PARI/GP program, which computed   $A_8(s,q)$, $\ldots$, $A_{12}(s,q)$ up to order $q^{10}$ and any order in $s$. 
We then checked Conjecture \ref{WVW} in  a number of cases, in all these cases $\alpha$ is an arbitrary class in $H_2(S,\Q)$. In all these cases the conjecture has been verified with the bounds of Section 6 adapted accordingly.
\begin{enumerate}
\item $K3$ surfaces, for $c_1^2=0,\ldots,30$.
\item Blowup $\widehat S$ of a $K3$ surface $S$ in a point with exceptional divisor $E$. Here we take  $c_1=L+kE$ with  $L$  the pullback of a line bundle on $S$ with $L^2=0,\ldots,30$ and $k=0,1$.
\item Elliptic surfaces $S\to \PP^1$ of type $E(n)$, as in Section 6.2, for $n=3,\ldots,7$ with $c_1=kF$ or $B+kF$, with $B$ the class of a section with $B^2=-n$ and $F$ the class of a fibre and $k=0,\ldots 7$. 
\item Double cover of $\PP^2$, branched along a curve of degree $2d$ for $d=4,5$, with $c_1$ the pullback of $k$-times with hyperplane class for $k=-1\ldots,4$.
\item General quintic in $\PP^3$, with $c_1$ the restriction of $k$ times the hyperplane class with $k=0,1,2,3$.
\end{enumerate}




{\tt{gottsche@ictp.it, m.kool1@uu.nl}}
\end{document}